\documentclass[11pt]{article}
\usepackage[letterpaper, portrait, margin=1in]{geometry}
\usepackage[T1]{fontenc}
\usepackage[utf8]{inputenc}
\usepackage[export]{adjustbox}
\usepackage{graphicx, subcaption, float}
\usepackage{amsmath, amsthm, amssymb, mathtools, mathrsfs, thmtools, bbm}
\usepackage{gensymb, mdframed, listings, xcolor, soul, indentfirst, verbatim, enumerate, enumitem, comment, longtable, hyperref}
\usepackage{tikz}
\usetikzlibrary{arrows.meta,positioning}

\theoremstyle{plain}
\newtheorem{theorem}{Theorem}[section]  % Numbering by section
\newtheorem{lemma}[theorem]{Lemma}  % Shares numbering with theorem
\newtheorem{definition}[theorem]{Definition} 
\newtheorem{corollary}[theorem]{Corollary} 
\newtheorem{proposition}[theorem]{Proposition} 
\newtheorem{remark}[theorem]{Remark} 
\newtheorem{assumption}{Assumption}

\definecolor{codegreen}{rgb}{0,0.6,0}
\definecolor{codegray}{rgb}{0.5,0.5,0.5}
\definecolor{codepurple}{rgb}{0.58,0,0.82}
\definecolor{backcolour}{rgb}{0.95,0.95,0.92}
\lstdefinestyle{mystyle}{
    backgroundcolor=\color{backcolour},   
    commentstyle=\color{codegreen},
    keywordstyle=\color{magenta},
    numberstyle=\tiny\color{codegray},
    stringstyle=\color{codepurple},
    basicstyle=\ttfamily\footnotesize,
    breakatwhitespace=false,         
    breaklines=true,                 
    captionpos=b,                    
    keepspaces=true,                 
    numbers=left,                    
    numbersep=5pt,                  
    showspaces=false,                
    showstringspaces=false,
    showtabs=false,                  
    tabsize=2
}
\title{Approximation of Singular-Stopping Control Driven by Hawkes Processes via Rescaled MDPs}
\author{Isabel \textsc{AGOSTINO}\footnote{\texttt{iagostino@berkeley.edu}} \quad Thibaut \textsc{MASTROLIA}\footnote{\texttt{mastrolia@berkeley.edu}}\\[0.5em]
UC Berkeley, Department of Industrial Engineering and Operations Research}
\date{\today}

\begin{document}

\maketitle

\begin{abstract}
   We investigate a singular–optimal stopping stochastic control problem driven by self-exciting dynamics governed by a Hawkes process. In the continuous-time setting, we show that the optimization problem reduces to solving a variational partial differential equation  with gradient constraints. We then introduce its discrete-time counterpart, modeled as a Markov Decision Process. We prove that, under an appropriate rescaling procedure, the value function of the discrete-time problem converges to its continuous-time equivalent, implying that the discrete-time optimizers are asymptotically optimal for the continuous-time problem. Finally, we apply these results to an Ornstein-Uhlenbeck stochastic differential equation driven by a Hawkes process with singular control, motivated by optimal power plant investment under cyber threat and we illustrate the theoretical findings through numerical simulations. 
\end{abstract}

\section{Introduction}
\subsection{Cyber Risk Management as a Motivating Framework}

Singular stochastic control emerged in the 1960s with the foundational work of Bather and Chernoff \cite{bather1967sequential}, who introduced a stylized spaceship-approach problem to study optimal fuel management under uncertainty. In their formulation, the controller must continuously balance two competing effects: early interventions, which can be made when the remaining time is large but rely on imprecise information, and late interventions, which benefit from improved information but become increasingly ineffective as the remaining time $T-t$ diminishes. The resulting finite-horizon control problem consists of minimizing a quadratic terminal cost together with a running control cost of the form $(T-t)^{-1}\, d\xi_t$, where $\xi$ denotes the cumulative fuel usage. The control process $\xi$ is modeled as a nondecreasing, bounded-variation process, thereby extending classical stochastic control problems in which $\xi$ is typically assumed to be absolutely continuous with respect to the Lebesgue measure. This seminal example motivated an extensive development of the mathematical theory of singular control; see, among many others, \cite{alfonsi2013capacitary,alvarez2001singular,benevs1980some,cadenillas1994stochastic,dufour2004singular,haussmann1995singularI, haussmann1995singularII,karatzas1983class,karatzas2000finite,karoui1988probabilistic,shreve1988introduction}. Singular control techniques have since been applied in a wide variety of domains. In queueing systems, singular controls model instantaneous adjustments of the workload or queue length through admission control, server-speed modulation, or enforced idleness that pushes the state process; see \cite{harrison1987brownian,iglehart1970multiple,martins1990routing}. In biology and renewable-resource management, such controls arise in optimal harvesting and population regulation problems \cite{hening2019harvesting}. In mathematical finance, they play a central role in models of optimal investment, consumption, and dividend distribution \cite{baldursson1996irreversible,reppen2020optimal,shreve1994optimal}.While in energy economics, singular controls are used to describe irreversible or partially reversible investment decisions for power-plant capacity planning \cite{aid2015explicit,koch2021optimal}. For a comprehensive treatment of singular control, its analytical foundations, and its applications, we refer to the monograph \cite{fleming2006controlled} and the recent Ph.D. thesis \cite{sun2024optimal}.\vspace{1em}

Recalling the examples of the spaceship fuel problem and power-plant investment in energy management, one notes that these sectors have become increasingly exposed to cyberattacks in recent years; see, among others, \cite{hemmati2022identification,stergiopoulos2020cyber,yildiz2024enhancing}. These empirical observations highlight the need to extend existing studies on singular control by incorporating cyber threats into the underlying system dynamics. A recent empirical and theoretical article \cite{baldwin2017contagion} has demonstrated by using real data from the Wannacry attacks that cyberattacks exhibit self-exciting features, and Hawkes processes, as a class of self-exciting counting processes, successfully reproduce the clustering and contagion patterns observed in practice. In recent years, stochastic control problems involving self-exciting dynamics have gained increasing attention due to their capacity to model systems in which the occurrence of an event modifies the likelihood of future events. Such feedback effects are naturally captured by Hawkes processes, whose intensities evolve in response to their past jump activity. Self-exciting dynamics arise in various applications, including finance, insurance, and cyber–physical systems. In the context of power-plant investment under cyber risk, for instance, attacks or failures may temporarily increase vulnerability, leading to clusters of incidents. Optimal management of such systems requires balancing continuous operational adjustments with discrete, possibly abrupt interventions aimed at mitigating risk and maintaining stability. Within this framework, we study a singular-mixed stochastic control problem driven by Hawkes dynamics. The controller acts through both regular (absolutely continuous) and singular (bounded-variation) components, reflecting a combination of gradual and instantaneous decisions. The presence of a self-exciting jump mechanism introduces a non-Markovian dependence structure, which renders the optimization problem analytically challenging. In the continuous-time setting, we show that the problem can be characterized by a variational Hamilton-Jacobi-Bellman equation with gradient constraints, providing a natural extension of classical singular control theory to systems with endogenous jump intensities.

\subsection{State of the art and contribution}
This work contributes to the intersection of singular control and self-exciting stochastic systems. Early studies on stochastic control and optimal stopping for jump-diffusion processes, such as \cite{alvarez2009singular,an2010combined,ceci2004mixed}, established verification theorems and regularity properties of the value function while numerical approximations based on partial differential equation (PDE) discretization schemes have been developed, for instance, in \cite{dumitrescu2021approximation}. Our formulation extends this line of work by incorporating Hawkes-driven jumps, which induce both state-dependent and history-dependent jump intensities. In contrast to classical approaches relying on PDE discretization, we investigate a purely discrete-time control model that is provably equivalent to the continuous-time optimization problem, thereby circumventing the need for direct numerical treatment of the variational PDE.\vspace{0.5em}

Starting with the seminal work of Haussmann and Suo \cite{haussmann1995singularI, haussmann1995singularII}, the theory of singular stochastic control has been deeply connected with optimal stopping problems. This relationship has been further clarified in \cite{guo2008connections, guo2009class,bovo2025variational}, which established analytical links between free-boundary formulations and variational inequalities. Comprehensive treatments of singular control theory and its connections to stochastic calculus and viscosity solutions can be found in the monographs \cite{fleming2006controlled,shreve1988introduction}. These foundational results provide the analytical background for our continuous-time formulation and motivate the study of discrete approximations under self-exciting dynamics.\vspace{0.5em}

The control of systems influenced by Hawkes processes has been investigated in \cite{bensoussan2024stochastic,aubert2025optimal}, where the authors analyzed stochastic control problems with self-exciting jumps, and in \cite{khabou2025markov}, which developed a Markov approximation framework for controlled jump processes. On the discrete-time side, the theory of Markov decision processes (MDPs) is well established, see for example the classical references \cite{howard1972risk, puterman2014markov} and has been widely applied to decision-making, dynamic games, and stochastic control \cite{bauerle2011markov, feinberg2012handbook}. Moreover, MDPs have been successfully employed as approximation schemes for singular stochastic control problems under various structural settings. Convergence and consistency results have been obtained for state-constrained systems \cite{budhiraja2007convergent}, multi-species population models \cite{hening2019harvesting}, scaling limits in population dynamics \cite{jusselin2023scaling}, and switching jump-diffusion models with capital injections in finance \cite{jin2013numerical, tran2016numerical}.\vspace{1em}

The main contributions of this paper are threefold. 

\begin{enumerate}
    \item We propose a continuous and discrete time formulation of Singular-mixed stopping stochastic control problem in continuous time driven by a Hawkes process in a general framework. We derive the associated variational Hamilton-Jacobi-Bellman partial differential equation  with gradient constraints in the continuous time setting. 
\item Discrete-time counterpart modeled as an MDP.  We establish, via a suitable rescaling argument, that the discrete-time value function converges to its continuous-time analog. This convergence result implies that the discrete-time optimal controls are asymptotically optimal for the continuous-time problem. We propose in addition a new discretization of the controlled Hawkes process adapted to our MDP framework.
\item Application to a singular controlled Ornstein-Uhlenbeck process driven by Hawkes jumps motivated by power plant investment under cyber threat. This example illustrates how the general results specialize to a tractable model and demonstrates the accuracy of the discrete-time approximation through numerical experiments. Overall, the paper extends existing Markov approximation techniques to non-Markovian jump environments in systems exhibiting self-excitation.
\end{enumerate}

The structure of this work is the following. In Section \ref{sec:continuous} we introduce the continuous time problem formulation, including the spaces and notations used. Section \ref{sec:hawkes} recall the definition of Hawkes processes with exponential kernels and properties of the intensity process. Section \ref{sec:controlSDE} investigates integrability properties of the solution to a stochastic differential equation driven by a Hawkes process with singular control. Section \ref{sec:controlcontinuous} presents the stopping-singular optimization \eqref{controlpb-continuous}, the variational gradient-constrained PDE \eqref{VPDE} associated to this problem, and verifications theorems, see Theorems \ref{thm:cont_supersolution} and \ref{thm:verif}. Section \ref{sec:bounded} introduces reflection process to contain the state process into a bounded domain, as a preliminary step to the MDP framework and first convergence results in Proposition \ref{prop:VL_to_V}. Section \ref{sec:MDP} presents the discrete time corresponding framework, starting with the discretization of the Hawkes process in Section \ref{sec:hawkesdisc}, then the MDP formulation of the problem in Section \ref{sec:MCdisc}. Convergence results are stated in Section \ref{sec:cv} starting with the convergence of the MDP to the controlled SDE in Section \ref{sec:MDPtoSDE} using time rescaling and tightness properties. The main convergence results are provided in Section \ref{sec:convvalue}. We prove in particular that the value function for the discrete time problem converges to the continuous time value function of the continuous time optimization on bounded domain from the time rescaling, which itself converge to the value function of the continuous time unbounded domain optimization problem. We also show that any optimizers of the discrete time problem provide an $\varepsilon$-optimizer for the continuous time limit problem. Finally, Section \ref{secnumeric} illustrates the results for an Ornstein–Ulhenbeck process driven by Hawkes processes.

\section{The continuous time model}\label{sec:continuous}
In this section, we introduce the continuous time version of the optimization problem we are interested into. The controlled SDE is given by \eqref{SDE:singular} below, while the optimization problem is \eqref{controlpb-continuous}. 
\subsection{Spaces and notations} 
We follow the framework introduced in \cite{hillairet2024chaotic,khabou2025markov} in the context of marked processes combined with a continuous diffusion process. 
In all this work, $T>0$ denotes a future horizon fixed and we denote by $(\Omega,\mathcal F,\mathbb P)$ a probability space, where $\Omega=\Omega^c\times \Omega^\Pi$ is composed of the Wiener space $\Omega^c:=\mathcal C([0,T];\mathbb R)$ and the space of jump configurations, $\Omega^\Pi$ defined by:
$$\Omega^\Pi = \Bigg\{ \omega^\Pi = \sum_{i=1}^n \delta_{y_i}, \ y_i=(t_i, \theta_i, z_i)\in[0,T]\times\mathbb{R}_+\times\mathbb{R}_+, \ i = 1,\dots, n, \ 0 = t_0<T_1<\cdots <t_n=T  \Bigg\}.$$
We assume that this space is endowed with a Brownian motion $W$. Let $\omega:=(\omega^c,\omega^\Pi)\in \Omega$. We have $W_t(\omega)=\omega^c_t$. We denote by $\mathbb P^\Pi$ the Poisson measure under which the counting process $\Pi$ defined by \[\Pi([0,t]\times [0,\theta]\times (-\infty,z]):=\omega^\Pi([0,t]\times [0,\theta]\times [0,z]),\; (t,\theta,z)\in [0,T]\times \mathbb R_+\times \mathbb R_+,\]
is a homogeneous Poisson process with intensity measure $dtd\theta \text{m}(dz)$ and where $\text{m}(dz)$ is a probability measure on $\{\mathbb{R}_+, \mathcal{B}(\mathbb{R}_+)\}$ with  support in a compact subset $\mathfrak P\subsetneq \mathbb R_+$ and $\text{m}(\{0\}) = 0$.
We assume moreover that $\int_{\mathbb R}|z|\text{m}(dz)<\infty$. We recall that the natural filtration associated with $(W,\Pi)$ is given by
\[\mathcal F_t:=\sigma(W_t)\vee \mathcal F_t^\Pi,\quad \mathcal F_t^\Pi:=\sigma\Big(\Pi\big(\mathcal B([0,t])\times\mathcal B(\mathbb R_+)\times \mathcal B(\mathbb R_+)\big)\Big).\]

We denote by $\mathcal{T} _{[0,T]}$ the set of $\mathbb F$-stopping time on $[0,T]$, where $\mathbb F:=(\mathcal F_t)_{t\in [0,T]}$. We set $\mathcal D[0,\infty)$ the space of c\`adl\`ag functions that are defined on $[0,\infty)$ and take values in $\mathbb R$, and $\mathcal I[0,\infty)$ (resp. $\mathcal {BV}[0,\infty)$) its restriction to nondecreasing (resp. bounded variations) functions.\\

Note that we view $\mathrm m$ as a random element taking values in the space $\mathcal M(\mathbb R_+)$
of finite Borel measures on $\mathbb R_+$, endowed with the topology of weak convergence (or, equivalently, the topology induced by bounded continuous test functions).

\subsection{Hawkes process and exponential kernels}\label{sec:hawkes}

\subsubsection{Hawkes' formulation}
First developed by Hawkes in 1971 \cite{hawkes1971spectra}, a Hawkes process is a self-exciting time-inhomogeneous counting process with a predictable intensity.
That is, the intensity $\lambda_t$, can be determined from $\mathcal{F}_{t-}$.Hawkes processes are traditionally used to model the clustering of earthquakes and aftershocks in seismology. The key feature is that the occurrence of an event increases the likelihood of future events in the near term, creating temporal dependence. In the past few decades, Hawkes processes have found broader applications in finance (modeling trade arrivals and price jumps), neuroscience, social networks, and more recently in cybersecurity and risk modeling, where they capture the clustering and contagion effects of cyberattacks.
Here we recall the concept of a marked Hawkes process and its counting process. The version presented below is a special case of the
generalized marked Hawkes process introduced in \cite{bielecki2022construction}. Let $(\Omega,\mathcal{F},\mathbb{P})$  be a probability space and consider a marked point process $N$ with  mark space $(\mathbb{R}_+,\mathcal{B}(\mathbb{R}_+ ))$ that is, a sequence of random elements $
((T_n, Z_n))_{n \geq 1},$
where $(T_n)_n$ and $(Z_n)_n$ are sequences of non-decreasing non-negative random variables respectively. We associate with the process $N$ an integer-valued
random measure, also denoted by
$N$ and defined as
\begin{equation*}\label{eq:NH-G}
N(dt,dz) := \sum_{n \geq 1 } \delta_{(T_n, Z_n)} (dt, dz) \mathbf 1_{\{T_n < \infty\}}, \text{ so that }
N((0,t],A)=\sum_{n\geq 1} \mathbf 1_{\{ T_n \leq t,\, Z_n \in A \}},\; A\in\mathcal{B}(\mathbb{R}_+),
\end{equation*}
where $\delta$ denotes the Dirac measure on $\mathbb R_+\times \mathbb R_+$.
The corresponding Hawkes kernel is given as
\begin{equation*}\label{eq:kappa-G}
\psi(t,A)=\left (\lambda^\infty_t +\int_{(0, t)\times \mathbb{R}_+ } \phi(t-s,z)N (ds,dz)\right )\mathbb{Q}(A),
\end{equation*}
for $t\geq 0$ and $A\in \mathcal{B}(\mathbb{R}_+ ) $, where $\mathbb{Q}$ is a probability measure on $(\mathbb{R}_+,\mathcal{B}(\mathbb{R}_+ ))$,
$\eta$ is a positive and predictable process, and $\phi^\circ(\cdot,\cdot)$ is a non-negative, bounded measurable function. 
\begin{remark}\label{rem:exp}
    If $\phi(t)=\alpha e^{-\beta t}$ for any $t>0$ then the couple $(N_t,\lambda_t)$ is a
Markov process and in particular $\lambda$ is solution to the following SDE
\[
d\lambda_t = \beta\big(\lambda_{\infty}-\lambda_t\big)\,dt + \alpha\, dN_t,
\] where we use the abuse of notation $N_t=\int_{(0,t) \times \mathbb{R}_+ } \phi(t-s,z)N (ds,dz)$ with solution given by

\[
\lambda_t =e^{-\alpha t}\lambda_0
+ \bigl(1 - e^{-\alpha t}\bigr)\lambda_\infty
+ \beta \int_{0}^{t} e^{-\alpha (t-s)} \, dN_s .
\]
\end{remark}

In this paper, we consider a Hawkes process with an exponential kernel as an extension of Remark \ref{rem:exp} above; specifically, define \[\varphi(t) := \sum_{j=1}^n d_j e^{-q_jt},\; (d_1,\dots,d_n)\in \mathbb R^n,\; (q_1,\dots,q_n)\in (0,\infty)^n.\]

\subsubsection{Br\'emaud-Massouli\'e's formulation}

One drawback of the historical formulation proposed by Hawkes is that the definition of the process is not self-contained in the sense that the intensity $\lambda$ depends on the process $N$ which depends itself on $\lambda$.
The thinning method has been an effective and very elegant approach to define a Hawkes process as a system of self-contained SDE solved by $(N,\lambda)$.  Originally proposed by Ideka \cite[Page 93, (7.27)]{ikeda2014stochastic} the thinning method was rigorously defined by Br\'emaud and Massouli\'e in \cite{bremaud1996stability} and also appears in early works for simulation in \cite{lewis1976simulation,ogata1981lewis}. The main intuition consists of considering a Poisson random measure $\mu$ on $ \mathbb R_+\times \mathbb R_+\times \mathbb R_+$ with intensity measure $dtd\theta \nu(dz)$ providing an infinite cloud of potential events scattered on the plane $(s,u)$; then at each time $s$ an event is accepted if the vertical coordinate $u$ lies below the current intensity $\lambda_{s-}$. This method ensures that $(N,\lambda)$ are solution to a coupled system of SDEs, represented via the Poisson embedding method developed by Brémaud and Massoulié \cite{bremaud1996stability}:
\begin{equation}\label{eq: lambda_t}
    \lambda_t = \lambda_\infty(t) + \int_0^{t-}\int_{\mathbb{R}_+}\int_{\mathbb{R}_+} \varphi(t-s)\varrho(z) \textbf{1}_{\theta \leq \lambda_s}\Pi(ds, d\theta, dz) = \lambda_\infty(t) + \sum_{j=1}^n d_j\varsigma^{(j)}_t,
\end{equation}
where each component $\varsigma_t^{(j)}$ is defined by $\varsigma_t^{(j)} = \int_0^{t-}\int_{\mathbb{R}_+}\int_{\mathbb{R}_+} e^{-q_j(t-s)}\varrho(z)\textbf{1}_{\theta\leq\lambda_s}\Pi(ds, d\theta, dz)$ and $\lambda_\infty(\cdot)$ is deterministic. We assume from now on that the following assumption is enforced along the study. 
\begin{assumption}\label{as: L_inft_bound} There exists $\Lambda_\infty<\infty$ such that
    $\sup_{s\in[0,T]}\lambda_\infty(s) \leq \Lambda_\infty$.
\end{assumption}

From \cite{khabou2025markov} and Assumption \ref{as: L_inft_bound}, we can now write $\lambda$ as a first order SDE of the following form:
$$\begin{cases}
    d\varsigma^{(1)}_t = -q_1\varsigma^{(1)}_tdt + \int_{\mathbb{R}_+}\int_{\mathbb{R}_+} \varrho(z)\textbf{1}_{\theta\leq \lambda_\infty(t) + \sum_{j=1}^n d_j\varsigma^{(j)}_t}\Pi(dt,d\theta,dz) \\
    \vdots \\
    d\varsigma^{(n)}_t = -q_n\varsigma^{(n)}_tdt + \int_{\mathbb{R}_+}\int_{\mathbb{R}_+} \varrho(z)\textbf{1}_{\theta\leq \lambda_\infty(t) + \sum_{j=1}^n d_j\varsigma^{(j)}_t}\Pi(dt,d\theta,dz) .
\end{cases}$$
The Poisson embedding of our marked Hawkes process is $N(dt, dz) = \int_0^\infty \textbf{1}_{\theta\leq \lambda_s} \Pi(dt,d\theta,dz)$.
Note that since $\Pi$ is a homogeneous Poisson process, the intensity measure is $dtd\theta \text{m}(dz)$.

\begin{lemma}[Doob-Meyer decomposition - Theorem 2.3.7. in \cite{applebaum2009levy}]\label{lemmaDM}
For any $t\in [0,T]$ we have 
    \begin{equation*}
    \int_0^t\int_{\mathbb{R}_+}N(ds,dz) = \int_0^t\int_{\mathbb{R}_+}\int_{\mathbb{R}_+} \textbf{1}_{\theta\leq \lambda_s} \Pi(dt,d\theta,dz) = \bar{N}_t + \int_0^t\int_{\mathbb{R}_+}\int_{\mathbb{R}_+} \textbf{1}_{\theta\leq \lambda_s}dsd\theta \text{m}(dz)
\end{equation*}
where $\bar{N}_t$ is a martingale and $\int_0^t\int_{\mathbb{R}_+}\int_{\mathbb{R}_+} \textbf{1}_{\theta\leq \lambda_s}dsd\theta \text{m}(dz)$ is the compensator of the Hawkes process. Moreover, for any random process $H(z)$ with $z\in \mathbb{R}_+$ such that 
\begin{equation*}
    \mathbb{E}\Bigg[ \int_0^T\int_{\mathbb{R}_+}\int_{\mathbb{R}_+} |H_s(z)| \textbf{1}_{\theta\leq\lambda_s}\ \text{m}(dz)d\theta ds \Bigg] < \infty,
\end{equation*}
the process $M^H$ defined by
\[M^H_t:=  \int_0^t\int_{\mathbb{R}_+} H_s(z) N(ds,dz)-\int_0^t\int_{\mathbb R_+}\int_{\mathbb R_+} H_s(z) \mathbf 1_{\theta\leq \lambda_s} dsd\theta \text{m}(dz), \] is an $(\mathbb P^\Pi,\mathbb F^\Pi)$-martingale. 
\end{lemma}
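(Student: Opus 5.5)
The plan is to reduce everything to the compensator of the Poisson random measure $\Pi$. Because $\Pi$ is homogeneous Poisson with intensity $dt\,d\theta\,\mathrm m(dz)$, its compensated measure $\tilde\Pi(dt,d\theta,dz):=\Pi(dt,d\theta,dz)-dt\,d\theta\,\mathrm m(dz)$ has a standard martingale property. For every $\mathbb F^\Pi$-predictable integrand $G_s(\theta,z)$ with
\[
\mathbb E\int_0^T\int_{\mathbb R_+}\int_{\mathbb R_+}|G_s(\theta,z)|\,ds\,d\theta\,\mathrm m(dz)<\infty,
\]
the process $\int_0^t\int\int G\,d\tilde\Pi$ is a $(\mathbb P^\Pi,\mathbb F^\Pi)$-martingale. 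This is the content of Theorem 2.3.7 in \cite{applebaum2009levy}, together with the $L^1$ extension of the stochastic integral against a compensated Poisson measure. I will apply this fact twice.

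For the first identity, take $G_s(\theta,z)=\mathbf 1_{\theta\le\lambda_s}$. Two points need checking.

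\begin{enumerate}
\item \emph{Predictability.} By \eqref{eq: lambda_t}, $\lambda_s$ is built from $\Pi$ on $[0,s)$, so it is $\mathcal F_{s-}$-measurable. I will use its left-continuous version $\lambda_{s-}$; the compensator is unchanged because $\lambda_s\neq\lambda_{s-}$ only at countably many times, which form a $ds$-null set.
\item \emph{Integrability.} Since $\mathrm m$ is a probability measure,
\[
\int\int G\,d\theta\,\mathrm m(dz)=\lambda_s,
\]
so I need $\mathbb E\int_0^T\lambda_s\,ds<\infty$.
\end{enumerate}

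The identity itself is then a rewriting. The first equality is the definition of the Poisson embedding $N(dt,dz)=\int\mathbf 1_{\theta\le\lambda_s}\Pi(dt,d\theta,dz)$. The second equality defines $\bar N_t:=\int_0^t\int\int\mathbf 1_{\theta\le\lambda_s}\,\tilde\Pi(ds,d\theta,dz)$, which is a martingale by the fact above.

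The main obstacle is the bound on $\mathbb E\int_0^T\lambda_s\,ds$. I expect it to need a localization argument.

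\begin{enumerate}
\item Let $\tau_k:=\inf\{t:N([0,t]\times\mathbb R_+)\ge k\}$. Up to $\tau_k$ the intensity is bounded, so the stopped compensated integral is a true martingale.
\item Take expectations in \eqref{eq: lambda_t}, with $\varrho$ bounded on the compact support $\mathfrak P$ (or $\int|\varrho|\,d\mathrm m<\infty$), and use Assumption \ref{as: L_inft_bound}. This gives a renewal-type inequality for $u_k(t):=\mathbb E[\lambda_{t\wedge\tau_k}]$:
\[
u_k(t)\le\Lambda_\infty+\|\varphi\|_\infty\,\|\varrho\|\int_0^t u_k(s)\,ds .
\]
\item Gronwall's lemma yields $u_k(t)\le\Lambda_\infty e^{Ct}$ uniformly in $k$. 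Since $\tau_k\to\infty$ by non-explosion (which follows from the same bound), Fatou's lemma gives $\mathbb E\int_0^T\lambda_s\,ds<\infty$.
\end{enumerate}

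If some $d_j<0$, I would replace $\varphi$ by $|\varphi|$ in the bound; because $\lambda\ge0$ is enforced by the indicator, the same argument goes through.

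For the second statement, take $G_s(\theta,z)=H_s(z)\mathbf 1_{\theta\le\lambda_s}$, assuming, as is implicit in the statement, that $H$ is predictable. The hypothesis of the lemma is exactly the integrability condition the martingale fact requires. Moreover,
\[
\int G\,d\Pi=\int H_s(z)\,N(ds,dz),
\]
which follows by integrating the embedding identity $N(ds,dz)=\int\mathbf 1_{\theta\le\lambda_s}\Pi(ds,d\theta,dz)$ over $\theta$. Hence $M^H=\int G\,d\tilde\Pi$ is a $(\mathbb P^\Pi,\mathbb F^\Pi)$-martingale. If the extension to general $L^1$ integrands needs justification, I would first prove it for simple predictable $H$, where it follows directly from the independent increments of $\Pi$, and then pass to the limit in $L^1$ using monotone class arguments.
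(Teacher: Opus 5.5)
Your proposal is correct and follows the same route as the paper. The paper gives no argument beyond citing the martingale property of integrals of predictable $L^1$ integrands against the compensated Poisson measure (Applebaum, Theorem 2.3.7), which is exactly what you apply to $\mathbf 1_{\theta\le\lambda_s}$ and to $H_s(z)\mathbf 1_{\theta\le\lambda_s}$.

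Your explicit check of the hypotheses improves on the bare citation. Predictability of $H$ is genuinely needed. Your localization at the jump times plus Gronwall gives $\mathbb E\int_0^T\lambda_s\,ds<\infty$ without any stability condition and without circularity. By contrast, the paper's subsequent bound on $\mathbb E\int_0^T\lambda_t\,dt$ already takes expectations through this very compensator identity.
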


\subsubsection{Properties of the intensity process}
The analysis relies on the following assumption, enforced throughout.

\paragraph{Assumption $\mathbf{(R)}$.} There exists $\rho\geq 2$ such that  $\varrho^* = \int_{\mathbb{R}_+}|\varrho(z)|^\rho\text{m}(dz) < \infty$ together with the stability condition: $\varrho^* \frac{d_j}{q_j} < 1$ for all $j$. 

\begin{lemma}Under Assumption $\mathbf{(R)}$ there exists $C_T < \infty$ such that $\mathbb{E}\Big[ \int_0^T \big(\lambda_\infty(t) + \sum_{j=1}^n d_j\varsigma_t^{(j)}\big)dt \Big]\leq C_T$. 
\end{lemma}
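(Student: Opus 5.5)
Taking expectations in the SDE for each $\varsigma^{(j)}$ gives a closed system of linear integral inequalities for $m_j(t):=\mathbb E[\varsigma^{(j)}_t]$, which Gronwall then closes; integrating in time yields the bound in the statement. Throughout, write $\lambda_t=\lambda_\infty(t)+\sum_j d_j\varsigma^{(j)}_t$, and treat the $d_j$ as nonnegative (the natural setting for the stability condition in Assumption $\mathbf{(R)}$). If some $d_j$ are negative, the same argument applies to $\sum_j |d_j|\varsigma^{(j)}$, which dominates $\lambda$ wherever $\lambda\ge 0$.

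\textbf{Localization.} Since $\lambda$ is not yet known to be integrable, first introduce the stopping times $\tau_K:=\inf\{t:\lambda_t\geq K\}\wedge T$. On $[0,\tau_K]$ the jump integrand $\varrho(z)\mathbf 1_{\theta\le\lambda_s}$ has compensator $\lambda_s\int\varrho\,\text{m}(dz)\,ds\le K\bar\varrho\,ds$, where $\bar\varrho:=\int|\varrho|\text{m}(dz)\le (\varrho^*)^{1/\rho}<\infty$ by Jensen's inequality and $\rho\ge 2$. So the integrability hypothesis of Lemma \ref{lemmaDM} holds for $H_s(z)=e^{-q_j(t-s)}\varrho(z)\mathbf 1_{s\le\tau_K}$, and the compensated integral is a martingale.

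\textbf{Mean equations.} Set $m_j^K(t):=\mathbb E[\varsigma^{(j)}_{t\wedge\tau_K}]$. The explicit representation of $\varsigma^{(j)}$ (variation of constants) together with Lemma \ref{lemmaDM} gives
\[
m_j^K(t)\le \bar\varrho\int_0^t e^{-q_j(t-s)}\Big(\Lambda_\infty+\sum_{i=1}^n d_i\, m_i^K(s)\Big)ds,
\]
using Assumption \ref{as: L_inft_bound} to bound $\lambda_\infty$. Since $e^{-q_j(t-s)}\le 1$, summing over $j$ with weights $d_j$ gives, for $u^K(t):=\sum_j d_j m_j^K(t)$,
\[
u^K(t)\le C_1+C_2\int_0^t u^K(s)\,ds, \qquad C_1:=\bar\varrho\,T\,\Lambda_\infty\sum_j d_j,\quad C_2:=\bar\varrho\sum_j d_j .
\]
The function $u^K$ is finite, being at most of order $K$. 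Gronwall's lemma therefore yields $u^K(t)\le C_1e^{C_2T}$, uniformly in $K$.

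\textbf{Removing the localization.} Let $K\to\infty$. Because $\lambda$ is càdlàg with finitely many jumps on $[0,T]$ almost surely, $\tau_K\uparrow T$. Fatou's lemma then gives $\mathbb E[\lambda_t]\le \Lambda_\infty+C_1e^{C_2T}$ for every $t\in[0,T]$, and Tonelli's theorem gives
\[
\mathbb E\Big[\int_0^T\lambda_t\,dt\Big]\le T\big(\Lambda_\infty+C_1e^{C_2T}\big)=:C_T .
\]
The stability condition $\varrho^* d_j/q_j<1$ is not needed on a finite horizon; it would only matter for a bound uniform in $T$. In that case one would keep the factors $e^{-q_j(t-s)}$ and use a renewal/Neumann-series argument, with the spectral radius of the matrix $(\bar\varrho d_i/q_j)_{i,j}$ below $1$.

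\textbf{Main obstacle.} The delicate point is the localization and non-explosion: one must justify that the Hawkes process does not accumulate infinitely many jumps before $T$, so that $\tau_K\uparrow T$. The plan here is to dominate the jump count on $[0,\tau_K]$ by a Poisson process of rate $K$. The Gronwall bound, which holds uniformly in $K$, then shows $\mathbb P(\tau_K<T)\le \mathbb E[\sup_{t\le T}\lambda_{t\wedge\tau_K}]/K\to 0$. For this we bound the supremum by $\Lambda_\infty$ plus the total jump mass, whose expectation is controlled by the same compensator estimate.
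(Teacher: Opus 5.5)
Your argument is correct, but it takes a different route from the paper.

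\textbf{The paper's route.} It takes expectations in the $\varsigma^{(j)}$-equations directly, with no localization, so finiteness of $\mathbb{E}[\varsigma^{(j)}_t]$ is taken for granted. It keeps the exponential factors and turns the linear system into the convolution Volterra equation $\mathbb{E}[\lambda_t]=\lambda_\infty(t)+\int_0^t I(t-s)\,\mathbb{E}[\lambda_s]\,ds$ with $I(u)=\varrho^*\sum_j d_j e^{-q_j u}$. It then inverts this with the resolvent $R=\sum_k I^{*k}$, which converges in $L^1$ when $\|I\|_{L^1}=\varrho^*\sum_j d_j/q_j<1$, and obtains $\int_0^T\mathbb{E}[\lambda_t]\,dt\le \Lambda_\infty T/(1-\varrho^*\sum_j d_j/q_j)$.

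\textbf{What each approach buys.} The paper's route gives a constant linear in $T$, i.e.\ a bound on $\mathbb{E}[\lambda_t]$ uniform in the horizon. The price is twofold. First, it needs the \emph{summed} stability condition, which the componentwise condition $\varrho^* d_j/q_j<1$ of Assumption $\mathbf{(R)}$ does not imply once $n\geq 2$ (take $\varrho^* d_j/q_j=0.9$ for two indices). Second, it writes $\varrho^*$ where the compensator actually produces the first moment $\int\varrho\,\mathrm{m}(dz)$. Your localization-plus-Gronwall argument only gives an $e^{C_2T}$ constant. In exchange, it needs no stability assumption, justifies integrability before manipulating expectations, and proves exactly the finite-horizon statement. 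The matrix $(\bar\varrho d_i/q_j)_{i,j}$ you mention has rank one, so its spectral radius is $\bar\varrho\sum_j d_j/q_j$; your closing remark therefore recovers the paper's condition $\|I\|_{L^1}<1$, with the correct moment.

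\textbf{A small slip.} For $m_j^K(t)=\mathbb{E}[\varsigma^{(j)}_{t\wedge\tau_K}]$ the kernel is $e^{-q_j(t\wedge\tau_K-s)}\geq e^{-q_j(t-s)}$. Your choice of $H$ computes the expectation of the stopped-driver integral $\int_0^{t\wedge\tau_K}e^{-q_j(t-s)}\cdots$, not of $\varsigma^{(j)}_{t\wedge\tau_K}$, so your first display is not literally justified as written. This does not affect the proof, because you immediately bound that factor by $1$. The inequality you actually use, $m_j^K(t)\le\bar\varrho\int_0^t\big(\Lambda_\infty+\sum_i d_i m_i^K(s)\big)ds$, does hold, since $\lambda_s\mathbf{1}_{\{s<\tau_K\}}\le\lambda_{s\wedge\tau_K}$.
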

\begin{proof}
We begin by taking the expectation of each $\varsigma^{(j)}$. Then, 
\begin{equation*}
    \frac{d}{dt}\mathbb{E}\big[\varsigma_t^{(j)}\big] = -q_j\mathbb{E}\big[\varsigma_t^{(j)}\big] + \mathbb{E}\Bigg[ \int_{\mathbb{R}_+}\int_{\mathbb{R}_+} \varrho(z)\textbf{1}_{\theta \leq \lambda_\infty(t) + \sum_{j=1}^n d_j\varsigma_t^{(j)}} \ d\theta \ \text{m}(dz) \Bigg],
\end{equation*}
which we can rewrite as
\begin{equation*}
    \frac{d}{dt}\mathbb{E}\big[\varsigma_t^{(j)}\big] = -q_j\mathbb{E}\big[\varsigma_t^{(j)}\big] + \varrho^*\Bigg( \lambda_\infty(t) + \sum_{j=1}^n d_j\mathbb{E}\big[\varsigma_t^{(j)}\big]\Bigg).
\end{equation*}
Assuming any pre-history contributions to $\varsigma^{(j)}$ are incorporated into the baseline intensity $\lambda_\infty$, we can solve the above linear ODE as:
\begin{equation*}
    \mathbb{E}\big[\varsigma_t^{(j)}\big] = e^{-q_jt}\mathbb{E}\big[\varsigma_0^{(j)}\big] + \varrho^*\int_0^te^{-q_j(t-s)}\Big(\lambda_\infty(s) + \sum_{j=1}^n d_j\mathbb{E}\big[\varsigma^{(j)}_s\big]\Big)ds.
\end{equation*}
From Assumption (\textbf{R}) together with $\mathbb{E}[\lambda_t] = \lambda_\infty(s) + \sum_{j=1}^n d_j\mathbb{E}\big[\varsigma^{(j)}_s\big]$ we get the following convolutional Volterra equation
\begin{equation*}
    \mathbb{E}[\lambda_t] = \lambda_\infty(t) + \int_0^t I(t-s)\mathbb{E}[\lambda_s]ds, \quad I(u) := \varrho^*\sum_{j=1}^n d_je^{-q_ju}.
\end{equation*}
Define the $n$-fold convolution powers $I^{*0}(u) = \delta_0$, $I^{*1}(u) = I(u)$, and $I^{*(n+1)}(u) = (I*I^{*n})(u) = \int_0^t I(t-s)I^{*n}(s)ds$—where $\delta_0$ is the Dirac measure at 0. Define the renewal kernel as $R(t) = \sum_{n=0}^\infty I^{*n}(t)$. Notice that the renewal kernel converges in $L^1$ if $\|I\|_{L^1} < 1$ where $\|\cdot\|_{L^1}$ is the convolution norm.
\begin{equation*}
    \|I\|_{L^1} = \int_0^\infty I(u)du = \varrho^*\sum_{j=1}^nd_j\int_0^\infty e^{-q_ju}du = \varrho^*\sum_{j=1}^n \frac{d_j}{q_j}.
\end{equation*}
Under Assumption \textbf{(R)}, $\|R\|_{L^1} = \frac{1}{1- \varrho^*\sum_{j=1}^n \frac{d_j}{q_j}}$.
Now, by the definition of a convolution from chapter 2 of \cite{gripenberg1990volterra}, we can write 
$$\mathbb{E}[\lambda_t] = (R * \lambda_\infty)(t) = \int_0^t R(t-s)\lambda_\infty(s)ds.$$
Integration over time and Fubini's theorem gives
\begin{align*}
    \int_0^T \mathbb{E}[\lambda_t]dt &= \int_0^T\int_0^t R(t-s)\lambda_\infty(s)dsdt = \int_0^T\lambda_\infty(s)\Bigg( \int_s^T R(t-s)dt\Bigg)ds \\
    &= \int_0^T\lambda_\infty(s)\Bigg(\int_0^{T-s} R(u)ds\Bigg )ds \leq \frac{1}{1- \varrho^*\sum_{j=1}^n \frac{d_j}{q_j}}\int_0^T\lambda_\infty(s)ds.
\end{align*}
Finally, via Assumption \ref{as: L_inft_bound}, we have $\int_0^T \mathbb{E}[\lambda_t]dt \leq \frac{\Lambda_\infty T}{1- \varrho^*\sum_{j=1}^n \frac{d_j}{q_j}}$. \end{proof}

\begin{lemma}\label{lm: lam_mom_bound}
    For any $\ell\geq2$ such that Assumption \textbf{(R)} holds, $\mathbb{E}[\sup_{t\leq T}\lambda_t^\ell] < \infty$, where $\lambda_t$ is defined as in \ref{eq: lambda_t}.
\end{lemma}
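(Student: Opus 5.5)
We bound $\sup_{t\le T}\lambda_t$ pathwise by the total jump mass and then control moments of that mass through a Grönwall argument on $\mathbb E[\lambda_t^\ell]$. Since $\varrho\ge 0$ and $d_j\ge 0$, each $\varsigma^{(j)}$ is nonnegative and satisfies $\varsigma^{(j)}_t\le \int_0^{t}\int\int \varrho(z)\mathbf 1_{\theta\le\lambda_{s}}\Pi(ds,d\theta,dz)=:S_t$, which is nondecreasing in $t$. Setting $D:=\sum_j d_j$, Assumption \ref{as: L_inft_bound} gives
\[
\sup_{t\le T}\lambda_t\le \Lambda_\infty + D\, S_T .
\]
It therefore suffices to show $\mathbb E[S_T^\ell]<\infty$. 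Since $S_T\le \Lambda_\infty^{-1}\cdots$ is not directly available, we instead use $\lambda_s\le \Lambda_\infty+D S_{s}$ and work with $S$ itself.

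\textbf{Step 1 (localization).} Let $\tau_K:=\inf\{t: S_t\ge K\}\wedge T$. The pre-jump value $S_{t\wedge\tau_K-}$ is then bounded, and the jump sizes are bounded by $\sup\varrho$ on the compact support $\mathfrak P$. If $\varrho$ is unbounded on $\mathfrak P$, we instead use the moment assumption $\varrho^*<\infty$ directly below. Together these make all the quantities in Step 2 finite.

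\textbf{Step 2 (Itô formula for $x\mapsto x^\ell$).} For the pure-jump process $S$,
\[
\mathbb E[S_{t\wedge\tau_K}^\ell]=\mathbb E\int_0^{t\wedge\tau_K}\int\big((S_{s-}+\varrho(z))^\ell-S_{s-}^\ell\big)\lambda_s\,\mathrm m(dz)\,ds ,
\]
where the compensator comes from Lemma \ref{lemmaDM}. The elementary inequality $(x+y)^\ell-x^\ell\le C_\ell(x^{\ell-1}y+y^\ell)$ for $x,y\ge0$ then bounds the integrand by $C_\ell\big(S^{\ell-1}\int\varrho\,d\mathrm m+\int\varrho^\ell d\mathrm m\big)\lambda_s$. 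Here the mixed moments $\int \varrho^k d\mathrm m$ for $k\le \ell$ are finite when $\ell\le\rho$, by Assumption $\mathbf{(R)}$ and Hölder's inequality, since $\mathrm m$ is a probability measure. Substituting $\lambda_s\le\Lambda_\infty+DS_{s-}$ and using Young's inequality ($S^{\ell-1}\le 1+S^\ell$, $S^{\ell}\cdot 1$) yields
\[
\mathbb E[S_{t\wedge\tau_K}^\ell]\le C\int_0^t\big(1+\mathbb E[S^\ell_{s\wedge\tau_K}]\big)ds .
\]

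\textbf{Step 3 (Grönwall and monotone convergence).} Grönwall gives a bound independent of $K$. Letting $K\to\infty$, and noting $\tau_K\to T$ because $S$ is finite a.s. (its expectation is finite by the preceding lemma), Fatou's lemma gives $\mathbb E[S_T^\ell]<\infty$. This concludes the proof via the pathwise bound.

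The main obstacle is the moment requirement on the marks. The Itô expansion needs $\int\varrho^\ell d\mathrm m<\infty$, which Assumption $\mathbf{(R)}$ grants only for $\ell\le\rho$. I would read the phrase ``for any $\ell\ge2$ such that Assumption $\mathbf{(R)}$ holds'' as meaning $\ell\le\rho$, or alternatively use the compact support of $\mathrm m$ with $\varrho$ continuous, and hence bounded, on $\mathfrak P$, which gives all moments. A secondary point is that the argument does not actually need the stability condition: it is a finite-horizon Grönwall argument, and the linear growth of the intensity in $S$ suffices.
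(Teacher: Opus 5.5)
Your proof is correct, and it takes a different route from the paper.

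The paper applies It\^o's formula directly to $\lambda_t^\ell$. It bounds the decay drift from below through $q_{\min}(\lambda_t-\lambda_\infty(t))$, uses Young's inequality and a binomial expansion of the jump term, and closes with Gr\"onwall on $t\mapsto\mathbb E[\lambda_t^\ell]$. What that argument actually delivers is $\sup_{t\le T}\mathbb E[\lambda_t^\ell]<\infty$, with the supremum \emph{outside} the expectation. It also differentiates $\mathbb E[\lambda_t^\ell]$ without first knowing this quantity is finite.

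Your pathwise domination $\sup_{t\le T}\lambda_t\le\Lambda_\infty+D\,S_T$, by the nondecreasing total-mark process, reduces the supremum-inside statement to a single terminal moment $\mathbb E[S_T^\ell]$. The localization at $\tau_K$ then makes the Gr\"onwall step rigorous. So your route proves the lemma as stated, whereas the paper's would need an extra step to move the supremum inside. That extra step is essentially your domination, which the paper itself uses later for $\varsigma$ in Lemma \ref{lemmavarsigma}. There, however, it is used only after invoking the present lemma; you close the loop self-containedly through $\lambda_s\le\Lambda_\infty+D\,S_{s-}$.

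What you give up is the kernel's decay. That is harmless on a finite horizon: the paper's Gr\"onwall constant $B$ also grows the bound with $T$, so neither argument is uniform in time.

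You use $\varrho\ge0$ and $d_j\ge0$, which the paper leaves implicit (its Lemma \ref{lemmavarsigma} needs the same signs). If signs are allowed, the same argument runs with $|\varrho|$, $|d_j|$ and $\sup_t|\lambda_\infty(t)|$, since the jump rate is $\lambda_s^+$.

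Two small clean-ups:
\begin{itemize}
\item Delete the garbled sentence about $S_T\le\Lambda_\infty^{-1}\cdots$.
\item You do not need the preceding lemma for non-explosion. Your $K$-uniform bound already gives $\mathbb P(\tau_K<T)\le\mathbb P(S_{T\wedge\tau_K}\ge K)\le CK^{-\ell}\to0$.
\end{itemize}

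Your reading of the moment condition, $\ell\le\rho$ (or every $\ell$, using continuity of $\varrho$ on the compact $\mathfrak P$), is exactly the requirement the paper's binomial expansion uses without saying so.
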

\begin{proof}
By applying It\^o's formula to $\lambda^{\ell}$ we get
\begin{multline*}
    d\big(\lambda_t^{\ell}\big) = \ell\lambda_t^{\ell-1}\Bigg( -\sum_{j=1}^n q_j\int_0^t\int_{\mathbb{R}_+}\int_{\mathbb{R}_+}d_je^{-q_j(t-s)}\textbf{1}_{\theta\leq\lambda_s}\varrho(z)\Pi(ds,d\theta,dz)\Bigg)dt \\
    + \int_{\mathbb{R}_+}\int_{\mathbb{R}_+}\Bigg(\lambda_t + \Big(\sum_{j=1}^n d_j\varrho(z)\Big)^{\ell} - \lambda_t^{\ell}\Bigg)\textbf{1}_{\theta\leq\lambda_t}\Pi(dt,d\theta,dz) .
\end{multline*}
Taking expectations, we have
\begin{multline*}
    \frac{d}{dt}\mathbb{E}[\lambda_t^{\ell}] = -\ell\mathbb{E}\Bigg[  \lambda_t^{\ell-1}\sum_{j=1}^n q_j\int_0^t\int_{\mathbb{R}_+}\int_{\mathbb{R}_+}d_je^{-q_j(t-s)}\textbf{1}_{\theta\leq\lambda_s}\varrho(z)\Pi(ds,d\theta,dz) \Bigg] + \ell\mathbb{E}[\lambda_\infty(t)\lambda_t^{\ell - 1}] \\
    +\mathbb{E}\Bigg[ \lambda_t\int_{\mathbb{R}_+} \Bigg(\lambda_t + \Big(\sum_{j=1}^n d_j\varrho(z)\Big)^{\ell} - \lambda_t^{\ell}\Bigg)\text{m}(dz)\Bigg].
\end{multline*}\label{eq:moment_diffeq}

Focusing on the drift term, we notice that
\begin{align*}
    \sum_{j=1}^n q_j\int_0^t\int_{\mathbb{R}_+\times\mathbb{R}_+}d_je^{-q_j(t-s)}\textbf{1}_{\theta\leq\lambda_s}\varrho(z)\Pi(ds,d\theta,dz) &\geq q_{\min}  \int_0^t\int_{\mathbb{R}_+\times\mathbb{R}_+}\sum_{j=1}^nd_je^{-q_j(t-s)}\textbf{1}_{\theta\leq\lambda_s}\varrho(z)\Pi(ds,d\theta,dz) \\
    &= q_{\min} (\lambda_t - \lambda_\infty(t)).
\end{align*}
Therefore,
\begin{align*}
    -\ell\mathbb{E}\Bigg[ \lambda_t^{\ell-1} \sum_{j=1}^n q_j\int_0^t\int_{\mathbb{R}_+}\int_{\mathbb{R}_+}d_je^{-q_j(t-s)}\textbf{1}_{\theta\leq\lambda_s}\varrho(z)\Pi(ds,d\theta,dz) \Bigg] &\leq -\ell q_{\min}\mathbb{E}[\lambda_t^{\ell-1}(\lambda_t - \lambda_\infty(t)] \\
    &= -\ell q_{\min}\mathbb{E}[\lambda^{\ell}] + \ell q_{\min}\mathbb{E}[\lambda_\infty(t)\lambda_t^{\ell-1}].
\end{align*}
Applying Young's inequality gives, for any $\epsilon \in(0,1)$,
$$\ell q_{\min}\mathbb{E}[\lambda_\infty(t)\lambda_t^{\ell -1}] \leq \ell q_{\min}\Lambda_\infty (\epsilon\mathbb{E}[\lambda_t^{\ell}] + C_\epsilon).$$
The drift term is bounded above by $-\ell q_{\min}(1-\tilde{\epsilon})\mathbb{E}[\lambda_t^{\ell}] + C_1$ for some small $\tilde{\epsilon}$ that depends on $\epsilon, \Lambda_\infty$. Now let $\Phi(\lambda) = \int_{\mathbb{R}_+} \Big(\lambda_t + \Big(\sum_{j=1}^n d_j\varrho(z)\Big)^{\ell} - \lambda_t^{\ell}\Big)\text{m}(dz)$. From a binomial expansion and the integrability of the moments of $\sum_{j=1}^n d_j\varrho(z)$ we note that $$\Phi(\lambda) \leq C\sum_{k=0}^\ell \lambda^k \int_{\mathbb{R}_+}\Big(\sum_{j=1}^n d_j\varrho(z)\Big)^{\ell-k}\text{m}(dz).$$
Hence, $\lambda\Phi(\lambda) \leq C\sum_{k=0}^\ell \lambda^{k+1} \int_{\mathbb{R}_+}\Big(\sum_{j=1}^n d_j\varrho(z)\Big)^{\ell-k}\text{m}(dz)$. Then, there exist constants, $C_2\text{ and } C_3$ such that
$$\mathbb{E}[\lambda_t\Phi(\lambda_t)] \leq C_2 + C_3\mathbb{E}[\lambda_t^{\ell}].$$

Combining our two bounds we see $\frac{d}{dt}\mathbb{E}[\lambda_t^{\ell}] \leq A + B\mathbb{E}[\lambda_t^{\ell}]$. From Gronwall's lemma on $[0,T]$, we deduce that $\sup_{0\leq t\leq T} \mathbb{E}[\lambda_t^{m+1}]<\infty$. \end{proof}

\begin{lemma}\label{lemmavarsigma}
    For any $\ell\geq 2$ such that Assumption \textbf{(R)} holds, $\mathbb{E}[\sup_{t\leq T}(\varsigma^{(j)}_t)^{\ell}] < \infty$.
\end{lemma}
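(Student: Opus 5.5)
The plan is to dominate each $\varsigma^{(j)}$ pathwise by a multiple of the total jump mass and then transfer the moment bound from $\lambda$, as in Lemma~\ref{lm: lam_mom_bound}. Since the kernel satisfies $0<e^{-q_j(t-s)}\le 1$ for $s\le t$, and assuming $\varrho\ge 0$ (otherwise work with $|\varrho|$), we have
\[
0\le \varsigma^{(j)}_t \le \int_0^{t}\int_{\mathbb{R}_+}\int_{\mathbb{R}_+} |\varrho(z)|\mathbf 1_{\theta\le\lambda_s}\Pi(ds,d\theta,dz) =: S_t .
\]
The right-hand side $S_t$ is nondecreasing in $t$, so $\sup_{t\le T}\varsigma^{(j)}_t\le S_T$. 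It therefore suffices to prove $\mathbb E[S_T^\ell]<\infty$, which removes the supremum entirely.

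To bound $\mathbb E[S_T^\ell]$, decompose $S_T$ with Lemma~\ref{lemmaDM} as $S_T=M_T+A_T$, where
\[
A_T=\int_0^T\lambda_s\,ds\int|\varrho|\,\mathrm m(dz), \qquad M_T=\int_0^T\!\!\int\!\!\int|\varrho(z)|\mathbf 1_{\theta\le\lambda_s}\tilde\Pi(ds,d\theta,dz),
\]
and $\tilde\Pi$ is the compensated measure. By Jensen, $\mathbb E[A_T^\ell]\le C\,T^{\ell-1}\int_0^T\mathbb E[\lambda_s^\ell]\,ds$, which is finite by Lemma~\ref{lm: lam_mom_bound}. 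Note that $\int|\varrho|\,d\mathrm m<\infty$ follows from $\varrho^*<\infty$ because $\mathrm m$ is a probability measure.

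For the martingale part, apply the Burkholder–Davis–Gundy inequality in its Kunita/Bichteler–Jacod form for compensated Poisson integrals with $\ell\ge2$:
\[
\mathbb E|M_T|^\ell\le C_\ell\Big(\mathbb E\Big[\Big(\int_0^T\lambda_s\,ds\int|\varrho|^2\,d\mathrm m\Big)^{\ell/2}\Big]+\mathbb E\Big[\int_0^T\lambda_s\,ds\int|\varrho|^\ell\,d\mathrm m\Big]\Big).
\]
Both terms are controlled by moments of $\lambda$ of order at most $\ell$, again using Lemma~\ref{lm: lam_mom_bound}. The mark moments $\int|\varrho|^2\,d\mathrm m$ and $\int|\varrho|^\ell\,d\mathrm m$ must be finite. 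This holds for $\ell\le\rho$ by Assumption $\mathbf{(R)}$, which is how I read the hypothesis ``$\ell$ such that $\mathbf{(R)}$ holds''. Alternatively, compact support of $\mathrm m$ together with local boundedness of $\varrho$ makes these moments finite for every $\ell$.

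The main obstacle is making the BDG/Kunita step rigorous, since the integrand depends on $\lambda$, which is only known to have finite moments. I would first localize with the stopping times $\tau_k=\inf\{t:\lambda_t\ge k\}\wedge T$, apply the inequality to the stopped integrals, and then let $k\to\infty$ by monotone convergence, since $S_{T\wedge\tau_k}\uparrow S_T$. As a cruder fallback I would use the discrete-jump-count bound $S_T^\ell\le N_T^{\ell-1}\sum_{i\le N_T}|\varrho(Z_i)|^\ell$ combined with moments of $N_T$. These moments can be obtained by the same BDG argument applied with $\varrho\equiv1$.
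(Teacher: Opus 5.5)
Your proposal is correct and follows essentially the same route as the paper: dominate $\sup_{t\le T}\varsigma^{(j)}_t$ pathwise by the total jump mass $\int_0^T\!\int\!\int \varrho(z)\mathbf 1_{\theta\le\lambda_s}\Pi(ds,d\theta,dz)$, split it into compensator and compensated martingale, and bound the two parts by H\"older's inequality and Kunita's BDG-type inequality respectively, both relying on Lemma~\ref{lm: lam_mom_bound}. Your version is slightly tidier than the written proof, since the compensator carries $\int|\varrho|\,d\mathrm m$ rather than $\varrho^*$, the large-jump term uses $|\varrho|^\ell$ rather than $\varrho^{\ell+1}$, and you localize before applying the inequality, but these are refinements of the same argument.
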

\begin{proof}
Let $I_T = \int_0^T\int_{\mathbb{R}_+}\int_{\mathbb{R}_+} \varrho(z)\textbf{1}_{\theta \leq \lambda_s}\Pi(ds, d\theta, dz)$. 
Then $\sup_{t\leq T}\varsigma_t^{(j)} \leq I_T$ for any path $\omega$. So, it suffices to show $\mathbb{E}[I_T^{m+1}] < \infty$.
We can decompose $I_T$ into the the compensated martingale and the compensator as follows:
\begin{equation*}
   I_T = \underbrace{\int_0^T\int_{\mathbb{R}_+}\int_{\mathbb{R}_+} \varrho(z)(\Pi(dt,d\theta,dz) - dtd\theta\text{m}(dz))}_{\mathcal{M}_T} + \underbrace{\varrho^*\int_0^T \Big( \lambda_\infty(t) + \sum_{j=1}^n d_j\varsigma_t^{(j)} \Big)dt}_{\mathcal{A}_T}.
\end{equation*}
Note that $\varsigma^{(j)}_t = \sum_{s\leq t}e^{-q_j(t-s)}\varrho(z_s) \leq \sum_{s\leq T}e^{-q_j(t-s)}\varrho(z_s) =I_T,\; \mathbb P-a.s.$. Hence, $\sup_{t\leq T} \varsigma^{(j)}_t \leq I_T$, so it suffices to show $\mathbb{E}[I_T^{\ell}]<\infty$.
Then from Lemma \ref{lm: lam_mom_bound} we have
$$\mathbb{E}[\mathcal{A}_T^{\ell}] = (\varrho^*)^{\ell}\mathbb{E}\left[ \left( \int_0^T \lambda_\infty(t) + \sum_{j=1}^n d_j\varsigma_t^{(j)} \ dt \right)^{\ell} \right] \leq (\varrho^*)^{\ell}T^{\ell-1}\int_0^T \mathbb{E}[\lambda_t^{\ell}]dt < \infty.$$

Now, by using Theorem 2.11 in \cite{kunita2004stochastic}, we get for some constant $C_\ell>0$
\begin{equation*}
    \mathbb{E}[|\mathcal{M}_T|^{\ell}] \leq C_{\ell}\Bigg( \mathbb{E}\left[ \left(\int_0^T \lambda_sds \cdot \int_{\mathbb{R}_+}\varrho(z)^2\text{m}(dz) \right)^{\ell/2} \right] + \mathbb{E}\left[ \int_0^T \lambda_s ds \cdot \int_{\mathbb{R}_+}\varrho(z)^{\ell+1}\text{m}(dz) \right] \Bigg).
\end{equation*}
From Lemma \ref{lm: lam_mom_bound} and Assumption \textbf{(R)}, the right hand side is finite and consequently for some $C>0$
$$\mathbb{E}\left[\sup_{t\leq T} \big(\varsigma_t^{(j)}\big)^{\ell}\right] \leq \mathbb{E}[I_T^{\ell}] \leq C\Big( \mathbb{E}\big[|\mathcal{M}_T|^{\ell}\big] + \mathbb{E}[\mathcal{A}_T^{\ell}] \Big) < \infty.$$
\end{proof}

\subsection{Controlled SDE and singular control}\label{sec:controlSDE}
We consider the solution to an SDE with drift, volatility and cyber risk cost with singular control and driven by a Hawkes process. Let $B$ and $\Gamma$ be compact subset of $\mathbb R\times \mathbb R$. 
We define the drift, volatility, and Hawkes jump size of our controlled process by 
\[\mu : \mathbb R\longrightarrow \mathbb R, \; \sigma: \mathbb R\times B\longrightarrow \mathbb R,\; \chi :  \mathbb{R}\times{\mathbb{R}_+}\times \Gamma\longrightarrow \mathbb{R},\; \varrho:\mathbb R_+\longrightarrow \mathbb R,
\]
assumed to be continuous functions such that for any $(x,z,b,g)\in \mathbb R\times \mathbb R_+\times B\times \Gamma$
\[ 
\mathbf{(L)}\begin{cases}
  & |\mu(x)-\mu(\tilde x)|+|\sigma(x,b)-\sigma(\tilde x,b)| + |\chi(x, z,g) - \chi( \tilde{x},  z,g) |\leq c|x-\tilde x|,\\
  &  |\mu(x)|+  |\sigma(x,b)| \leq c(1+|x|) \\
&|\chi(x, z,g)|\leq c(1+\tilde \chi(x) +|z|^p),\; \text{ for some }p>1,
\end{cases}
\]
where $\tilde \chi$ is a bounded function from $\mathbb R$ into $\mathbb R$.

\begin{definition}[Admissible controls]
    An admissible control is defined as a pair of progressively measurable processes $\nu:=(\beta,\gamma)$ with values in $B\times \Gamma$. We denote by $\mathcal V$ the set of admissible controls $\nu_t=(\beta_t, \gamma_t)$.
\end{definition}

We now introduce the SDE system without singular control, see \cite{bensoussan2024stochastic,khabou2025markov}. 
\begin{equation}\label{eq:non-sing_dyn}
    \begin{cases}
        d Y_t =  \mu( Y_t)dt+ \sigma( Y_t,\beta_t)dW_t + \int_{\mathbb{R}_+}\int_{\mathbb{R}_+}\chi( Y_t, z,\gamma_t)\textbf{1}_{\theta\leq \lambda_\infty(t) + \sum_{j=1}^n d_j\varsigma^{(j)}_t}\Pi(dt,d\theta, dz), \\
    d\varsigma^{(1)}_t = -q_1\varsigma^{(1)}_tdt + \int_{\mathbb{R}_+}\int_{\mathbb{R}_+} \varrho(z)\textbf{1}_{\theta\leq \lambda_\infty(t) + \sum_{j=1}^n d\varsigma^{(1)}_t} \Pi(dt,d\theta,dz),\\
    \vdots \\
    d\varsigma^{(n)}_t = -q_n\varsigma^{(n)}_tdt + \int_{\mathbb{R}_+}\int_{\mathbb{R}_+} \varrho(z)\textbf{1}_{\theta\leq \lambda_\infty(t) + \sum_{j=1}^n d\varsigma^{(n)}_t}\Pi(dt,d\theta,dz),\\
       N(dt,dz)=\int_0^\infty \mathbf 1_{\theta\leq \lambda_\infty(t) + \sum_{j=1}^n d_j\varsigma^{(j)}_t} \Pi(dt,d\theta,dz).
    \end{cases}
\end{equation}

\begin{remark}[Exponential Kernel dimension 1]
    Assume that $n=1$ so that $\varphi(t)=d e^{-qt}$. The SDE system is given by

    \begin{equation*}
    \begin{cases}
        dY_t = \mu( Y_t)dt + \int_0^t\sigma( Y_t,\beta_t)dW_t + \int_{\mathbb{R}_+}\int_{\mathbb{R}_+}\chi( Y_t, z,\gamma_t)\textbf{1}_{\theta\leq \lambda_t}\Pi(dt,d\theta, dz), \\
       \lambda_t = \lambda_\infty(t) + \int_0^{t-}\int_{\mathbb{R}_+}\int_{\mathbb{R}_+}  d e^{-q(t-s)}\varrho(z) \textbf{1}_{\theta \leq \lambda_s}\Pi(ds, d\theta, dz),\\
       N(dt,dz)=\int_0^\infty \mathbf 1_{\theta\leq \lambda_t} \Pi(dt,d\theta,dz).
    \end{cases}
\end{equation*}

\end{remark}
\begin{definition}[Solution to \eqref{eq:non-sing_dyn}]
    A solution to \eqref{eq:non-sing_dyn} is a tuple $(Y,\varsigma_1,\dots,\varsigma_n,N)$ such that $(Y,\varsigma_1,\dots,\varsigma_n,N)$ satisfies the SDE system \eqref{eq:non-sing_dyn} and $N$ is a marked Hawkes process with compensator $dsd\theta\emph{m}(dz)$.
\end{definition}

\noindent We now extend the definition of this SDE with a singular control. 

\begin{definition}[Singular control]
    Let $\Xi$ be the set of nonnegative, nondecreasing, and left-continuous right-limited functions. Any $\xi\in\mathbb{R}_+$ is called a singular control if it is progressively measurable with respect to $\mathcal{F}$ and its sample paths are in $\Xi$.
\end{definition}

The singular controlled SDE becomes:
\begin{equation}\label{SDE:singular}
    \begin{cases}
        &dX_t =\mu( X_t)dt + \sigma( X_t,\beta_t)dW_t + d\xi_t+\int_{\mathbb{R}_+}\int_{\mathbb{R}_+}\chi( X_t, z,\gamma_t)\textbf{1}_{\theta\leq \lambda_\infty(t) + d^\top \bar\varsigma_t}\Pi(dt,d\theta, dz) \\
    &d\varsigma^{(1)}_t = -q_1\varsigma^{(1)}_tdt + \int_{\mathbb{R}_+}\int_{\mathbb{R}_+} \varrho(z)\textbf{1}_{\theta\leq \lambda_\infty(t) + \sum_{j=1}^n d_j\varsigma^{(j)}_t} \Pi(dt, d\theta, dz)\\
   & \vdots \\
   & d\varsigma^{(n)}_t = -q_n\varsigma^{(n)}_tdt + \int_{\mathbb{R}_+}\int_{\mathbb{R}_+} \varrho(z)\textbf{1}_{\theta\leq \lambda_\infty(t) + \sum_{j=1}^n d_j\varsigma^{(j)}_t}\Pi(dt, d\theta, dz),\\
       &N(dt,dz)=\int_0^\infty \mathbf 1_{\theta\leq \lambda_\infty(t) + \sum_{j=1}^n d_j\varsigma^{(j)}_t} \Pi(dt,d\theta,dz),
    \end{cases}
\end{equation}
where set $\overline \varsigma:=(\varsigma^{(1)},\dots,\varsigma^{(n)})$ and $\bar{d} = (d_1,\dots,d_n)$, so that $\sum_{j=1}^nd_j\varsigma_t^{(j)} = d^\top \bar{\varsigma}_t$. 
\begin{lemma}\label{lm: X_moments}
Let Assumption \textbf{(R)} and Assumption \textbf{(L)} be satisfied. For any $\ell\geq 1$, we have   $\sup_{t\leq T} \mathbb{E}|X_t|^{\ell} < \infty$.
\end{lemma}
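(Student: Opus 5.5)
We plan to prove the bound by writing $X$ in integral form and estimating each term separately, then closing with Gronwall's lemma. It suffices to treat $\ell\geq 2$, since the case $\ell\in[1,2)$ follows from Jensen's inequality. One point has to be made explicit first: the statement can only hold if the singular control has moments. We therefore work with singular controls satisfying $\mathbb E[\xi_T^{\ell}]<\infty$ and regard this as part of admissibility. Since $\xi$ is nondecreasing and nonnegative, we have $\sup_{t\le T}|\xi_t|\le \xi_T$, so this term is then trivially controlled.

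Starting from
\[
X_t=X_0+\int_0^t\mu(X_s)ds+\int_0^t\sigma(X_s,\beta_s)dW_s+\xi_t+\int_0^t\int_{\mathbb R_+}\int_{\mathbb R_+}\chi(X_{s},z,\gamma_s)\mathbf 1_{\theta\le\lambda_s}\Pi(ds,d\theta,dz),
\]
we bound $|X_t|^\ell$ by $5^{\ell-1}$ times the sum of the $\ell$-th powers of the five terms. We then estimate each term as follows.
\begin{itemize}
\item For the drift, Hölder's inequality and the linear growth in $\mathbf{(L)}$ give $C\int_0^t(1+\mathbb E|X_s|^\ell)ds$.
\item For the Brownian integral, Burkholder--Davis--Gundy followed by Hölder gives the same type of bound, using $|\sigma(x,b)|\le c(1+|x|)$ uniformly in $b\in B$ (compactness of $B$).
\item For the jump term, we use the third line of $\mathbf{(L)}$. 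Because $\tilde\chi$ is bounded, we get $|\chi(x,z,g)|\le C(1+|z|^p)$ uniformly in $x$ and $g$. The jump integral is therefore dominated by $J_T:=C\int_0^T\int(1+|z|^p)N(ds,dz)$, which does not involve $X$ at all.
\end{itemize}

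To bound $\mathbb E[J_T^\ell]$ we argue exactly as in the proof of Lemma \ref{lemmavarsigma}. We split $J_T$ into its compensated martingale part and its compensator $C\int_0^T\lambda_s\,ds\int(1+|z|^p)\mathrm m(dz)$, using Lemma \ref{lemmaDM}. The compensator is handled by Hölder's inequality together with Lemma \ref{lm: lam_mom_bound}. The martingale part is handled by Kunita's inequality (Theorem 2.11 in \cite{kunita2004stochastic}), which bounds its $\ell$-th moment by $\mathbb E[(\int_0^T\lambda_s ds)^{\ell/2}]\int(1+|z|^p)^2\mathrm m(dz)$ plus $\mathbb E[\int_0^T\lambda_sds]\int(1+|z|^p)^\ell\mathrm m(dz)$. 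Every $z$-moment appearing here is finite because $\mathrm m$ is supported in the compact set $\mathfrak P$, and the $\lambda$-moments are finite by Lemma \ref{lm: lam_mom_bound}.

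Collecting the estimates gives $\mathbb E|X_t|^\ell\le C\bigl(1+\mathbb E|X_0|^\ell+\mathbb E\xi_T^\ell+\mathbb E J_T^\ell\bigr)+C\int_0^t\mathbb E|X_s|^\ell ds$. Gronwall's lemma then yields a bound on $\sup_{t\le T}\mathbb E|X_t|^\ell$ that is uniform in $t$. To make the Gronwall step rigorous we first localise with the stopping times $\tau_k=\inf\{t:|X_t|\ge k\}$, which makes all the quantities finite a priori, and then let $k\to\infty$ using Fatou's lemma. The same argument, with BDG applied to the supremum, even gives the stronger bound on $\mathbb E\sup_{t\le T}|X_t|^\ell$.

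We expect the main obstacle to be the jump term, and in particular checking that the power $p>1$ in the growth bound on $\chi$ does not destroy integrability. The compact support of $\mathrm m$ is what resolves this. If one instead wanted an $x$-dependent growth for $\chi$, the jump integral would feed back into the Gronwall loop, because $\mathbb E[|X_s|^\ell\lambda_s]$ would appear. That term would then have to be decoupled by Hölder's inequality, using the fact that all moments of $\lambda$ are available from Lemma \ref{lm: lam_mom_bound}.
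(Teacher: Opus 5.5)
Your proof is correct, and it takes a genuinely different route from the paper.

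The paper applies It\^o's formula to $|X_t|^\ell$. It bounds the drift, diffusion and jump increments by $C(1+|X_t|^\ell+|\int\chi\,\mathrm m(dz)|^\ell)$ and closes with Gronwall on $t\mapsto\mathbb E|X_t|^\ell$. You instead work with the integral form, the power-mean inequality and BDG. Your key observation is that, because $\tilde\chi$ is bounded, the whole jump integral is dominated pathwise by the $X$-free functional $J_T$, whose moments follow exactly as in Lemma \ref{lemmavarsigma}.

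This buys you three things the paper's write-up lacks.

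First, $X$ and the intensity never become coupled. In the It\^o route the jump increments of $|X|^\ell$ are compensated against $\lambda_t\,dt\,\mathrm m(dz)$. This produces terms such as $\mathbb E[\lambda_t|X_t|^{\ell-1}]$, which the paper's final inequality does not account for and which would need a H\"older decoupling through Lemma \ref{lm: lam_mom_bound}.

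Second, you handle the singular control by $\sup_{t\le T}|\xi_t|\le\xi_T$, whereas the paper never estimates the $\ell X_{t-}|X_{t-}|^{\ell-2}\,d\xi_t$ contribution. Your hypothesis $\mathbb E[\xi_T^\ell]<\infty$ is not a weakness but a necessary correction of the statement. Singular controls carry no integrability requirement there, and the later admissibility condition only controls $\mathbb E\int e^{-rs}\phi(s)\,d\xi_s$. An adapted jump such as $\xi_t=|W_{t_0}|^{-1}\mathbf 1_{\{t>t_0\}}$ already gives $\mathbb E|X_t|=\infty$ for $t>t_0$, e.g.\ with the linear drift of Section \ref{secnumeric}. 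Since your Jensen reduction for $\ell\in[1,2)$ invokes the case $\ell=2$, in that range you are really assuming $\mathbb E[\xi_T^2]<\infty$.

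Third, BDG applied to the running supremum gives $\mathbb E\sup_{t\le T}|X_t|^\ell<\infty$. This is what the later dominated-convergence arguments (Proposition \ref{prop:VL_to_V}, Theorem \ref{thm:verif}) actually require, while the It\^o route as written only yields $\sup_{t\le T}\mathbb E|X_t|^\ell$.

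The It\^o approach is lighter in machinery, since it needs no Kunita inequality, and it extends directly to $x$-dependent jump sizes. To be made rigorous, however, it needs precisely the ingredients you supply.
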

\begin{proof}
This proof extends \cite[Theorem 2.2]{khabou2025markov} from the setting of square integrability to the broader framework of singular controls and arbitrary integrability orders. Applying It\^o's formula for jump-diffusions to $|X_t|^\ell$ we get
\begin{equation*}
    d|X_t|^{\ell} = \ell X_{t-}|X_{t-}|^{\ell-2}dX_t + \frac{1}{2}\ell(\ell-1)|X_{t-}|^{\ell-2}d\langle X^c, X^c\rangle_t + \sum_{\Delta X_t>0} (|X_t|^\ell - |X_{t-}|^\ell - \ell X_{t-}|X_{t-}|^{\ell-1} \Delta X_t)
\end{equation*}
From Assumption \textbf{(L)}, note that
\begin{equation*}
    \ell X_{t-}|X_{t-}|^{\ell-2} \mu( X_t) \leq \ell|X_t|^{\ell-1}|\mu( X_t)| \leq \ell|X_t|^{\ell-1} c(1 + |X_t|).
\end{equation*}

Hence,
\begin{equation*}
    \ell|X_t|^{\ell-1}|\mu( X_t)|  \leq c (1 + |X_t|^\ell ).
\end{equation*}
Now note that
$$\frac{\ell(\ell-1)}{2}|X_t|^{\ell-2}|\sigma(X_t,\beta_t)|^2 \leq C(1 + |X_t| )^2 \leq C\big(1 + |X_t|^\ell \big).$$
Therefore,
\begin{align*}
    |X_t|^\ell - |X_{t-}|^\ell &\leq \ell|X_{t-}|^{\ell-1}\Bigg|\int_{\mathbb{R}_+} \chi( X_t,  z,\gamma_t)dz \Bigg| + C\Bigg|\int_{\mathbb{R}_+} \chi( X_t,  z,\gamma_t)\text{m}(dz) \Bigg|^\ell \\
    &\leq C \Bigg(1 + |X_{t-}|^\ell + \Bigg|\int_{\mathbb{R}_+} \chi( X_t,  z,\gamma_t)\text{m}(dz) \Bigg|^\ell\Bigg).
\end{align*}

Consequently $\mathbb{E}\big[|X_t|^\ell\big] \leq \mathbb{E}\big[|X_0|^\ell\big] + C\int_0^t \mathbb{E}\big[1 + |X_s|^\ell \big]ds < \infty.$
By applying Gronwall Lemma, we deduce that $\sup_{t\leq T}\mathbb{E}[|X_t|^\ell]<\infty$.
\end{proof}

\subsection{Stochastic optimal mixed stopping-singular control problem}\label{sec:controlcontinuous}

Motivated by project management and investment problems, we assume that if the value of the project hits a boundary given by a function of its current value, $F$, a firm decides to stop the project; that is, the project stops at time $\tau$ if the value falls below $F(X_\tau)$. Otherwise, the project continues until maturity $T>0$ and the firm receives a value of $G(X_T)$.
We assume that the agent may inject additional monetary support, $\xi$, throughout time at their discretion, incurring cost $\phi$ each time. 
Further, the agent is subject to a running $K$ related to the controls $\beta \text{ and }\gamma$. 
Finally, there is a cost $\kappa$ incurred every time the Hawkes process jumps. 
Specifically, we define the cost functions as follows:
$$F \ : \ \mathbb{R} \to \mathbb{R}, \quad G \ : \ \mathbb{R} \to \mathbb{R}, \quad \phi \ : [0, T] \to \mathbb{R}_+, \quad K \ : \ [0,T]\times\mathbb{R}\times B\times \Gamma\to\mathbb{R}, \quad \kappa \ : \ [0,T]\times\mathbb{R}\times\mathbb{R}_+\to\mathbb{R}$$
such that for any $(t,x,b,g,z)\in[0,T]\times\mathbb{R}\times B\times\Gamma\times\mathbb{R}_+$
\begin{align*}
    \textbf{(Polx)} \ : \quad &F,G\in \mathbb{R}[x], \\
    \textbf{(Poly)} \ : \quad & |K(t,x,b,g)|+|\kappa(t,x,z)|\leq C(1+|x|^p), \ \ p\geq 1, \\
    \textbf{(Coef)} \ : \quad & \forall\mathfrak{K}>0 \sup_{(t,x)\in[0,T]\times[-\mathfrak{K},\mathfrak{K}]} \int_0^\infty |\kappa(t,x,z)|\text{m}(dz) < \infty,
\end{align*}
where $\mathbb R[x]$ denotes the space of real valued polynomials. The optimization problem is 
\begin{equation}\label{controlpb-continuous}
    V_0(x, \overline\varsigma):=\sup_{(\nu;\tau,\xi)\in  \mathcal V\times \mathcal{T}_{[0,T]}\times \Xi} J_0(x, \overline \varsigma;\nu,\tau,\xi),
\end{equation}
where for any $(x, \bar{\varsigma};\nu,\vartheta)\in \mathbb R\times \mathbb R^n\times \mathcal V\times \mathcal T_{[0,T]}\times \Xi$ 
\begin{align*}
    J_0(x, \bar{\varsigma};\nu,\vartheta)&:=
    \mathbb{E}\Big[ e^{-r\tau} F(X_\tau)\mathbf{1}_{\tau<T} 
    +e^{-rT}G(X_T)\mathbf{1}_{\tau\geq T} \\
    &\qquad-\int_0^{\tau\wedge T} e^{-rs}K(s,X_s,\beta_s,\gamma_s)ds -\int_0^{\tau\wedge T}e^{-rs} \phi(s) d\xi_s\\
    &\qquad- \int_0^{\tau\wedge T}\int_{\mathbb{R}_+} e^{-rs}\kappa(s, X_s, z)\mathbf{1}_{\theta\leq \lambda_\infty(t) + \bar{d}\bar{\varsigma}^\top_s}\Pi(ds,d\theta,dz)
    |X_0=x,\bar{\varsigma}_0=\bar{\varsigma}\Big],
\end{align*}
with $\vartheta=(\tau,\xi)\in\mathcal{T}_{[0,T]}\times \Xi$.
\paragraph{Interpretations.} The first and second terms represent the profit made with or without early stopping of the project, respectively. The third term represents the continuous time cost generated by the choice of controls ($\beta,\gamma$). The final two terms represent the costs related to singular actions of the agent and accidents in the project value (as soon as $\chi$ is for example a negative function), respectively.\newline

Using H\"older Inequalities together with Lemmas \ref{lm: lam_mom_bound} and \ref{lm: X_moments}, note that \[\mathbb{E}\Bigg[\int_0^T\int_0^\infty\int_{\mathbb{R}_+} |\kappa(s, X_s, z)|\textbf{1}_{\theta\leq\lambda_\infty(s) + \bar{d}\bar{\varsigma}^\top_s}\ \text{m}(dz)d\theta ds \Bigg] < \infty.\] Therefore, the objective function $J_0$ can be simplified as follows

\begin{align*}
    J_0(x, \bar{\varsigma};\nu,\vartheta)&:=
    \mathbb{E}\Big[ e^{-r\tau} F(X_\tau)\mathbf 1_{\tau<T} 
    +e^{-rT}G(X_T)\mathbf{1}_{\tau\geq T}  \\
    &\qquad -\int_0^{\tau\wedge T}e^{-rs} \phi(s) d\xi_s-\int_0^{\tau\wedge T} e^{-rs}\mathcal{K}(s,X_s, \bar{\varsigma}_s,\text{m},\nu_s)ds \ | \ X_0=x,\bar{\varsigma}_0=\bar{\varsigma}\Big],
\end{align*}
where $\mathcal{K}(s, X_s, \bar{\varsigma}_s, \text{m}, \nu_s) =  \int_{\mathbb{R}_+} \kappa(s, X_s, z)(\lambda_\infty(s) + \bar{d}\bar{\varsigma}^\top_s) \ \text{m}(dz)- K(s,X_s, \nu_s) $.

\begin{definition}[Admissible control]
    A control $\nu=(\beta,\gamma)\in \mathcal V$ and $\vartheta:=(\tau,\xi)\in \mathcal T_{[0,T]}\times \Xi$ is said admissible if 
    \[\mathbb E\Big[ \int_0^{T\wedge \tau} e^{-rs}[\phi(s)d\xi_s +|\mathcal{K}(s,X_s,\bar{\varsigma}_s,\emph{m},\nu_s)|ds]\Big]<\infty.\]
\end{definition}
We introduce the following variational inequality with gradient constraint:
\begin{equation*}\label{VPDE}
\textbf{(VPDE)}\begin{cases}
        \min\left\{ \partial_tv- rv + \sup_{b,g\in B\times \Gamma}\mathcal L^{b,g}[v];\partial_xv- \phi(t); v - F(x)\right\}=0,\; t<T \\
    v(T,x, \bar{\varsigma}) = G(x),\; (x,\bar{\varsigma})\in \mathbb R\times \mathbb R^n_+.
\end{cases}\end{equation*}
where 
\begin{align*}
    \mathcal{L}^{b,g}[v](t, x, \bar{\varsigma},\text{m})&:=\mu(x)\partial_xv(t,x,\bar{\varsigma}) + \tfrac{1}{2}|\sigma(x,b)|^2\partial_{xx}v(t,x,\bar{\varsigma}) - \mathcal{K}(t,x,\bar{\varsigma}, \text{m},b,g) \\
    &- \sum_{k=1}^n q_k\varsigma^k\partial_{\varsigma^k}v(t,x,\bar{\varsigma})\\
    &+ (\lambda_\infty(t) + \bar{d}\bar{\varsigma}^\top) \int_{\mathbb{R}_+} [v(t, x + \chi(x, z,g), \bar{\varsigma}+\varrho(z)\textbf{e}_n) - v(t, x, \bar{\varsigma})] \ \text{m}(dz)
\end{align*}
and $\textbf{e}_n = (1,\cdots,1)\in\mathbb{R}^n$.

\begin{remark}[Exponential Kernel dimension 1]
    Assume that $n=1$. Then \textbf{(VPDE)} becomes
$$(VPDE)_1\begin{cases}
        \min\left\{ \partial_tv - rv + \sup_{b,g}\mathcal L^{b,g}[v]; \phi(t) - \partial_xv; v - F(x)\right\}=0 \\
    v(T,x,\lambda) = G(x),\; (x,\lambda)\in \mathbb R\times \mathbb R_+.
\end{cases}$$
\begin{multline*}
    \mathcal L^{b,g}[v](t,x,\lambda, \emph{m}):=  \tfrac{1}{2}|\sigma(x,b)|^2\partial_{xx}v(t,x,\lambda) +\mu(x)\partial_xv(t,x,\lambda) - \mathcal{K}(t,x,\lambda, \emph{m},b,g)+ q\partial_\lambda v(\lambda_\infty - \lambda) \\
    + \lambda \int_{\mathbb{R}_+} [v(t-, X_{t-} + \chi( x,  z,g), \lambda+\varrho) - v(t, x, \lambda)] \ \emph{m}(dz). \\
\end{multline*}
\end{remark}

\begin{remark}
    An example of such functions $F,G,\mathcal K$ is given in the application Section \ref{secnumeric}.
\end{remark}

We introduce the dynamic version of the objective and value function below. 
\begin{multline*}
    J_t(x, \bar{\varsigma};\nu,\vartheta):=
    \mathbb E\Big[ e^{-r(\tau-t)} F(X^{t,x}_\tau)\mathbf 1_{\tau<T} 
    +e^{-r(T-t)}G(X_T)\mathbf 1_{\tau\geq T} 
    -\int_t^{\tau\wedge T}e^{-r(s-t)} \phi(s) d\xi_s \\
    -\int_t^{\tau\wedge T} e^{-r(s-t)}\mathcal{K}(s,X_s,\bar{\varsigma}_s, \text{m},\nu_s)ds 
    |X_t=x,\bar{\varsigma}_t=\bar{\varsigma}\Big]
\end{multline*}
\begin{equation*}
    V_t(x, \bar{\varsigma}):=\sup_{(\nu,\vartheta)\in\mathcal V\times   \mathcal{T}_{[t,T]}\times \Xi} J_t(x, \bar{\varsigma};\nu,\vartheta),
\end{equation*}

Extending \cite[Theorem 2.1]{an2010combined} to mixed stopping singular control with a Hawkes process, we have the following result.

\begin{theorem}[Supersolution]\label{thm:cont_supersolution}Assume that there exists a regular function $v\in \mathcal C^{1,2,1}([0,T]\times \mathbb R\times\mathbb{R}_+)$ once differentiable in time, twice continuously differentiable in space, and once differentiable in the intensity such that 
\begin{enumerate}
\item there exist $C< \infty$ and exponents $\ell, \ell', \mathfrak{r}, \mathfrak{r'}>0$ such that for all $t,x,\bar{\varsigma}$:
\begin{align*}
    \textbf{(Polv)} \ : \quad &|v(t,x,\bar{\varsigma})| \leq C\big(1 + |x|^\ell + \big\|\bar{\varsigma}^\top\big\|^\mathfrak{r}\big) \\
    \textbf{(Dvx)} \ : \quad &|\partial_x v(t,x,\bar{\varsigma})| \leq C\big(1 + |x|^{\ell-1} + \big\|\bar{\varsigma}^\top\big\|^\mathfrak{r}\big) \\
    \textbf{(Dvs)} \ : \quad &|\partial_{\varsigma^{(j)}}v(t,x,\bar{\varsigma})| \leq C|d_j|\Big(1 + |x|^{\ell'} + \big\|\bar{\varsigma}^\top\big\|^{\mathfrak{r}'}\Big) \quad \forall j\in \{1,\dots,n\},
\end{align*}

    \item $v(t,x, \bar{\varsigma}) \geq F(x)$ for all $(t, x, \bar{\varsigma})$
    \item $ \phi(t)\geq \partial_xv(t,x,\bar{\varsigma}) $ for all $(t,x,\bar{\varsigma})$
    \item $\partial_tv(t,x,\bar{\varsigma}) -rv(t,x,\bar{\varsigma}) + \sup_{b,g\in B\times \Gamma}\mathcal L^{b,g}[v](t,x,\bar{\varsigma},\emph{m}) \leq 0 $ for all $(t,x,\bar{\varsigma})\in [0,T]\times\mathbb R\times \mathbb R^n_+$,
    \item $v(T,x, \bar{\varsigma}) = G(x),$
\end{enumerate}
   then $v(t,x, \bar{\varsigma})\geq V_t(x, \bar{\varsigma})$ for any $(t,x,\bar{\varsigma})\in [0,T]\times \mathbb R\times \mathbb{R}_+^n$.
\end{theorem}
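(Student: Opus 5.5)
We follow the classical verification argument: fix $(t,x,\bar\varsigma)$ and an arbitrary admissible triple $(\nu,\tau,\xi)\in\mathcal V\times\mathcal T_{[t,T]}\times\Xi$, and show $J_t(x,\bar\varsigma;\nu,\tau,\xi)\le v(t,x,\bar\varsigma)$. Taking the supremum then gives the claim. The tool is It\^o's formula for semimartingales with jumps, applied to $s\mapsto e^{-r(s-t)}v(s,X_s,\bar\varsigma_s)$ on $[t,\theta_R]$. Here $\theta_R:=\tau\wedge T\wedge\inf\{s\ge t:|X_s|+\|\bar\varsigma_s\|\ge R\}$ is a localizing sequence. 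The singular control is split as $\xi=\xi^c+\xi^d$, its continuous and pure-jump parts.

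\textbf{Step 1 (It\^o expansion).} The expansion consists of several pieces:
\begin{itemize}
\item the $dt$ terms $\partial_t v-rv+\mu\partial_x v+\tfrac12\sigma^2\partial_{xx}v-\sum_k q_k\varsigma^k\partial_{\varsigma^k}v$;
\item the Brownian term $\int e^{-r(s-t)}\partial_xv\,\sigma\,dW_s$;
\item the Hawkes jump term $\int\!\!\int\!\!\int e^{-r(s-t)}[v(s,X_{s-}+\chi,\bar\varsigma_{s-}+\varrho(z)\mathbf e_n)-v(s,X_{s-},\bar\varsigma_{s-})]\mathbf 1_{\theta\le\lambda_s}\Pi(ds,d\theta,dz)$;
\item the singular terms $\int e^{-r(s-t)}\partial_xv\,d\xi^c_s+\sum_s e^{-r(s-t)}[v(s,X_s+\Delta\xi_s,\cdot)-v(s,X_s,\cdot)]$.
\end{itemize}
The Hawkes term is compensated using Lemma \ref{lemmaDM}, which produces exactly the integral part of $\mathcal L^{b,g}$ with intensity $\lambda_\infty(s)+\bar d\bar\varsigma_s^\top$. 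Adding and subtracting $\mathcal K$ then rewrites the drift as $\partial_tv-rv+\mathcal L^{\beta_s,\gamma_s}[v]+\mathcal K$.

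\textbf{Step 2 (use the inequalities).} Condition 4, applied with $(b,g)=(\beta_s,\gamma_s)\le\sup$, bounds the drift by $\mathcal K(s,X_s,\bar\varsigma_s,\mathrm m,\nu_s)$. Condition 3, through the mean value theorem for the jumps of $\xi$ (recall $\Delta\xi\ge0$), bounds both singular contributions by $\int e^{-r(s-t)}\phi(s)\,d\xi_s$. Taking expectations, the stochastic integrals stopped at $\theta_R$ vanish. This yields
\[
v(t,x,\bar\varsigma)\ge \mathbb E\Big[e^{-r(\theta_R-t)}v(\theta_R,X_{\theta_R},\bar\varsigma_{\theta_R})-\int_t^{\theta_R}e^{-r(s-t)}\phi(s)\,d\xi_s-\int_t^{\theta_R}e^{-r(s-t)}\mathcal K\,ds\Big].
\]

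\textbf{Step 3 (remove localization).} Let $R\to\infty$, so that $\theta_R\to\tau\wedge T$. The integral terms converge by monotone/dominated convergence thanks to admissibility. For the terminal term we use uniform integrability. By \textbf{(Polv)}, $|v(\theta_R,X_{\theta_R},\bar\varsigma_{\theta_R})|\le C(1+\sup_s|X_s|^\ell+\sup_s\|\bar\varsigma_s\|^{\mathfrak r})$, and the $\bar\varsigma$ part is controlled by Lemma \ref{lemmavarsigma}. Once the limit is taken, conditions 2 and 5 give $v(\tau,X_\tau,\cdot)\ge F(X_\tau)$ on $\{\tau<T\}$ and $v(T,X_T,\cdot)=G(X_T)$ otherwise, which yields $v\ge J_t$.

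\textbf{Main obstacle.} The delicate point is precisely this uniform integrability. Lemma \ref{lm: X_moments} only provides $\sup_t\mathbb E|X_t|^\ell$, not $\mathbb E\sup_t|X_t|^\ell$, and the singular control $\xi$ enters $X$. We would first upgrade Lemma \ref{lm: X_moments} to a supremum bound, using BDG/Kunita inequalities together with a moment assumption on $\xi_T$ implied by admissibility. If that upgrade is not available, we would restrict attention to controls with $\mathbb E[\xi_T^{\ell+1}]<\infty$ and argue by approximation. A secondary issue is checking that the local martingale terms are true martingales after stopping. This follows from \textbf{(Dvx)}, \textbf{(Dvs)}, the linear growth in \textbf{(L)}, and the moment bounds of Lemmas \ref{lm: lam_mom_bound}–\ref{lemmavarsigma}, since on $[t,\theta_R]$ the state is bounded before the last jump.
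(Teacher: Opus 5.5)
Your skeleton is the same as the paper's proof:
\begin{itemize}
\item It\^o's formula for $e^{-r(s-t)}v(s,X_s,\bar\varsigma_s)$;
\item compensation of the Hawkes integral through Lemma~\ref{lemmaDM};
\item the mean-value bound $v(s,X+\Delta\xi_s,\cdot)-v(s,X,\cdot)\le\phi(s)\Delta\xi_s$ from condition 3;
\item condition 4 for the drift;
\item vanishing of the martingale terms;
\item conditions 2 and 5 at $\tau\wedge T$.
\end{itemize}

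\textbf{The gap.} It is Step 3, which you leave open. Localizing at $\theta_R$ forces you to let $R\to\infty$ in $\mathbb E[e^{-r(\theta_R-t)}v(\theta_R,X_{\theta_R},\bar\varsigma_{\theta_R})]$. That needs uniform integrability in $R$. Fatou would instead need an integrable lower bound, and via condition 2 and the polynomial $F$ this again requires control of $\sup_s|X_s|$. Neither remedy you propose closes this:
\begin{itemize}
\item The upgrade you sketch rests on ``a moment assumption on $\xi_T$ implied by admissibility''. Admissibility only gives $\mathbb E\int_0^{\tau\wedge T}e^{-rs}\phi(s)\,d\xi_s<\infty$. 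That is at best a first moment of $\xi$, and none at all if $\phi$ is not bounded away from $0$. It does not give $\mathbb E[\xi_T^\ell]<\infty$.
\item The fallback ``restrict to $\mathbb E[\xi_T^{\ell+1}]<\infty$ and argue by approximation'' needs an approximating sequence of controls along which $J_t$ converges. You do not construct one, and any such sequence meets the same integrability problem.
\end{itemize}

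\textbf{How the paper avoids it.} The localization is what creates the difficulty, and the paper does without it. The Brownian term is a true martingale on $[t,T]$ once $\mathbb E\int_t^T|\partial_xv\,\sigma|^2ds<\infty$. The compensated Hawkes term is one once $\mathbb E\int_t^T\int|H_s(z)|\,\lambda_s\,\mathrm m(dz)\,ds<\infty$. By \textbf{(Dvx)}, \textbf{(Polv)}, \textbf{(L)} and H\"older, both conditions involve only time-integrated quantities such as $\sup_s\mathbb E|X_s|^p$, $\sup_s\mathbb E\|\bar\varsigma_s\|^p$ and $\sup_s\mathbb E\lambda_s^p$. Those are exactly what Lemmas~\ref{lm: X_moments}, \ref{lm: lam_mom_bound} and \ref{lemmavarsigma} provide.

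So apply It\^o directly on $[t,\tau\wedge T]$ and keep your pathwise inequalities from conditions 3 and 4. Unlike the paper's chain of equalities, these need no integrability of $\partial_tv$ or $\partial_{xx}v$, on which no growth is assumed. You get
\[
e^{-r(\tau\wedge T-t)}v(\tau\wedge T,\cdot)\le v(t,x,\bar\varsigma)+\int e^{-r(s-t)}\mathcal K\,ds+\int e^{-r(s-t)}\phi(s)\,d\xi_s+M_{\tau\wedge T},
\]
with $\mathbb E M_{\tau\wedge T}=0$ at the bounded stopping time. Conditions 2 and 5 then finish the proof, and no supremum bound on $X$ is ever used.

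\textbf{If you keep the localization.} The supremum bound you want actually follows from Lemma~\ref{lm: X_moments} as stated. Since $\xi$ is nondecreasing, $\sup_{s\le T}\xi_s=\xi_T$. Write $\xi_T$ as $X_T-x$ minus the drift, Brownian and jump parts; these are bounded in $L^\ell$ by BDG/Kunita and time-integrated moments. This gives $\mathbb E[\xi_T^\ell]<\infty$, hence $\mathbb E\sup_s|X_s|^\ell<\infty$, and dominated convergence in $R$ applies.

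Your underlying worry is nonetheless legitimate. The proof of Lemma~\ref{lm: X_moments} never estimates the $d\xi$ term. That lemma, and with it both arguments, therefore tacitly assumes more integrability of $\xi$ than admissibility provides.
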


\begin{remark}
    The first assumption is set to ensure that any local martingale is a true martingale while assumptions 2 to 5 are used for the supersolution property. 
\end{remark}

\begin{proof}[Proof of Theorem \ref{thm:cont_supersolution}]
Fix an admissible control $(\beta,\gamma;\tau, \xi)\in \mathcal V\times \mathcal T_{[0,T]}\times \Xi$. By Applying It\^o's formula to $e^{-r(s-t)}v(s, X_s, \bar{\varsigma}_s)$ we get

\begin{align*}
    &v(t,x,\bar{\varsigma})\\
    &= \mathbb{E}[e^{-r(\tau\wedge T - t)}v(\tau\wedge T, X_{\tau\wedge T}, \bar{\varsigma}_{\tau\wedge T})]\\
    &- \mathbb{E}\Bigg[ \int_t^{\tau\wedge T}e^{-r(s-t)}\big(\partial_sv(s, X_s, \bar{\varsigma}_s)  - rv(s, X_s, \bar{\varsigma}_s) + \mathcal L^{\beta,\gamma}[v](s,X_s,\bar{\varsigma}_s\big)ds\Bigg] \\
    &-\mathbb E\Bigg[\int_t^{\tau\wedge T} e^{-r(s-t)}\mathcal{K}(s,X_s,\bar{\varsigma}_s, \text{m},\beta_s,\gamma_s)ds  \Bigg]\\
    & -\mathbb{E}\left[ \int_t^{\tau\wedge T} e^{-r(s-t)}\partial_xv(s, X_s, \bar{\varsigma}_s)\sigma(X_s,\beta_s) dW_s \right] - \mathbb{E}\left[\int_t^{\tau\wedge T}e^{-r(s-t)}\partial_xv(s, X_s, \bar{\varsigma}_s) d\xi_s^{(c)} \right] \\
    &  - \mathbb{E}\left[ \sum_{t\leq s\leq \tau\wedge T}e^{-r(s-t)}\big(v(s, X_{s-} + \Delta \xi_s, \bar{\varsigma}_{s-}) - v(s, X_{s-}, \bar{\varsigma}_{s-})\big) \right] \\
    &-\mathbb{E}\left[ \int_t^{\tau\wedge T} \int_{\mathbb{R}_+} e^{-r(s-t)}\underbrace{[v(s, X_{s-} + \chi(X_s, z,\gamma_s), \bar{\varsigma}_{s-} + \varrho(z)\textbf{e}_n) - v(s, X_{s-}, \bar{\varsigma}_{s-})]}_{H}\bar{N}(ds,dz) \right],
\end{align*}
where $\bar{N}$ is defined from Lemma \ref{lemmaDM}. From the the mean value theorem, there exists a process $\zeta$ with values between $X_{s-}+\Delta \xi_s$ and $X_{s-}$ such that
$$v(s, X_{s-} + \Delta\xi_s, \bar{\varsigma}_{s-}) - v(s, X_{s-}, \bar{\varsigma}_{s-}) = \Delta\xi_s \cdot \partial_xv(s, \zeta_s, \bar{\varsigma}_s).$$
Consequently the fifth line of the previous equality becomes the expectation of $\sum_{t\leq s\leq \tau\wedge T} e^{-r(s-t)}\Delta\xi_s \cdot \partial_xv(s, \zeta_s, \bar{\varsigma}_s)$. Next, we check the integrability of the last term $H$. Since $v$ is polynomial in $X$ and $\bar{\varsigma}$, by the first assumption, we deduce from Lemmas \ref{lm: lam_mom_bound}, \ref{lemmavarsigma}, and \ref{lm: X_moments} that $\int_{\mathbb{R}_+}|H| \text{m}(dz) <\infty$.
Additionally, by using \textbf{(Polv)} and \textbf{(L)} together with Lemma \ref{lm: X_moments}, we deduce that $Z$ defined by $Z_t:=\sigma(X_t,\beta_t) v(t,X_t,\bar\zeta_t)$ is in $\mathbb H^2$. Therefore
\begin{align*}
    v(t,x,\bar{\varsigma}) &= \mathbb{E}[e^{-r(\tau\wedge T - t)}v(\tau\wedge T, X_{\tau\wedge T}, \bar{\varsigma}_{\tau\wedge T})]\\
    &- \mathbb{E}\Bigg[ \int_t^{\tau\wedge T}e^{-r(s-t)}\big(\partial_sv(s, X_s, \bar{\varsigma}_s)  - rv(s, X_s, \bar{\varsigma}_s) + \mathcal L^{\beta,\gamma}[v](s,X_s,\bar{\varsigma}_s\big)ds\Bigg] \\
    &- \mathbb{E}\Bigg[\int_t^{\tau\wedge T} e^{-r(s-t)}\mathcal{K}(s,X_s,\bar{\varsigma}_s,\text{m},\nu_s)ds\Bigg]\\ 
    &  - \mathbb{E}\left[\int_t^{\tau\wedge T}e^{-r(s-t)}\partial_xv(s, X_s, \bar{\varsigma}_s) d\xi_s^{(c)} \right] \\
    &  - \mathbb{E}\left[ \sum_{t\leq s\leq \tau\wedge T}e^{-r(s-t)}\big(v(s, X_{s-} + \Delta \xi_s, \bar{\varsigma}_{s-}) - v(s, X_{s-}, \bar{\varsigma}_{s-})\big) \right].
\end{align*}

From assumptions 2 through 5 we then deduce that
$v(t,x, \bar{\varsigma})\geq \sup_{\beta,\gamma;\tau,\xi}J_t(x, \bar{\varsigma}; \beta,\gamma,\tau, \xi) = V_t(x, \bar{\varsigma})$.
\end{proof}

We now turn to the verification result. 

\begin{theorem}\label{thm:verif}
    Assume all the conditions of Theorem \ref{thm:cont_supersolution} are satisfied. Let $D = \{ (t,x,\bar{\varsigma}) \ : \ v(t,x) > F(x) \}$ be a continuation region. Suppose that there exists $\hat\nu=(\hat\beta,\hat\gamma)\in \mathcal V$ and $ \hat{\vartheta}=(\hat\tau,\hat\xi)$
    such that
    \begin{enumerate}[start=5]
        \item $\partial_tv(t,x,\bar{\varsigma}) -rv(t,x,\bar{\varsigma}) -\sum_{j=1}^nq_j\varsigma^{(j)}\partial_{\varsigma^{(j)}}v(t,x,\bar{\varsigma}) + \mathcal L^{\hat\nu}[v](t,x,\bar{\varsigma},\emph{m}) = 0$ for all $(t,x)\in D$,
        \item $(  \phi(t) - \partial_xv(t,x,\bar{\varsigma}))d\hat{\xi}^{(c)}_t = 0$ for all $t$, where $\hat{\xi}^{(c)}$ is the continuous part of $\hat{\xi}$,
        \item $\Delta_{\hat{\xi}}v(t_k, X_{t_k},\bar{\varsigma}_{t_k}) = \phi(t_k)\cdot\Delta\hat{\xi}(t_k)$ for all $t_k$ jumping times of $\hat{\xi}$,
        \item $v(T, x,\bar{\varsigma}) = G(x)$,
        \item $\hat{\tau} = \inf\{ t>0 : (t,x,\bar{\varsigma})\notin D \} < \infty$ a.s. for all $x\in \mathbb R, \ \bar{\varsigma}\in \mathbb R^n_+$,
        \item $\{ v(\tau, x,\bar{\varsigma}) \ : \ \tau\in\mathcal{T}, \tau\leq\hat{\tau} \}$ is uniformly integrable for all $x\in\mathbb R, \  \bar{\varsigma}\in \mathbb R^n_+$ .
    \end{enumerate}
    Then $V(s,x,\bar{\varsigma}) = v(s,x,\bar{\varsigma})$ for all $(s,x,\bar{\varsigma})\in[0,T]\times\mathbb{R}\times\mathbb{R}^n_+$ and $(\hat\nu,\hat{\vartheta})$ ar ethe optimal volatility, jumps, stopping and singular control strategies.
\end{theorem}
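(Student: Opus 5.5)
The upper bound $v\geq V_t$ is already available from Theorem \ref{thm:cont_supersolution}, since conditions 1--5 of that theorem are assumed. It therefore remains to show $v(t,x,\bar{\varsigma})\leq J_t(x,\bar{\varsigma};\hat\nu,\hat\vartheta)$, which forces $v=V_t$ and the optimality of $(\hat\nu,\hat\vartheta)$ at once. We apply It\^o's formula to $s\mapsto e^{-r(s-t)}v(s,X^{\hat\nu,\hat\xi}_s,\bar{\varsigma}_s)$ exactly as in the proof of Theorem \ref{thm:cont_supersolution}, but now between $t$ and a localized stopping time $\hat\tau_N:=\hat\tau\wedge T\wedge\inf\{s\geq t:|X_s|+\|\bar{\varsigma}_s\|\geq N\}$. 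On $[t,\hat\tau_N)$ the state stays in $D$, so condition 5 makes the $ds$-integrand $\partial_s v-rv+\mathcal L^{\hat\nu}[v]$ vanish. Here the $-\sum_j q_j\varsigma^{(j)}\partial_{\varsigma^{(j)}}v$ term must be matched carefully with its appearance inside $\mathcal L$, interpreting condition 5 as equality in the generator that appears in It\^o's formula. What survives is the running cost $\mathcal K$.

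Next we handle the singular part. Condition 6 replaces $\int\partial_x v\,d\hat\xi^{(c)}$ by $\int\phi(s)\,d\hat\xi^{(c)}_s$. For the jumps, condition 7 gives $v(s,X_{s-}+\Delta\hat\xi_s,\bar{\varsigma}_{s-})-v(s,X_{s-},\bar{\varsigma}_{s-})=\phi(s)\Delta\hat\xi_s$. So the $d\hat\xi$ contributions combine exactly into $\int_t^{\hat\tau_N}e^{-r(s-t)}\phi(s)\,d\hat\xi_s$. The stochastic integrals against $W$ and $\bar N$ are true martingales on $[t,\hat\tau_N]$, by the polynomial bounds \textbf{(Polv)}, \textbf{(Dvx)}, \textbf{(Dvs)} combined with Lemmas \ref{lm: lam_mom_bound}, \ref{lemmavarsigma}, \ref{lm: X_moments} and Lemma \ref{lemmaDM}, as in the supersolution proof. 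Taking expectations yields the identity
\[
v(t,x,\bar{\varsigma})=\mathbb E\Big[e^{-r(\hat\tau_N-t)}v(\hat\tau_N,X_{\hat\tau_N},\bar{\varsigma}_{\hat\tau_N})-\int_t^{\hat\tau_N}e^{-r(s-t)}\big(\phi(s)\,d\hat\xi_s+\mathcal K(s,X_s,\bar{\varsigma}_s,\mathrm m,\hat\nu_s)\,ds\big)\Big].
\]

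Then we let $N\to\infty$. By condition 9, $\hat\tau$ is a.s. finite, so $\hat\tau_N\to\hat\tau\wedge T$ a.s. The terminal term converges by the uniform integrability of condition 10 together with the continuity of $v$ and the right-continuity of the paths. The integral terms converge by monotone convergence for $d\hat\xi$, since $\phi\geq0$, and by dominated convergence for $\mathcal K$, using admissibility of the control. Finally we identify the terminal value. On $\{\hat\tau<T\}$, the definition of $\hat\tau$ as the exit time of $D$ together with condition 2 gives $v(\hat\tau,X_{\hat\tau},\bar{\varsigma}_{\hat\tau})=F(X_{\hat\tau})$, because the complement of $D$ is closed by continuity of $v-F$. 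On $\{\hat\tau\geq T\}$, condition 8 gives $v(T,X_T,\cdot)=G(X_T)$. Hence $v(t,x,\bar{\varsigma})=J_t(x,\bar{\varsigma};\hat\nu,\hat\vartheta)\leq V_t(x,\bar{\varsigma})$.

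I expect the main obstacle to be the treatment of the jumps of $\hat\xi$ at the exit time and the passage to the limit. A jump of $\hat\xi$ can push the state out of $D$, so one must settle whether $v(\hat\tau,\cdot)$ is evaluated before or after that jump; the left-continuity convention for $\Xi$ is what resolves it. The paper's hypotheses are stated loosely, e.g.\ $D$ is written without $\bar{\varsigma}$ and condition 5 double-counts the $\varsigma$-drift. I would therefore fix these conventions explicitly first and then rely on condition 10 for the limit.
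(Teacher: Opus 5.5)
Your proposal is correct and follows essentially the paper's route. Both arguments apply It\^o's formula under $(\hat\nu,\hat\vartheta)$, use conditions 5--7 to turn the expansion into $J_t(x,\bar{\varsigma};\hat\nu,\hat\vartheta)$, remove a localization in the limit, and invoke Theorem \ref{thm:cont_supersolution} for $v\geq V_t$. Where the paper cites dominated convergence with the moment lemmas and takes the terminal identification for granted, you use the uniform integrability of condition 10 and derive $v=F$ at $\hat\tau$ from the closedness of $D^c$; this only makes explicit what the paper leaves implicit.
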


\begin{proof}
Let $D$ be a continuation region as defined in the theorem statement above and let assumptions 1-10. be satisfied.
Fix the controls $(\hat{\nu}, \hat{\xi})$ and $\hat{\tau}$ as defined in the theorem and assumption 9.
We recall from the previous proof and from It\^o's formula together with assumptions made on $v$ in Assumption 1 
\begin{align*}
    v(t,x, \bar{\varsigma}) &= \mathbb{E}\left[e^{-r(\hat{\tau} - t)}F(X_{\hat\tau})\textbf{1}_{\hat{\tau}<T} + e^{-r(T-t)}G(X_T)\textbf{1}_{\hat{\tau}=T} - \int_t^{\hat{\tau}\wedge T}e^{-r(s-t)}\mathcal{K}(s, X_s, \bar{\varsigma}_s, \text{m},\hat\nu_s)ds \right . \\
    & \quad \left . - \int_t^{\hat{\tau}\wedge T}e^{-r(s-t)}\phi(s) d\hat{\xi}_s^{(c)} - \sum_{t\leq s_k\leq \hat{\tau}\wedge T}e^{-r(s-t)}\phi(s_k)\Delta\hat{\xi}(s_k) \right]\\
    &= \mathbb{E}\left[e^{-r(\hat{\tau} - t)}F(X_{\hat\tau})\textbf{1}_{\hat{\tau}<T} + e^{-r(T-t)}G(X_T)\textbf{1}_{\hat{\tau}=T} - \int_t^{\hat{\tau}\wedge T}e^{-r(s-t)}\mathcal{K}(s, X_s, \bar{\varsigma}_s,\text{m},\hat{\nu}_s)ds \right . \\
    & \quad \left . - \int_t^{\hat{\tau}\wedge T}e^{-r(s-t)}\phi(s) d\hat{\xi}_s \right].
\end{align*}

Let $n\in\mathbb{R}_+$ and let $\Theta_n = \inf\{ t \ : \ (X_t,\bar\varsigma_t) \in [-n,n]^{1+n} \}$; that is, $\Theta_n$ is a stopping time in a localization method.
We define
\begin{align*}
    v_n(t,x, \bar{\varsigma}) &= \mathbb{E}\Big[e^{-r(\hat{\tau} - t)}F(X_{\hat\tau})\textbf{1}_{\hat{\tau}<T\wedge\Theta_n} + e^{-r(T\wedge\Theta_n-t)}G(X_T)\textbf{1}_{\hat{\tau}=T\wedge\Theta_n}\\
    &\qquad- \int_t^{\hat{\tau}\wedge T\wedge\Theta_n}e^{-r(s-t)}\mathcal{K}(s, X_s, \bar{\varsigma}_s,  \text{m},\hat\nu_s)ds + \int_t^{\hat{\tau}\wedge T\wedge\Theta_n}e^{-r(s-t)}\phi(s) d\hat{\xi}_s \Big]
\end{align*}
As we take $n\to\infty$, then $\Theta_n\to\infty$ as well, meaning $\tau\wedge T\wedge\Theta_n \to \tau\wedge T$ and $T\wedge\Theta_n \to T$. From Assumptions \textbf{(Polx)}, \textbf{(Poly)}, and \textbf{(Coeff)} together with the Lemmas \ref{lm: lam_mom_bound}, \ref{lemmavarsigma}, and \ref{lm: X_moments} and the dominated convergence theorem, we deduce that $\lim_{n\to\infty} v_n(t,x,\bar{\varsigma}) = J_t(x, \bar{\varsigma};  \hat{\vartheta})\leq V_t(x,\bar{\varsigma})$. By using Theorem \ref{thm:cont_supersolution}, we then deduce that $V_t(x, \bar{\varsigma}) = v(t,x, \bar{\varsigma})$ with optimizers $\hat\nu$ and $\hat{\vartheta}$.
\end{proof}
\subsection{Restriction to a bounded domain}\label{sec:bounded}

We now define a constrained state space for later use in our Markov decision process.
In order to constrain the state space, we must introduce a term to the original dynamics that reflects the process back into the interior when the process hits either the upper or lower boundary value.
That is, we want the corresponding two-sided reflection map.
Let $L\in[0, \infty)$.
From \cite{burdzy2009skorokhod} and definition 1.2 of \cite{kruk2007explicit}, the Skorokhod map on $[-L, L]$ with $L\geq 0$ fixed, can be defined as follows.

\begin{definition}[Skorokhod problem]\label{def:2side_Skorokhod_map}

    Given $\Psi \in \mathcal{D}[0, \infty)$ there exists a unique pair of functions $(\Psi, \psi,\tilde \psi ) \in \mathcal{D}[0, \infty) \times \mathcal{BV}[0, \infty)$ that satisfy the following properties:
    \begin{enumerate}
        \item For every $t\in[0, \infty)$, $\Psi(t) = \tilde\psi(t) + \psi(t) \in [-L, L]$.
        \item $\psi$  decomposes as $\psi = \psi_\ell - \psi_u$ where $\psi_\ell, \psi_u \in \mathcal{I}[0,\infty)$ and satisfy
            \begin{equation*}
                \int_0^\infty \textbf{1}_{ \psi(s) > -L }d\psi_\ell(s) = 0 
                \quad \text{and} \quad
                \int_0^\infty \textbf{1}_{ \psi(s) < L }d\psi_u(s) = 0 .
            \end{equation*}
    \end{enumerate}
\end{definition}

We define the reflection processes $R^+_t$, $R^-_t$, and $\mathcal R^i_t,\; 1\leq i\leq n$ as componentwise nondecreasing, c\`adl\`ag, $\{\mathcal{F}_t\}$-adapted processes that satisfy
\begin{equation*}
    \int_0^{\tau\wedge T} \textbf{1}_{X^L_t > -L }dR^+_t = 0 \quad \quad \int_0^{\tau\wedge T}\textbf{1}_{ X^L_t < L }dR^-_t = 0
\end{equation*}
$$\int_0^{\tau\wedge T}\textbf{1}_{ \varsigma^{(i),L}_t < L }d\mathcal{R}^{(i)}_t = 0 \quad \forall i\in\{1,\dots,n\},$$
where $(X^L, \varsigma^{(i),L})$ satisfy the following for any control $\nu=(\beta,\gamma)\in \mathcal V$

\begin{equation}\label{eq:constrained_dyn}
    \begin{cases}
        dX^L_t = \Big[\mu( X^L_t) + \int_{\mathbb{R}_+}\int_{\mathbb{R}_+}\chi(t, X^L_t, z,\gamma_t)\textbf{1}_{\theta\leq\lambda_\infty(t)+\bar{d}(\bar{\varsigma}^L_t)^\top}d\theta\text{m}(dz)\Big]dt \\
        \quad\quad\quad\quad \ + \sigma( X^L_t ,\beta_t)dW(t)+d\xi(t)+ dR^+_t - dR^-_t \\
        d\varsigma^{(1),L}_t = -q_1\varsigma^{(1),L}_t dt + \int_{\mathbb{R}_+}\int_{\mathbb{R}_+} \varrho(z)\textbf{1}_{\theta\leq \lambda_\infty(t) + \bar{d}(\bar{\varsigma}^L_t)^\top} \Pi(dt, d\theta, dz) - d\mathcal{R}^{(1)}_t\\
         \vdots \\
         d\varsigma^{(n),L}_t = -q_n\varsigma^{(n),L}_tdt + \int_{\mathbb{R}_+}\int_{\mathbb{R}_+} \varrho(z)\textbf{1}_{\theta\leq \lambda_\infty(t) + \bar{d}(\bar{\varsigma}^L_t)^\top}\Pi(dt, d\theta, dz) - d\mathcal{R}^{(n)}_t,\\
       N(dt,dz)=\int_0^\infty \mathbf 1_{\theta\leq \lambda_\infty(t) + \bar{d}(\bar{\varsigma}^L_t)^\top} \Pi(dt,d\theta,dz)
    \end{cases}
\end{equation}
We note that in the case of the singular control, $\xi$, and Hawkes process, $N$, the bounded domain affects the processes as follows. If a jump is such that the end value lies outside the domain (i.e., a jump to $y > L$), then the jump triggers the reflection process and reflects back into the domain.
Now, we define $\tau_L = \inf \{ t\geq 0 \ : \ (X_t\notin [-L, L])\cup(\bar{\varsigma}_t\notin[0,L]^n) \}$. We define $J_{0,L}(x, \bar{\varsigma}; \nu, \vartheta)$ and $V_{0,L}(x, \bar{\varsigma})$ as follows:
\begin{multline*}\label{eq:Jl}
    J_{0,L}(x, \bar{\varsigma}; \nu,\vartheta) = \mathbb{E} \left[ 
    e^{-r\tau}F(X^L_\tau) \textbf{1}_{\{\tau < T \} \cap \{ \tau < \tau_L \}} 
    - e^{-r(T\wedge \tau_L)\}}G(X^L_{T\wedge \tau_L})\textbf{1}_{\{\tau= T \} \cup \{ \tau_L < \tau \}} \right . \\
    \left . - \int_0^{\tau\wedge T\wedge \tau_L}e^{-rs}\phi(s)d\xi_s 
    - \int_0^{\tau\wedge T\wedge\tau_L}e^{-rs}\mathcal{K}(s, X^L_s, \bar{\varsigma}^L_s, \text{m},\nu_s) ds \right]
\end{multline*}
    
\begin{equation}\label{eq:Vl}
    V_{0,L}(x, \bar{\varsigma}) = \sup_{(\nu,\vartheta)\in \mathcal V\times \mathcal T_{[0,T]}\times \Xi} J_{0,L}(x, \bar{\varsigma}; \nu,\vartheta) .
\end{equation}

We now want to show the convergence of the constrained state space problem to the unconstrained problem.
\begin{proposition}\label{prop:VL_to_V}
     $V_{0,L}(x,\bar{\varsigma})$ converges to $V_0(x,\bar{\varsigma})$ uniformly on compact subsets of $\mathbb{R}\times\mathbb{R}_+^n$ as $L\to\infty$.
\end{proposition}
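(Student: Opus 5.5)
The plan is to compare $J_{0,L}$ and $J_0$ control by control, and to show that the difference is controlled by the probability of exiting the box before maturity. The key device is the exit time $\tau_L$: on the event $\{\tau_L > T\}$, no reflection term $R^\pm$, $\mathcal R^{(i)}$ has acted. By pathwise uniqueness of the SDE \eqref{SDE:singular}, which follows from the Lipschitz conditions in \textbf{(L)}, the reflected pair $(X^L,\bar\varsigma^L)$ then coincides with the unconstrained pair $(X,\bar\varsigma)$ on $[0,T]$. The same holds up to $\tau_L$ in general. Hence, for any fixed admissible $(\nu,\vartheta)$, the integrands of $J_0$ and $J_{0,L}$ agree on $\{\tau_L> T\}$. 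Consequently,
\[
|J_0(x,\bar\varsigma;\nu,\vartheta)-J_{0,L}(x,\bar\varsigma;\nu,\vartheta)|\le \mathbb E\big[\Upsilon\,\mathbf 1_{\{\tau_L\le T\}}\big],
\]
where $\Upsilon$ is a sum of terms of the form $|F|+|G|$ evaluated at both processes, together with $\int_0^T|\mathcal K|ds$ and $\int_0^T\phi\,d\xi$ along both trajectories.

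Next, I would bound $\Upsilon$ uniformly in the controls. By \textbf{(Polx)} and \textbf{(Poly)}, $\Upsilon\le C\big(1+\sup_{t\le T}|X_t|^{p'}+\sup_{t\le T}|X^L_t|^{p'}+\sup_{t\le T}\|\bar\varsigma_t\|^{p'}\big)+\text{(singular cost terms)}$. The reflected process satisfies $|X^L|\le L$, but this bound is not uniform in $L$. Instead, I would use $|X^L_t|\le \sup_{s\le t}|X_s|$-type comparison, or simply note that $X^L=X$ before $\tau_L$, so that only the stopped terminal value differs. Cauchy--Schwarz then gives
\[
\mathbb E[\Upsilon\mathbf 1_{\tau_L\le T}]\le \|\Upsilon\|_{L^2}\,\mathbb P(\tau_L\le T)^{1/2}.
\]
Lemmas \ref{lm: lam_mom_bound}, \ref{lemmavarsigma} and \ref{lm: X_moments} (upgraded to a $\sup_t$ moment bound via BDG and Kunita's inequality, exactly as in Lemma \ref{lemmavarsigma}) give $\|\Upsilon\|_{L^2}\le C(1+|x|^{p'}+\|\bar\varsigma\|^{p'})$. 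Markov's inequality gives
\[
\mathbb P(\tau_L\le T)\le L^{-2}\,\mathbb E\Big[\sup_{t\le T}|X_t|^2+\sup_{t\le T}\|\bar\varsigma_t\|^2\Big].
\]
Both bounds are locally uniform in $(x,\bar\varsigma)$ because the constants in those lemmas depend polynomially on the initial data.

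Taking the supremum over controls on both sides yields
\[
|V_0-V_{0,L}|\le \sup_{(\nu,\vartheta)}|J_0-J_{0,L}|\le C_K L^{-1}
\]
on any compact set $K$, which gives the uniform convergence. The main obstacle is uniformity in the controls. The moment bounds must not depend on $\beta,\gamma$, which holds because $B$ and $\Gamma$ are compact and the constants in \textbf{(L)} are uniform. They must also not depend on the singular control $\xi$, and here the bounds fail unless $\xi$ is restricted. To handle this, I would first reduce to $\varepsilon$-optimal controls with $\mathbb E[(\int_0^T\phi\,d\xi)^2]$ bounded uniformly. This uses $\phi>0$ together with the polynomial growth of $F$ and $G$: any excessive injection costs more than it can earn, so for an $\varepsilon$-optimal strategy in either problem one may truncate $\xi$ at a level depending only on $K$ and $\varepsilon$. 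The estimate above is then carried out over this restricted class, and I would let $\varepsilon\to0$ only after $L\to\infty$.
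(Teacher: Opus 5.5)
Your route differs from the paper's, and is more careful than it. The paper proves $J_{0,L}(x,\bar\varsigma;\nu,\vartheta)\to J_0(x,\bar\varsigma;\nu,\vartheta)$ for each fixed control by dominated convergence. It then passes to the suprema using an $L^*$ that it tacitly takes independent of $(\nu,\vartheta)$, and it assumes optimizers exist. It never addresses uniformity in the controls or in $(x,\bar\varsigma)$. Your exit-time coupling, the bound $\mathbb E[\Upsilon\mathbf 1_{\{\tau_L\le T\}}]\le\|\Upsilon\|_{L^2}\,\mathbb P(\tau_L\le T)^{1/2}$, and the inequality $|\sup J_0-\sup J_{0,L}|\le\sup|J_0-J_{0,L}|$ isolate exactly what that exchange of limit and supremum requires. (The coupling $X^L=X$ on $[0,\tau_L)$ assumes \eqref{eq:constrained_dyn} is read with the $\Pi$-jump integral rather than its compensator in the drift. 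The text after that display intends this, and the paper's claim $X^L\to X$ uses the same reading.)

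The gap is the step you yourself flag as the main obstacle, namely control of $\xi$, and the justification you give for it fails. Since $\xi$ enters $X$ additively, $\|\Upsilon\|_{L^2}$ and $\mathbb P(\tau_L\le T)$ are bounded uniformly over controls only on a class where $\xi_T$ has uniformly bounded moments of order $>1$. Lemma \ref{lm: X_moments} gives nothing here, because its proof ignores $\xi$.

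You argue that $\phi>0$ together with the polynomial growth of $F,G$ makes large injections unprofitable, but polynomial growth works in the opposite direction. Take the setting of Section \ref{secnumeric} ($\mathcal K\equiv0$, affine drift $\alpha-\delta x$, constant volatility, jumps of size $-1$) with $r=0$, $\phi\equiv1$ and $G(x)=x^2$, which \textbf{(Polx)} allows. An injection of size $y$ at time $0$ shifts $X_T$ by exactly $ye^{-\delta T}$. Never stopping then gives a payoff growing like $e^{-2\delta T}y^2-y$, so $V_0=+\infty$ and no truncation level exists. The same happens for linear $G$ with slope large relative to $\phi$.

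Closing the gap needs an extra hypothesis making injection eventually unprofitable, e.g. $F,G$ of sublinear growth, or Lipschitz with a constant small relative to $\inf_t e^{-rt}\phi(t)$ once the drift's amplification is accounted for. It then needs two actual lemmas:
\begin{enumerate}
\item an a priori bound on $\mathbb E[\int_0^T\phi\,d\xi]$ for $\varepsilon$-optimal controls, uniform in $L$ and in $(x,\bar\varsigma)$ on compacts;
\item a truncation estimate showing that freezing $\xi$ once it reaches a level $M$ changes $J_0$ and $J_{0,L}$ by $o_M(1)$, uniformly in $L$. A first-moment bound alone does not give the $L^2$ or uniform-integrability control your Cauchy--Schwarz step uses.
\end{enumerate}
Alternatively, you could restrict $\Xi$ to singular controls with $\mathbb E[\xi_T^q]\le C$ for some $q>1$. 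Even when repaired, your constants depend on $\varepsilon$, so the rate $C_KL^{-1}$ is not obtained. You get convergence only by sending $L\to\infty$ first and then $\varepsilon\to0$.
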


\begin{proof}
Note that $\tau_L \to \infty$ as $L \to \infty$.
Additionally, $X^L$ and $\bar{\varsigma}^L$ converge pointwise and almost surely to $X$ and $\bar{\varsigma}$, respectively. By the dominated convergence theorem, Assumptions \textbf{(Polyx), (Poly), (Coef)}, and Lemma \ref{lm: X_moments} we get $J_{0,L}(x,\bar{\varsigma};\nu,\vartheta) \to J_0(x,\bar{\varsigma};\nu,\vartheta)$ as $L \to \infty$.
Consequently, \[\forall\epsilon,\; \exists L^*,\; \text{such that } \forall L\geq L^*, |J_{0,L}(x,\bar{\varsigma};\nu,\vartheta) - J_0(x,\bar{\varsigma};\nu,\vartheta)|\leq\epsilon.\] We now turn to the proof that $V_{0,L}(x,\bar{\varsigma}) \to V_0(x,\bar{\varsigma})$ as $L \to \infty$.
Let $\nu^*,\vartheta^*$ be optimal in \eqref{eq:Vl}. First, we show that $V_{0,L}(x,\bar{\varsigma}) \geq V_0(x,\bar{\varsigma}) - \epsilon$.
Recall that by the definition of a supremum, $\forall\epsilon>0$, $J_{0,L}(x,\bar{\varsigma};\nu^*, \vartheta^*) + \epsilon \geq V_{0,L}(x,\bar{\varsigma})$.
Additionally, by definition of optimality, $J_{0,L} (x,\bar{\varsigma}; \nu^*,\vartheta^*) \geq J_{0,L}(x,\bar{\varsigma};\nu^*, \vartheta)$ for all $\nu,\vartheta$.
Hence, since $J_{0,L}(x,\bar{\varsigma};\nu, \vartheta)$ converges to $J_0(x,\bar{\varsigma};\nu, \vartheta)$, $J_{0,L}(x,\bar{\varsigma}; \nu^*, \vartheta^*) \geq J_{0,L}(x,\bar{\varsigma};\nu^*, \vartheta)\geq J_0(x,\bar{\varsigma};\nu,\vartheta)-\epsilon \ \ \forall\nu,\vartheta$.
Further, from the definition of a supremum, we see 
\[V_{0,L}(x,\bar{\varsigma}) = J_{0,L}(x,\bar{\varsigma};\nu^*,\vartheta^*) \geq J_0(x,\bar{\varsigma}; \nu,\vartheta) - \epsilon = V_0(x,\bar{\varsigma}) - \epsilon \text{ for any $\nu, \vartheta$.}\] Now, we must prove that $V_0(x,\bar{\varsigma}) \geq V_{0,L}(x,\bar{\varsigma}) - \epsilon$. From the convergence of $J_{0,L}(x,\bar{\varsigma};\nu,\vartheta)$ to $J_0(x,\bar{\varsigma};\nu,\vartheta)$, we get $J_0(x,\bar{\varsigma};\nu, \vartheta) \geq J_{0,L}(x,\bar{\varsigma};\nu, \vartheta) - \epsilon$ for all $\nu,\vartheta$. Thus, $J_0(x,\bar{\varsigma};\nu^*,\vartheta^*) \geq J_0(x,\bar{\varsigma};\nu, \vartheta)$ for any $\nu, \vartheta$ and $J_0(x,\bar{\varsigma};\nu^*, \vartheta^*) + \epsilon \geq V_0(x,\bar{\varsigma}) $ for all $ \epsilon > 0$.
So, $J_0(x,\bar{\varsigma}; \nu^*,\vartheta^*) \geq J_0(x,\bar{\varsigma}; \nu, \vartheta) \geq J_{0,L}(x,\bar{\varsigma};\nu, \vartheta) - \epsilon$ for all $\nu,\vartheta$.
Then, $V_0(x,\bar{\varsigma}) = J_0(x,\bar{\varsigma}; \nu^*,\vartheta^*) \geq J_{0,L}(x,\bar{\varsigma}; \nu, \vartheta) - \epsilon = V_{0,L}(x,\bar{\varsigma}) - \epsilon$.
This leads us to conclude that $V_{0,L}(x,\bar{\varsigma}) \to V_0(x,\bar{\varsigma})$ as $L \to \infty$.
\end{proof}

\begin{remark}
    We note that since $\lambda_\infty(t) \leq \Lambda_\infty \ \forall t\in[0,T]$, by assumption \ref{as: L_inft_bound}, and $\bar{\varsigma}_L(t)\in[0,L]^n$, there exists $\Lambda_L < \infty$ such that $\lambda_L(t) = \lambda_\infty(t) + \bar{d}\bar{\varsigma}_L(t) \leq \Lambda_L$ for all $t\in[0,T]$. We note that specifically, one could set $\Lambda_L = \Lambda_\infty + nL\max_{k}d_k$.
\end{remark}

\section{Discrete Time Markov Chain Model}\label{sec:MDP}

The variational partial differential equation \textbf{(VPDE)} provides a characterization of the value function as the exact continuous-time solution of the control problem, see \cite{bensoussan2011applications}. However, the presence of a gradient constraint, arising from the singular control structure, renders the equation highly nonlinear and degenerate, typically leading to a free-boundary problem. From a numerical standpoint, this class of variational inequalities poses substantial difficulties: classical finite-difference or finite-element schemes must simultaneously approximate the solution and identify the active constraint region, and convergence guarantees generally rely on monotone and stable discretizations. In practice, this makes direct numerical resolution of the VPDE either computationally prohibitive or infeasible, especially in higher dimensions.
As a consequence, we adopt a discrete-time approximation framework to construct a numerically tractable scheme. Specifically, we rely on the Markov chain approximation method introduced by Kushner \cite{kushner1990numerical}, \cite{kushner1992numerical}, \cite{kushner1991numerical}. The core idea is to approximate the controlled diffusion by a suitably designed discrete-time Markov chain whose local transition moments match those of the original continuous-time process. This weak approximation preserves the key probabilistic structure of the dynamics while avoiding the direct discretization of the gradient constraint.
Building on this approximation, the original continuous-time control problem is reformulated as a discrete-time Markov decision process defined on the state space of the approximating chain. The associated dynamic programming equation is then well-defined and can be solved numerically using standard value iteration or policy iteration techniques, yielding a convergent approximation of the value function and optimal control as the discretization step tends to zero.

\subsection{Hawkes Process Discretization}\label{sec:hawkesdisc}
In order to define the discre-time Markov chain model, we must first define a discretization of the Hawkes process. Let $\bar{\varsigma}^h_i = (\varsigma_i^{(1),h},\dots, \varsigma_i^{(n),h})$ so that the discrete intensity is $\lambda_i^h := \lambda_\infty(t_n) + \bar{d}\bar{\varsigma}_i^{h\top}$. Since the Hawkes process kernel is a sum of exponentials, the process can be simulated exactly using the method introduced by Dassios and Zhang in \cite{dassios2013exact}. Let $\eta_j$ be the time of the $j^\text{th}$ jump. Then the exact update of $\bar{\varsigma}_i^h$ is computed via
$$\varsigma_{i+1}^{(k),h} = \varsigma_{i}^{(k),h}e^{-q_k\Delta t^h(x)} + \sum_{\eta_j\in[t_i, t_{i+1})} \varsigma_{\eta_j}^{(k),h},$$
and the corresponding intensity process is $\lambda_{i+1}^h = \lambda_\infty(t_{i+1}) + \bar{d}\bar{\varsigma}^h_{i+1}$, where 
 $\Delta t^h(x)$ is a time step defined rigorously later in the next section. The locally consistent expectation-based update is
\begin{equation*}
\mathbb{E}\big[\varsigma_{i+1}^{(k),h}\big] = \varsigma_i^{(k),h} e^{-q_k \Delta t^h(x)} + \mathbb{E}[\varrho(z)]\, \lambda_n^h \Delta t^h(x)\, \frac{1-e^{-q_k \Delta t^h(x)}}{q_k \Delta t^h(x)},
\end{equation*}
or equivalently
\begin{equation*}
    \mathbb{E}\big[\bar{\varsigma}_{i+1}^h\big] = \underbrace{\bar{\varsigma}_i^h\odot e^{-\bar{q}\Delta t^h(x)}}_{\bar{\varsigma}^\mathrm{decay}} + \mathbb{E}[\varrho(z)]\lambda_i^h\Delta t^h(x)\frac{1 - e^{-\bar{q}\Delta t^h(x)}}{\bar{q}\Delta t^h(x)},
\end{equation*}
where $\odot$, the exponential, and the division are all defined as Hadamard products. The continuous-valued $\bar{\varsigma}$-targets obtained above must be projected onto a finite $\mathbf{S}$-grid used by the chain and defined formally in Definition \ref{def:Sgrid}. For each component perform two-point linear splitting: if a target $\varsigma^{(k)}$ lies between grid points $s_{j}$ and $s_{j+1}$, assign weights $w_j^{(k)}$ with $ w_{j+1}^{(k)}=1-w_j^{(k)}$
so that the expected component equals the target. For the $n$-dimensional vector $\bar{\varsigma}$ we use the product-splitting rule.
Specifically, define the linear projection 
% 1. One-dimensional linear projection
$$\mathcal{P}^{\mathrm{lin}}(\varsigma^{(k)} \mid s_j, s_{j+1}) =
\begin{cases}
w_j^{(k)} := \frac{s_{j+1} - \varsigma^{(k)}}{s_{j+1}-s_j}, & \text{assigned to } s_j,\\[1mm]
w_{j+1}^{(k)} := \frac{\varsigma^{(k)} - s_j}{s_{j+1}-s_j}, & \text{assigned to } s_{j+1}.
\end{cases}$$
For our $n$-dimensional vector $\bar{\varsigma}$, we can use the product of the one-dimensional linear projections
$$\mathcal{P}_{\mathbf S}^{\text{no jump}}(\mathbf j' \mid \bar{\varsigma}) := \prod_{k=1}^n 
\mathcal{P}^{\mathrm{lin}}\Big(\varsigma^{(k)} \mid s^{(k)}_{j'_k}, s^{(k)}_{j'_k+1}\Big),$$
where \(s^{(k)}_{j'_k}, s^{(k)}_{j'_k+1}\) are the grid points bracketing \(\varsigma^{(k)}\) and $\mathbf{j}'$ is the vector containing the indices of the project grid points.  
This produces at most $2^n$ nonzero probabilities corresponding to the corners of the hypercube surrounding \(\bar{\varsigma}\), and preserves the expectation:
\[
\mathbb{E}_{\mathcal{P}_{\mathbf S}^{\text{no jump}}}[\bar{\varsigma}] = \bar{\varsigma}.
\]

Adding the marks in the modeling, we defined the transition probabilities for $\bar{\varsigma}$ by
% 3. Projection for k-jump events
\[
\mathcal{P}_{\mathbf S}^{(z)}(\mathbf j' \mid \bar{\varsigma}, z) := \prod_{k=1}^n 
\mathcal{P}^{\mathrm{lin}}\Big(\varsigma^{(k)} + \varrho(z) \;\Big|\; s^{(k)}_{j'_k}, s^{(k)}_{j'_k+1}\Big).
\]  
This also preserves the expectation:
\[
\mathbb{E}_{\mathcal{P}_{\mathbf S}^{(z)}}[\bar{\varsigma}] = \bar{\varsigma} + \varrho(z)\textbf{e}_n.
\]

\subsection{Discrete-time controlled Markov chain driven by Hawkes jumps}\label{sec:MCdisc}
First, we design the underlying Markov chain that will be used to approximate $X_t$. Let $h > 0$ be a discretization parameter. 
Using our constrained system dynamics from \eqref{eq:constrained_dyn}, we will assume without loss of generality that $L$ is an integer multiple of $h$. 
We define the state space of the Markov chain as $\mathbb{S}_L^h = \{x = kh \in\mathbb{R} : k\in\mathbb{Z} \}\cap[-L,L]$.
Let $\{ X_{i}^h : i \in\mathbb{N} \}$ be a discrete-time controlled Markov chain with state space $\mathbb{S}_L^h$. 
% \hl{Due to the existence of the reflection process, we introduce an extended state space, as in} \cite{amarjit_budhiraja2007convergent}, $S_{h}^+ = \{x = kh \in\mathbb{R} : k\in\mathbb{Z} \}\cap[-\ell - h,\ell + h]$.
Define $\Delta X_{i}^t = X_{i+1}^h - X_{i}^h$. At each time step, we can choose to either inject money via the singular control or else let the process diffuse. 
Define $\pi_{i}^h\in\{0,1, +, -\}$ as the action taken at step $i$ where
\begin{itemize}
    \item $\pi_i^h = 0$: diffusion,
    \item $\pi_i^h = 1$: injection of size $h$, 
    \item $\pi_i^h = +$ and $\pi_i^h=-$ correspond to  reflections via reflection process $R^+$ and $R^-$ respectively,
    \item $\pi_i^h = k_-$ corresponds to reflections of $\varsigma^{k}$ via reflection process $\mathcal R^{(k)}$, for $k\in \{1,\dots,n\}.$
\end{itemize}
Denote the injection amount by $\Delta \xi_{i}^h = h$. This choice of amount is standard in the literature and necessary for the rescaling procedure, \cite[Section 8.3]{kushner1992numerical}. Consequently, for $\pi_i^h=1$, we have $\Delta X_i^h = \Delta \xi_i^h = h$. For completeness, we define $\{Y_{i}^h\}$ in the same way as $\{X_{i}^h\}$ but it represents the process without the singular control, as defined in \eqref{eq:non-sing_dyn}. We detail how the system evolves given different possible actions $\pi$. For steps where $\pi_{i}^h=0$, the increment $\Delta X_{n}^h$ should diffuse as follows
$$ \Delta X_i^h \approx \int_{t_i^h}^{t_{i+1}^h} \mu(X_t) dt + \int_{t_i^h}^{t_{i+1}^h} \sigma(X_t,\beta_t) dW_t + \int_{t_i^h}^{t_{i+1}^h} \int_0^{\Lambda_L} \int_{\mathbb{R}_+} \chi( X_t, z,\gamma_t) \mathbf{1}_{\theta \le \lambda_t} \Pi(dt,d\theta,dz), $$
over small intervals.
We note that for the steps where $\pi_i^h\in\{-,+\}$, the process immediately reflects back into the state space $\mathbb{S}_L^h$ with a step of size $h$. That is, if $\pi_i^h=-$ then $ \Delta X_i^h = -h$ and if $\pi_i^h = +$ then $\Delta X_i^h = h$.
In general, for any $i$, one can write 
$$\Delta X_i^h = \Delta X_i^h \mathbf{1}_{\pi_i^h=0} + \Delta X_i^h \mathbf{1}_{\pi_i^h=1} - \Delta X_i^h \mathbf{1}_{\pi_i^h=-} + \Delta X_i^h \mathbf{1}_{\pi_i^h=+}.$$

\begin{definition}[Admissible Control $\pi^h$]
    Let $\pi^h = (\pi_1^h, \pi_2^h, \dots)$ and $\mathcal{F}_i^h = \sigma\{ X_j^h, \pi_j^h, j \le i\}$, the $\sigma$-algebra generated by $X^h$ and $\pi^h$. 
A control $\pi^h$ is admissible if:
\begin{itemize}
    \item $\pi_i^h$ is $\mathcal{F}_i^h$-adapted,
    \item For all $x \in \mathbb{S}_L^h$, $\mathbb{P}(X_{i+1}^h = x \mid \mathcal{F}_i^h) = p^h(X_i^h,x\mid \pi_i^h)$,
    \item $X_i^h \in \mathbb{S}_L^h$ for all $i$.
\end{itemize}
The set of admissible controls starting at $x$ is denoted $\mathcal{A}_x^h$.
\end{definition}

With the notation and underlying processes of the Markov chain now specified, we turn to the construction of the chain itself, namely the definition of its transition probabilities. These probabilities are chosen so that the discrete-time process ${X_i^h}$ is locally consistent with the underlying diffusion. More precisely, we require the following conditions to hold:

\begin{align}
    \mathbb{E}^{h,0}_{x,i}\left[\Delta X^h_{i}\right] &= \mu(x)\Delta t^h(x) + o(\Delta t^h(x)) \label{eq:local_consistency_E}\\
    \text{Var}^{h,0}_{x,i} (\Delta X^h_{i}) &= \sigma( x,\beta_{t_i^h})\Delta t^h(x) + o(\Delta t^h(x)) \label{eq:local_consistency_cov}
\end{align}
where $\mathbb{E}^{h,0}_{x,n}[\cdot]$ denotes the expectation with initial condition $x$, discretization $h$, current step $i$, and current action $\pi_i^h = 0$.

\begin{remark}
We note that for constrained processes with more intricate reflection behavior such as state-dependent reflection directions or magnitudes, additional local consistency conditions are required; see \cite[Section 5.7.2]{kushner1992numerical}. In the present setting, however, the reflection structure is simple, and, as in \cite{budhiraja2007convergent}, these additional conditions are automatically satisfied by construction.
\end{remark}

Now, we define the transition probabilities and time interval for the Markov chain.
To ensure the time discretization is independent of the controls, we use the method developed by Kushner and Dupis in section 5.2.1 of \cite{kushner1992numerical}.
Define 
$$
D(x) := \max_{\beta \in \mathcal{A}(x)} \sigma^2(x,\beta), \quad 
Q_h(x) := D(x) + h \mu(x), \quad 
\Delta t^h(x) := \frac{h^2}{Q_h(x)}.$$
Then diffusion probabilities for the Markov chain are defined thusly:
\begin{align*}
p_D^h(x,x+h|\beta) &= \frac{\frac12 \sigma^2(x,\beta) + h\, \mu^+(x)}{Q_h(x)}, \\
p_D^h(x,x-h|\beta) &= \frac{\frac12 \sigma^2(x,\beta) + h\, \mu^-(x)}{Q_h(x)}, \\
p_D^h(x,x|\beta) &= 1 - p_D^h(x,x+h) - p_D^h(x,x-h).
\end{align*}

To incorporate jumps we must include information from Section \ref{sec:hawkesdisc}. Since the Hawkes intensity evolves as a sum of exponentials driven by past marks, the conditional probability of a jump in the next interval is 
$\lambda_i^h \Delta t^h(x) + o(\Delta t^h(x))$ \cite{foschi2021measuring}.
Let
\[
p_{1,n} = e^{-\lambda_n^h \Delta t^h(x)} \lambda_n^h \Delta t^h(x)
\]
be the probability of 1 jump in a time step, as defined by the Poisson density.

\begin{definition}[Discrete jump probabilities $\mathcal{Q}_u$]
\label{def:Sgrid}Let $(\mathbb{R}_+, \mathcal B({\mathbb{R}_+}))$ be the mark space with mark law $\emph{m}(dz)$, 
and let $\chi_h(x,z,\gamma)$ be the jump map producing increments in the target space $(\mathbf{S}, \mathcal B_{\mathbf{S}})$. 
Partition $\mathbf{S}$ into $U$ disjoint measurable cells, $\{ C_1, C_2, \dots, C_U \}$, such that
\[ 
C_u \cap C_v = \varnothing \text{ for } u \neq v, \quad \bigcup_{u=1}^U C_u = \mathbf{S}.
\]
Each cell $C_u$ represents a set of possible jump increments of the process.
For a given spatial transition $x \to y$, define $u(x,y)$ as the unique index satisfying
\[
y - x \in C_{u(x,y)}.
\]
Then, the discrete jump probability associated with this transition is
\[
\mathcal{Q}_{u(x,y)}(x)
:= \emph{m}\big( \underbrace{\{ z \in \mathbb{R}_+ : \chi_h(x,z,\gamma) \in C_{u(x,y)} \}}_{\mathcal{Z}_{u(x,y)}(x)}\big),
\]
that is, the probability that a mark $z$ produces a jump increment lying in the same cell as $y-x$.
These probabilities satisfy
\[
\mathcal{Q}_u(x) \in [0,1], \qquad \sum_{u=1}^U \mathcal{Q}_u(x) = 1.
\]
\end{definition}
We now turn to the discretization of $\text{m}$ by selecting representative marks $\{z_\upsilon\}_{\upsilon=1}^\Upsilon$ where $\Upsilon>0$  
and weights $\{q_\upsilon(x)\}_{\upsilon=1}^\Upsilon$ are defined  below.

\begin{definition}[Voronoi Discretization of the Mark Measure]
Given a set of representative marks $\{z_\upsilon\}_{\upsilon=1}^\Upsilon \subset \mathfrak P$, let $\{V_\upsilon\}_{\upsilon=1}^\Upsilon$ be a measurable partition of the compact support $\mathfrak P$ such that $z_\upsilon \in V_\upsilon$ for all $\upsilon = 1, \dots, \Upsilon$. The discretized measure $\emph{m}^h$ is defined by:
\begin{equation*}
   \emph{m}^h(dz) := \sum_{\upsilon=1}^\Upsilon q_\upsilon(x) \delta_{z_\upsilon}(dz),
\end{equation*}
where $\delta_{z_\upsilon}$ is the Dirac measure at $z_\upsilon$ and the weights are given by the measure of the Voronoi cells:
\begin{equation*}
    q_\upsilon(x) = \emph{m}(V_\upsilon), \quad \upsilon = 1, \dots, \Upsilon.
\end{equation*}
By construction, $\sum_{\upsilon=1}^\Upsilon q_\upsilon(x) =\emph{m}(\mathfrak P) = 1$.
\end{definition}

Define the index set $\mathcal I_u := \{\upsilon : z_\upsilon \in \mathcal Z_u(x)\}$.
Then
$$\mathcal{Q}_u(x) = \sum_{\upsilon \in \mathcal I_u} q_\upsilon(x),
\qquad
\text{m}^h(dz) 
:= \sum_{\upsilon \in \mathcal I_u} 
q_\upsilon(x)\, \delta_{z_\upsilon}(dz).$$
Finally, for any integrable function $f$,
$$\int_{\mathcal Z_u(s,x)} f(z)\, \text{m}^h(dz) = \sum_{\upsilon \in \mathcal I_u} q_\upsilon(x)\, f(z_\upsilon).$$

We now turn to the one-step transition probabilities for the evolution of $(X^h,\bar{\varsigma}^h)$ defined by 

\begin{align*}
p^h\big((x,\mathbf j),(y,\mathbf j')\big)
&= p_{0,n}\ p_D^h(x,y)\ \mathcal P_{\mathbf S}^{\mathrm{no\ jump}}\!\big(\mathbf j' \mid \bar\varsigma^\mathrm{decay}\big) \\
&+ p_{1,n}\!
\int_{\mathcal Z_{u(x,y)}(s,x)}\!
\mathcal P_{\mathbf S}^{(z)}\big(\mathbf j' \mid \bar\varsigma,z\big)\,\emph{m}(dz) + o(\Delta t^h(x)).
\end{align*}
Or, a more practical discrete version
\begin{align}
\nonumber p^h\big((x,\mathbf j),(y,\mathbf j')\big)
&= p_{0,n}\; p_D^h(x,y)\; \mathcal{P}_{\mathbf S}^{\mathrm{no\ jump}}\!\big(\mathbf j' \mid \bar\varsigma^{\mathrm{decay}}\big) \notag \\
&\quad +\ p_{1,n}\ \sum_{\upsilon \,:\, \chi_h(x,z_\upsilon,\gamma)\in C_{u(x,y)}} 
       q_\upsilon(x)\; \mathcal P_{\mathbf S}^{(z_\upsilon)}\!\big(\mathbf j' \mid \bar\varsigma,z_\upsilon\big) + o(\Delta t^h(x)),\label{eq:one_step_trans}
\end{align}
where $\chi_h$ denotes any uniformly bounded approximation of the jump function 
$\chi$ satisfying $|\chi_h - \chi|\to 0$ as $h\to 0$. The singular control and reflection actions have deterministic outcomes, so their transition probabilities are
$$p^h(x,x+h|\pi=1) = 1, \quad 
p^h(L,L-h|\pi=-) = 1, \quad 
p^h(-L,-L+h|\pi=+) = 1.$$

\begin{remark}
    To alleviate notations, we omit the dependence with respect to $\varsigma$ but we keep in mind that 
    $p(x,x|\pi=k_-)=1$ corresponds to a transition on $\varsigma^k$ from $\varsigma^k$ to $\varsigma^k-h$, for $k\leq n$.
\end{remark}
For each $x$, define a family of interpolation intervals $\Delta t^h(x)$, as in the previously defined equation: $\Delta t^h(x) := \frac{h^2}{Q_h(x)}$.
Let's define
$$t^h_{0}=0, \quad \Delta t^h_{j}=\Delta t^h(X^h_{j}, \pi^h_{j}), \quad t^h_{i} = \sum_{j=0}^{i-1}\Delta t^h_{j}.$$   

\begin{remark}
    
Note that by section 5.6.2 of \cite{kushner1992numerical}, the controlled Markov chain $X_n^h$ is locally consistent as soon as $\Delta t^h(x)\to 0$ uniformly in $x$ and the controls as $h\to 0$ such that
\begin{enumerate}
    \item $p_D^h(x,y|\pi,\beta)$ is locally consistent with  \eqref{eq:local_consistency_E} and \eqref{eq:local_consistency_cov}
    \item there is $\psi^h = o(\Delta t^h(x))$ such that $p^h(x,y|\pi,\nu)$ can be written as in \eqref{eq:one_step_trans}.
\end{enumerate}
\end{remark}

We now define the reward and value functions for the controlled Markov chain. Let $t_N^h = T$ and let $\mathfrak{T} = \sup\{ i : t_i^h \leq \tau^h\}$ be the time step corresponding to the optimal stopping. 
\begin{multline*}
    J_0^h(x, \bar{\varsigma}; \nu^h,\vartheta^h, \pi^h) = \mathbb{E}\left[- \sum_{i=0}^{n\wedge \mathfrak{T}} e^{-rt_i^h}\left( \mathcal{K}(t_i^h, X_i^h, \bar{\varsigma}_i^h,   \text{m},\nu_i^h,)\Delta t_i^h + \phi(t_i^h)h\textbf{1}_{ \pi_i^h=1} \right) \right. \\
    \left. +  e^{-rT}G(X^h_T)\textbf{1}_{ n=N} + e^{-rt_\mathfrak{T}^h}F(X^h_\mathfrak{T})\textbf{1}_{n\geq \mathfrak{T}}\right]   
\end{multline*}

\begin{equation}\label{eq:Vh}
    V_0^h(x, \bar{\varsigma}) = \sup_{\nu^h, \vartheta^h, \pi^h} J_0^h(x, \bar{\varsigma}; \nu^h, \vartheta^h, \pi^h).
\end{equation}

\begin{theorem}[Discrete-time DPP with optimal stopping]\label{thm:DPP_discrete} Define, for every $i\in \{0, \dots, N-1\}$, the continuation operator 
\[
(\mathcal{L}_i^h V)(x,\bar\varsigma):=
\sup_{\nu,\vartheta}\left\{
-\mathcal{K}(t_i^h,x,\bar\varsigma,\emph{m},\nu^h)\Delta t_i^h
-\phi(t_i^h)h\,\mathbf{1}_{\{\pi=1\}}
+ e^{-r\Delta t_i^h}\mathbb{E}\!\left[V(x_{i+1}^h,\bar\varsigma_{i+1}^h)\mid x_i^h=x,\bar\varsigma_i^h=\bar\varsigma\right]
\right\}.
\]
The value functions satisfy the following backward-dynamic programming equation:
$$\begin{cases}
    \label{eq:Snell_DPP}
V_i^h(x,\bar\varsigma)=\max\Big\{\, e^{-r\Delta t_i^h}F(x), (\mathcal{L}_i^h V)(x,\bar\varsigma)\Big\}, \forall i\in\{0,\dots, N-1\}\\
V_N^h(x,\bar\varsigma)=e^{-r(T-t_N^h)}G(x)
\end{cases}$$
where the conditional expectation is taken under the controlled transition at time $i$.
\end{theorem}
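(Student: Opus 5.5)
We will prove Theorem \ref{thm:DPP_discrete} by the standard backward-induction argument for finite-horizon MDPs with an embedded stopping decision (the Snell-envelope argument). The underlying state $(X_i^h,\bar\varsigma_i^h)$ lives on the finite grid $\mathbb S_L^h\times\mathbf S$. The action set at each step is also finite after discretization: $\pi\in\{0,1,+,-,k_-\}$, with $(\beta,\gamma)$ ranging in the compact $B\times\Gamma$. The transition kernel $p^h$ of \eqref{eq:one_step_trans} depends only on the current state and action. Hence $(X^h,\bar\varsigma^h)$ is a controlled Markov chain, and the tail value from step $i$ depends only on $(i,x,\bar\varsigma)$.

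We first define $V_i^h$ as the value of the problem started at step $i$ from $(x,\bar\varsigma)$, with rewards discounted relative to $t_i^h$. At $i=N$ no decision remains, so $V_N^h=e^{-r(T-t_N^h)}G(x)$, which equals $G(x)$ since $t_N^h=T$. For the inductive step, fix $i<N$ and split any admissible policy according to the event $\{\mathfrak T=i\}$, which is $\mathcal F_i^h$-measurable.

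On $\{\mathfrak T=i\}$, the reward is the stopping payoff, which gives the term $e^{-r\Delta t_i^h}F(x)$ with the discount convention used in the statement.

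On the complement, the reward decomposes into three pieces:
\begin{itemize}
\item the one-step running cost $-\mathcal K\,\Delta t_i^h-\phi(t_i^h)h\mathbf 1_{\{\pi=1\}}$;
\item the discount factor $e^{-r\Delta t_i^h}$;
\item the conditional expectation, given $\mathcal F_{i+1}^h$, of the remaining reward.
\end{itemize}
By the Markov property and the tower property, this conditional expectation is bounded above by $V_{i+1}^h(X_{i+1}^h,\bar\varsigma_{i+1}^h)$. Taking expectations gives $J\le\max\{e^{-r\Delta t_i^h}F,\mathcal L_i^hV_{i+1}^h\}$, which is the "$\le$" inequality.

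For the reverse inequality, we build $\varepsilon$-optimal policies by concatenation. At step $i$, pick an action $a^\varepsilon(x,\bar\varsigma)$ that is $\varepsilon$-optimal in the continuation operator, or stop if $e^{-r\Delta t_i^h}F$ is larger. Then, from each possible next state, follow an $\varepsilon$-optimal policy for $V_{i+1}^h$. Because the state space is finite, this selection is trivially measurable and the pasted policy remains admissible in the sense of $\mathcal A_x^h$. This yields $V_i^h\ge\max\{\cdots\}-2\varepsilon$, and letting $\varepsilon\to0$ closes the argument. If the action set is genuinely finite, the supremum is attained and no $\varepsilon$ is needed.

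Integrability poses no difficulty: all quantities are bounded, because $F$, $G$ and $\mathcal K$ are polynomial (\textbf{(Polx)}, \textbf{(Poly)}) on the bounded set $[-L,L]\times[0,L]^n$.

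The main obstacle is bookkeeping rather than analysis. The interpolation times $\Delta t_i^h=\Delta t^h(X_i^h)$ are state-dependent, so "step $N$" and "time $T$" coincide only through the convention $t_N^h=T$. The discount factors must therefore be tracked as products $\prod_j e^{-r\Delta t_j^h}$ so that they telescope correctly in the induction; we will carry the time $t_i^h$ as part of the state if needed to keep the recursion Markov. We also need to check that the reflection actions $\pm$ and $k_-$, which are deterministic with zero running cost, are absorbed into $\mathcal L_i^h$ without changing the argument.
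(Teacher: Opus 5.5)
Your proposal is correct and takes essentially the paper's route: the paper writes no proof and simply defers to the standard finite-horizon dynamic programming results of Bertsekas and Bertsekas--Shreve, and your backward induction is that argument made explicit (Markov/tower-property upper bound, then $\varepsilon$-optimal concatenation on a finite state space augmented by $t_i^h$). The one thing you assert rather than derive is the factor $e^{-r\Delta t_i^h}$ on the stopping branch: with $J_0^h$ normalized at $t_i^h$, stopping at step $i$ pays $F(x)$, so that factor is a convention of the statement (harmless when $r=0$), not something your argument produces.
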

The proof of the theorem above is a slight extension of
\cite{bertsekas2012dynamic,bertsekas1996stochastic} to the case of a discretized
Hawkes-driven state process. Whenever the maximum in the dynamic programming
equation is attained at time step $i$ by the stopping payoff $F$, the optimal
stopping time is given by $\tau^{h}=t_i^h$. Otherwise, the system continues to
evolve according to the controlled dynamics, and the optimal control
$\hat{\nu}^h$ is selected so as to attain the supremum in the continuation
value.

\section{Convergence of the discrete to the continuous time optimizations}\label{sec:cv}
We now turn to the heart of this study and the convergence of value function $V^h$ in a discrete time framework to $V$ for the continuous time model. We starts to show that the MDP developed in Section \ref{sec:MDP} converges to the controlled SDE \eqref{SDE:singular}. We provide more particularly a tightness result  fundamental to derive the convergence of the value functions and the optimizers.

\subsection{Convergence of the MDP to the singular SDE}\label{sec:MDPtoSDE}
The convergence of the Markov decision process to the singularly controlled
stochastic differential equation is established in three main steps. First, we
construct a continuous-time interpolation of the discrete-time controlled
process and relate it to the original MDP dynamics. Second, we introduce an
appropriate time-rescaling procedure and analyze its key properties, ensuring
that the rescaled dynamics are consistent with the limiting singular control
regime. Finally, we prove that the family of processes associated with the MDP
is tight in the Skorokhod space, which allows us to see any limit point as a
candidate solution of the target singularly controlled SDE.
\subsubsection{Continuous time interpolation}
We construct the continuous time interpolation to be piecewise constant on $[t^h_{i}, t^h_{i+1}), \ i\geq 0$. 
First, we need to define the discrete time process. 
Let $\xi^h_{0} = B^h_{0} = M^h_{0} =H_0^h=R_0^h 0,$ and define:
\[
    \xi^h_{i}= \sum_{j=0}^{i-1}\Delta \xi^h_{j}, \;
    B^h_{i} = \sum_{j=0}^{i-1}\textbf{1}_{\pi^h_{j}=0}\mathbb{E}^h_{j} [\Delta X^h_{j}], \;
    M^h_{i} = \sum_{j=0}^{i-1}\textbf{1}_{\pi^h_{j}=0}\left(\Delta Y^h_{j} - \mathbb{E}^h_{j} [\Delta Y^h_{j}]\right), \]
    \[ 
    H_i^h = \sum_{j:\eta_j^h<i} \chi_h( X_{\eta_j^h}^h, \gamma_{\eta_j^h}^h, z_j),
\]
where $\eta_j^h$ is the discretized jumping time of $X$ and 
\[ R^h_{k,i} = \sum_{j=0}^{i-1}\textbf{1}_{\pi^h_{j}=k}\Delta X^h_{j}, \quad k\in\{+,-\},\; \mathcal R^{(k),h}_{i} = \sum_{j=0}^{i-1}\textbf{1}_{\pi^h_{j}=k_-}\Delta \varsigma^{(k),h}_{j},\; k\in \{1,\dots,n\}. \]

With the discrete time processes defined, we can now define the constant interpolations as:
\[\
        X^h_t:=X^h_{i^h(t)},\; B^h_t:=B^h_{i^h(t)}, \; M^h_t:=M^h_{i^h(t)}, \; H^h_t:=H^h_{i^h(t)}\]
     \[  \varsigma^{(k),h}_t := \varsigma^{(k),h}_{i^h(t)}, \;\beta^h_t := \beta^h_{i^h(t)},\; 
        \gamma^h_t := \gamma^h_{i^h(t)},\;
        \xi^h_t:=\xi^h_{i^h(t)},\;  R_{j,t}^h:=R^h_{j, i^h(t)}, \; \mathcal R_{t}^{(k),h}:=\mathcal R^{(k),h}_{i^h(t)},
\]where $i^h(t) = \max\{i \ : \ t^h_{i}\leq t\}, \ t\geq 0$.
We set $\mathcal{F}^h_{i^h(t)} = \sigma\{ X^h_s, \xi^h_s \ : \ s\leq t\} = \mathcal{F}^h_t$. Note that

 \[X^h_t = x + B^h_t + M^h_t + \xi^h_t + H^h_t + R^{h}_{+,t} - R^{h}_{-,t}+\varepsilon^h_t,\]
 
where $ \varepsilon^h_t$ is $\mathcal{F}^h_t$-adapted and 
$\lim_{ h \to 0 }\sup_{t\in[0, T]} \mathbb{E}|\varepsilon^h_t|=0.$
\begin{remark}
    Note that $\varsigma$ is already discretized from Section \ref{sec:hawkesdisc}. We will work under the same time interpolation with $\varsigma$ as previously defined but we omit to write the exact definition to alleviate the notation. 
\end{remark}
\subsubsection{Time Rescaling}
To prove the convergence of the discrete solution to the continuous solution, time rescaling is introduced and will later be implemented. 
We define the rescaled time increments $\{ \Delta\hat{t}^h_i \ : \ i\in\mathbb{N} \}$ via
$$\Delta \hat{t}^h_i = \Delta \hat{t}^h_i\textbf{1}_{ \pi^h_i = 0} + h\textbf{1}_{ \pi^h_i \neq 0}$$
with $\hat{t}_0 = 0$ and $\hat{t}_i = \sum_{j=0}^{i-1} \Delta \hat{t}^h_j$ for $i\geq 1$.
This time rescaling can be interpreted as a stretch of the time scale by $h$ at jump steps. Similarly to \cite[Definition 4.1]{budhiraja2007convergent}, we define the time rescaled process as follows. 

\begin{definition}\label{def:rescale_hatT}
     The rescaled time process $\hat{T}^h(\cdot)$ is the unique continuous nondecreasing process satisfying the following:
     \begin{enumerate}[label=(\alph*)]
        \item $\hat{T}^h(0) = 0$,
        \item the derivative of $\hat{T}^h(\cdot)$ on $(\hat{t}^h_i, \hat{t}^h_{i+1})$ is 1 if $\pi^h_i = 0$,
        \item the derivative of $\hat{T}^h(\cdot)$ on $(\hat{t}^h_i, \hat{t}^h_{i+1})$ is 0 if $\pi^h_i \in \{-, +, 1,1_-,\dots,n_-\}$.
     \end{enumerate}
     We define $\mathcal{S}^h:=\hat{T}^h(T)$.
\end{definition}

\begin{remark}\label{Tequicontinuous}
    Note that $\hat{T}^h$ is bounded, 1-Lipschitz and, consequently, uniformly equicontinuous.
\end{remark}

Let $\hat{i}^h(t) := \max \{ i \ : \ \hat{t}^h_i \leq t \}$ for $t\geq 0$. 
Let $\hat{X}^h_t:= X^h_{\hat{T}^h(t)}$ be the rescaled and interpolated process.
We define $\hat{\xi}^h, \hat{H}^h, \hat{B}^h, \hat{M}^h, \hat{\beta}^h, \hat{\gamma}^h, \hat{R}_j^h$ and $\hat{\mathcal{F}}^h$ similarly.
We get
\begin{align*}
    \hat{X}^h_t &= x + \int_0^t \mu(\hat{X}^h_s )d\hat{T}^h(s) + \int_0^t \sigma( \hat{X}^h_s, \hat{\beta}^h_s)d\hat{W}^h_s + \hat{\xi}^h_t  +\hat{H}^h_t+ \hat{R}_{+,t}^h - \hat{R}_{-,t}^h+ \hat{\varepsilon}^h_t \notag \\
    &= x + \hat{B}^h_t + \hat{M}^h_t + \hat{\xi}^h_t +\hat{H}^h_t+ \hat{R}_{+,t}^h - \hat{R}_{-,t}^h + \hat{\varepsilon}^h_t,
\end{align*}
where $\hat{\varepsilon}^h_t$ is $\hat{\mathcal{F}^h(t)}$-adapted and 
$$\lim_{h\to 0}\sup_{t\in [0,S^h]} \mathbb{E} |\hat{\varepsilon}^h_t| = 0 .$$ We define $\hat{\tau}^h = \hat{T}^h(\tau^h) \leq \mathcal{S}^h$. We now give some properties of the inverse function of the time rescaling process we will use later.  
\begin{lemma}\label{lm:measure_theory}
    Let $\hat{G} \ : \ [0,T] \to [0,\mathcal{S}^h]$ be a continuous and nondecreasing function with $\hat{G}(0)=0$ and $\hat{G}(T)=\mathcal S^h$.
    Define the generalized inverse $G \ : \ [0,\mathcal{S}^h] \to [0,T]$ as $G(t) = \inf\{ s \ : \ \hat{G}(s) > t \}$.
    Then for all bounded and measurable functions $g \ : \ [0,T] \to \mathbb{R}$,
$$\int_{0}^{G(t)} g(s)d\hat{G}(s) = \int_{0}^t g(G(s))ds .$$
\end{lemma}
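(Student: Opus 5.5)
The identity is a change of variables for Lebesgue–Stieltjes integrals. It says that the image of Lebesgue measure on $[0,t]$ under the generalized inverse $G$ coincides with the measure $d\hat{G}$ restricted to $[0,G(t)]$. I will prove it for indicators of intervals $[0,a]$ and then extend by a monotone class argument. Throughout I use the convention $\inf\emptyset = T$, so that $G(\mathcal{S}^h)=T$. Since $\hat{G}$ is continuous, the measure $d\hat{G}$ has no atoms, so open versus closed endpoints never matter. Since $G$ is nondecreasing, it is Borel measurable, and $g\circ G$ is bounded and measurable, so both sides are well defined.

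\textbf{Step 1 (basic properties of $G$).} First I check that $\hat{G}(G(t))=t$ for every $t\in[0,\mathcal{S}^h]$. For $t<\mathcal{S}^h$ the set $\{s:\hat{G}(s)>t\}$ is nonempty, and continuity of $\hat{G}$ at its infimum gives $\hat{G}(G(t))=t$. For $t=\mathcal{S}^h$ it follows from $\hat{G}(T)=\mathcal{S}^h$.

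Second, for every $a\in[0,T]$ and $s\in[0,\mathcal{S}^h]$ I check the sandwich
\[
\hat{G}(a)>s \;\Longrightarrow\; G(s)\le a \;\Longrightarrow\; \hat{G}(a)\ge s .
\]
The first implication holds by definition of the infimum. For the second, monotonicity of $\hat{G}$ gives $\hat{G}(a)\ge \hat{G}(G(s))=s$. Consequently
\[
[0,\hat{G}(a))\subseteq \{s\in[0,\mathcal{S}^h] : G(s)\le a\}\subseteq[0,\hat{G}(a)].
\]

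\textbf{Step 2 (indicators).} Take $g=\mathbf 1_{[0,a]}$. The left-hand side equals $\hat{G}(a\wedge G(t))$, which by monotonicity and Step 1 equals $\hat{G}(a)\wedge t$. The right-hand side is the Lebesgue measure of $\{s\le t: G(s)\le a\}$. By the sandwich in Step 1 this measure is also $\hat{G}(a)\wedge t$. So the identity holds for all $g=\mathbf 1_{[0,a]}$, $a\in[0,T]$.

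\textbf{Step 3 (extension).} Fix $t$. Let $\mathcal{C}$ be the class of Borel sets $A\subseteq[0,T]$ for which the identity holds with $g=\mathbf 1_A$.
\begin{enumerate}
\item $\mathcal{C}$ contains $[0,T]$, which is the case $a=T$ of Step 2.
\item $\mathcal{C}$ is closed under proper differences, by linearity of both sides.
\item $\mathcal{C}$ is closed under increasing unions, by monotone convergence on both sides. Both measures are finite, with total masses at most $\mathcal{S}^h$ and $t$ respectively.
\end{enumerate}
Hence $\mathcal{C}$ is a $\lambda$-system. It contains the $\pi$-system $\{[0,a]\}$, so by Dynkin's $\pi$–$\lambda$ theorem it contains all Borel sets. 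Linearity then gives the identity for simple functions. Every bounded measurable $g$ is a uniform limit of simple functions, and bounded (dominated) convergence on the two finite measures gives the general case.

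The only delicate point is Step 1, namely handling the generalized inverse correctly on the flat pieces of $\hat{G}$ and at the endpoint $\mathcal{S}^h$. The sandwich inclusion resolves it: the sets where the two inclusions differ form a Lebesgue-null set.
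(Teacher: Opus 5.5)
Your proof is correct. It runs the change of variables in the opposite direction from the paper.

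\textbf{The paper's route.} The paper sets $\mu=d\hat{G}$ and shows that the image of $\mu$ under $\hat{G}$ is Lebesgue measure on $[0,\mathcal{S}^h]$. It writes $\nu=\mu\circ G^{-1}$, but what it actually computes is $\mu\circ\hat{G}^{-1}$. It then applies the abstract image-measure formula in one line. That last step tacitly needs $G(\hat{G}(s))=s$ for $\mu$-a.e.\ $s$. On a flat stretch of $\hat{G}$, the map $G\circ\hat{G}$ sends $s$ to the right end of the stretch, so one must observe that these stretches carry no $d\hat{G}$-mass. The paper never states this.

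\textbf{Your route.} You identify the image of Lebesgue measure on $[0,t]$ under $G$ with $d\hat{G}$ restricted to $[0,G(t)]$. You check this on the $\pi$-system $\{[0,a]\}$ and extend by Dynkin. This uses only $\hat{G}\circ G=\mathrm{id}$, which holds everywhere by continuity, together with your sandwich $[0,\hat{G}(a))\subseteq\{G\le a\}\subseteq[0,\hat{G}(a)]$. So the flat-piece bookkeeping never arises.

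\textbf{Comparison.} The paper's version is shorter on the page. Yours is self-contained and fully rigorous, at the modest cost of the monotone-class step. Both arguments implicitly take $g$ Borel, which is the right reading of ``measurable'' here, since $d\hat{G}$ may charge Lebesgue-null sets.
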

 
\begin{proof}
Define the Lebesgue-Stieltjes measure $\mu$ on $[0,T]$ as $\mu((a,b]) = \hat{G}(b) - \hat{G}(a)$ for $a<b$. Additionally define the push-forward measure $\nu := \mu\circ G^{-1}$ on $[0,\mathcal{S}^h]$. For any $u\in[0,\mathcal{S}^h]$,
$$\nu((0,u]) \;=\; \mu\big(G^{-1}((0,u])\big) 
= \mu\big(\{s\in[0,T]:\ \hat{G}(s)\in(0,u]\}\big).$$
Since $\hat{G}$ is continuous and nondecreasing, the set $\{s:\hat{G}(s)\le u\}$ has supremum $s^*=\sup\{s:\hat{G}(s)\le u\}$, and consequently
$$ \mu\big(\{s:\hat{G}(s)\in(0,u]\}\big) 
= \hat{G}(s^*)-\hat{G}(0) = u. $$
Thus $\nu((0,u])=u$ for all $u\in[0,\mathcal{S}^h]$, which shows that $\nu$ coincides with Lebesgue measure on $[0,\mathcal{S}^h]$. Now, for bounded measurable $g$,
$$\int_{0}^{G(u)} g(s)\,d\hat{G}(s)
= \int_{[0,G(u)]} g(s)\,\mu(ds)
= \int_{(0,u]} g(G(v))\,\nu(dv)
= \int_{0}^{u} g(G(v))\,dv. $$
\end{proof}

\begin{lemma}[Uniform endpoint convergence under equicontinuity]\label{lem:uniform_endpoint}

For every $\varepsilon>0, h
_0>0$ there exists $\delta>0$ (independent of $h$) such that
$$\sup_{0<h<h_0 \ }\sup_{|t-T|<\delta} \big|\hat T^h(t)-\mathcal{S}^h\big| < \varepsilon.$$
In particular, for each fixed $h$, $\hat T^h(t)\to \mathcal{S}^h$ as $t\to T$, and the convergence is uniform in $h\in(0,h_0)$ in the sense above.
\end{lemma}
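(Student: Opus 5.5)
The plan is to get everything from the uniform Lipschitz property of the rescaled time processes noted in Remark \ref{Tequicontinuous}. By Definition \ref{def:rescale_hatT}, each $\hat T^h$ is continuous and nondecreasing with $\hat T^h(0)=0$. Its derivative is $1$ on diffusion intervals and $0$ on intervals where a singular, reflection or $\varsigma$-reflection action is taken. Hence, for all $s,t$ in its domain,
\[
|\hat T^h(t)-\hat T^h(s)|\le |t-s|,
\]
and the Lipschitz constant $1$ does not depend on $h$. The family $\{\hat T^h\}_{h\in(0,h_0)}$ is therefore uniformly equicontinuous, with the explicit modulus $\omega(\delta)=\delta$.

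Next I apply this with $s=T$, since $\mathcal S^h:=\hat T^h(T)$ by definition. For any $t$ with $|t-T|<\delta$ this gives
\[
|\hat T^h(t)-\mathcal S^h| = |\hat T^h(t)-\hat T^h(T)|\le |t-T|<\delta .
\]
Given $\varepsilon>0$ and $h_0>0$, I choose $\delta:=\varepsilon$. This choice is independent of $h$ and $h_0$, and the bound holds for every $h\in(0,h_0)$. Taking suprema over $|t-T|<\delta$ and then over $0<h<h_0$ yields the displayed inequality. The bound is strict because $|t-T|<\delta$ is strict. It then holds a fortiori if one only wants $\le\varepsilon$ after the two suprema.

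The "in particular" statement follows directly. For fixed $h$, continuity of $\hat T^h$ at $T$ gives $\hat T^h(t)\to\hat T^h(T)=\mathcal S^h$. Since the $\delta$ above does not depend on $h$, this convergence is uniform over $h\in(0,h_0)$.

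The main point needing care is the domain of $\hat T^h$. Definition \ref{def:rescale_hatT} sets $\mathcal S^h:=\hat T^h(T)$, while the surrounding text treats $\hat T^h$ as a map from the stretched time axis onto $[0,T]$. To avoid ambiguity, I will state explicitly that the argument uses only two facts: $\mathcal S^h$ is the value of $\hat T^h$ at the endpoint $T$, and $\hat T^h$ is $1$-Lipschitz there. The $1$-Lipschitz property is justified as follows. $\hat T^h$ is continuous and piecewise linear with slopes in $\{0,1\}$ on the intervals $(\hat t^h_i,\hat t^h_{i+1})$. A continuous function that is piecewise $C^1$ with derivative bounded by $1$ on a partition without accumulation points on compacts is $1$-Lipschitz. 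This holds by summing increments over subintervals, or by the mean value theorem on each piece. The partition has no accumulation points because every nonzero step has length at least $h$ or the positive diffusion step.

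If $t$ is allowed slightly beyond $T$, I will extend $\hat T^h$ by continuing with slope at most $1$, or restrict to $t\le T$. Neither choice affects the bound.
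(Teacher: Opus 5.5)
Your argument is the paper's: the paper picks $\delta$ with $\omega(\delta)<\varepsilon$ for the common modulus of continuity of $\{\hat T^h\}$ given by Remark~\ref{Tequicontinuous}, then bounds $\mathcal S^h-\hat T^h(t)=\hat T^h(T)-\hat T^h(t)\le\omega(\delta)$; you make that modulus explicit as $\omega(\delta)=\delta$ via the $1$-Lipschitz property. One small slip: pointwise strictness does not survive the supremum, so with $\delta=\varepsilon$ the supremum over $|t-T|<\delta$ can equal $\varepsilon$ (e.g.\ when $\hat T^h$ has slope $1$ near $T$); take $\delta=\varepsilon/2$, the analogue of the paper's choice $\omega(\delta)<\varepsilon$, to get the strict inequality as stated.
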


\begin{proof} First, remember from Remark \ref{Tequicontinuous} that the family $\{\hat T^h\}_{0<h<h_0}$ is equicontinuous on $[0,T]$, then for any $h_0>0$
there exists a modulus $\omega(\delta)$ with $\omega(\delta)\downarrow0$ as $\delta\downarrow0$ such that
$$\sup_{0<h<h_0 \ }\sup_{|t-s|\le\delta} \big|\hat T^h(t)-\hat T^h(s)\big| \le \omega(\delta).$$
Fix $\varepsilon>0$. Choose $\delta>0$ so small that $\omega(\delta)<\varepsilon$.
For any $h\in(0,h_0)$ and any $t\in(T-\delta,T]$ we have, by monotonicity and equicontinuity,
\[
0 \le \mathcal S^h - \hat T^h(t) = \hat T^h(T) - \hat T^h(t)
\le \sup_{|s-t|\le\delta} \big|\hat T^h(s)-\hat T^h(t)\big|
\le \omega(\delta) < \varepsilon.
\]
Hence $\sup_{0<h<h_0}\sup_{|t-T|<\delta} |\hat T^h(t)-\mathcal{S}^h|<\varepsilon$, as required.
\end{proof}
\begin{lemma}\label{lm:mathfrakS} $\sup_h \mathbb{E}[\mathcal{S}^h]=$
$\sup_h \mathbb{E}[\hat{T}^h(T)]<\infty.$
\end{lemma}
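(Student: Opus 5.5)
The approach is to write $\mathcal S^h=\hat T^h(T)$ as a sum of elementary pieces and bound each in expectation uniformly in $h$. Following the construction of $\hat t^h_i$ and Definition \ref{def:rescale_hatT}, as in \cite[Section 4]{budhiraja2007convergent}, the rescaled clock advances by $\Delta t^h_i$ on diffusion steps and by $h$ on every non-diffusion step. Steps with $\pi^h_i\in\{1,+,-,1_-,\dots,n_-\}$ consume no physical time. Hence, up to an error of at most one step $\sup_x\Delta t^h(x)+h\to0$, I would aim for
\begin{equation*}
\mathcal S^h\le T + \xi^h_T + R^h_{+,T}+R^h_{-,T}+\sum_{k=1}^n \mathcal R^{(k),h}_T + C h .
\end{equation*}
This uses the fact that each singular or reflection step moves the state by exactly $h$. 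It therefore suffices to bound $\sup_h\mathbb E[\xi^h_T]$, $\sup_h\mathbb E[R^h_{\pm,T}]$ and $\sup_h\mathbb E[\mathcal R^{(k),h}_T]$.

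\textbf{Singular control.} I would first reduce to controls with bounded cost. On the bounded domain $[-L,L]\times[0,L]^n$ the functions $F$, $G$ and $\mathcal K$ are bounded by \textbf{(Polx)}, \textbf{(Poly)}, \textbf{(Coef)} and $\lambda^h\le\Lambda_L$. The control that never injects (and stops immediately, or at $T$) yields $J^h_0\ge -C_L$, with $C_L$ independent of $h$. Any $\epsilon$-optimal control in \eqref{eq:Vh} must then satisfy $\mathbb E[\sum_i e^{-rt_i^h}\phi(t_i^h)h\mathbf 1_{\pi_i^h=1}]\le 2C_L+\epsilon$. Assuming $\phi$ is bounded below by some $\phi_{\min}>0$ on $[0,T]$, this gives $\mathbb E[\xi^h_T]\le e^{rT}(2C_L+\epsilon)/\phi_{\min}$. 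This lets us restrict the supremum, without loss of generality, to such controls. This reduction, and the positivity of $\phi$ it requires, is the step I expect to be the main obstacle. If $\phi$ can vanish, I would instead truncate $\xi$ at a large level and show that the value changes by an arbitrarily small amount.

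\textbf{Reflections of $X^h$.} I would use the Lyapunov function $f(x)=x^2$ on $[-L,L]$. A reflection step occurs only at $\pm L$ and decreases $f$ by $2Lh-h^2\ge Lh$ for $h\le L$. An injection step increases $f$ by at most $(2L+h)h$. A diffusion step has conditional mean increment $(2x\mu(x)+\sigma^2(x,\beta))\Delta t^h(x)+o(\Delta t^h)$ by \eqref{eq:local_consistency_E}--\eqref{eq:local_consistency_cov}, which is bounded by $C_L\Delta t^h$. A jump step increases $f$ by at most $C(1+L+|z|^p)^2$. The expected number of jumps is at most $\Lambda_L T+o(1)$, since each step has jump probability $p_{1,n}\le\Lambda_L\Delta t^h$. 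Summing increments and taking expectations yields
\begin{equation*}
L\,\mathbb E[R^h_{+,T}+R^h_{-,T}]\le L^2 + C_L T+(2L+1)\mathbb E[\xi^h_T]+C\Lambda_L T\int(1+|z|^{p})^2\,\text{m}(dz),
\end{equation*}
which is finite because $\text{m}$ has compact support $\mathfrak P$.

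\textbf{Reflections of $\bar\varsigma^h$.} Here a linear test function suffices. By the expectation-preserving projections of Section \ref{sec:hawkesdisc}, $\varsigma^{(k),h}$ decreases through decay, increases by $\varrho(z)$ at jumps, and decreases by $h$ at each $k_-$-reflection. Since $\varsigma^{(k),h}\in[0,L]$, I get $\mathbb E[\mathcal R^{(k),h}_T]\le L+\mathbb E[\sum_{\text{jumps}}\varrho(z)]\le L+\Lambda_L T\int\varrho\,d\text{m}+o(1)$, using Assumption \textbf{(R)}. Combining the three bounds gives $\sup_h\mathbb E[\mathcal S^h]<\infty$.
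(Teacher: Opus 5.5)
Your argument is correct, and its core matches the paper's proof. The paper also writes $\mathcal S^h=\hat t_N=T+h\sum_{i}\mathbf 1_{\{\pi^h_i\neq 0\}}$. It then controls the injection steps by comparing $J^h_0\le C_0-\mathbb E\big[\sum_i e^{-rt^h_i}\phi(t^h_i)h\mathbf 1_{\{\pi^h_i=1\}}\big]$ with the value function. Your version makes explicit what that step silently needs: you must work with $\epsilon$-optimal controls, and $\phi\ge\phi_{\min}>0$. The restriction to $\epsilon$-optimal controls is necessary, not just convenient. A control that alternates injection at $L-h$ with reflection at $L$ never advances physical time, so it gives $\mathcal S^h=\infty$.

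The genuine difference lies in the reflection steps $\pi^h_i\in\{+,-,1_-,\dots,n_-\}$. The paper only notes that they are costless. In its last line it replaces $h\sum_i\mathbf 1_{\{\pi^h_i\neq0\}}$ by $h\sum_i\mathbf 1_{\{\pi^h_i=1\}}$, so their number is never bounded. Your test-function estimates supply exactly this missing bound:
\begin{itemize}
\item For $X^h$ you use $f(x)=x^2$. Each boundary reflection lowers $f$ by at least $Lh$. The other steps raise it by amounts controlled through $\sum_i\Delta t^h_i$, the expected number of jumps, and $\xi^h_T$.
\item For $\varsigma^{(k),h}$ you use a linear test function together with the expectation-preserving projections.
\end{itemize}
So your route gives a complete proof where the paper's is partial.

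A few minor points to tidy:
\begin{itemize}
\item Treat $R^h_{\pm,T}$ and $\mathcal R^{(k),h}_T$ as total variations. With the paper's definitions, some of these sums are nonpositive.
\item Integrate against $\text{m}^h$ rather than $\text{m}$. This is harmless, since both are supported in $\mathfrak P$.
\item Set $\pi^h_i=0$ after $\mathfrak T$, without loss of generality. $J^h_0$ charges no cost for injections after stopping, so your bound on $\mathbb E[\xi^h_T]$ only covers injections before $\mathfrak T$.
\end{itemize}
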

\begin{proof}
    Since 
    $$\hat{T}^h(T) = \hat{t}_N = \sum_{i=0}^{N-1}\Delta t_i^h\textbf{1}_{\pi_i^h=0 } + h\sum_{i=0}^{N-1}\textbf{1}_{\pi_i^h\neq 0 } = T + h\sum_{i=0}^{N-1}\textbf{1}_{\pi_i^h\neq 0 },$$
    it suffices to find a uniform bound on $h\mathbb{E}\Big[\sum_{i=0}^{N-1}\textbf{1}_{\pi_i^h\neq 0 }\Big]$. We know that each time $\pi_i^h=1$, there is a cost $\phi(t_i^h)h$, while for the reflections $\pi^h_i=+,-,1_-,\dots,n_-$ there are no costs. For any admissible control, we note that
    $$J_0^h(x,\bar{\varsigma}; \cdot) \leq - \mathbb{E}\Bigg[\sum_{i=0}^{N-1} e^{-rt_i^h}\phi(t_i^h)h\textbf{1}_{\pi_i^h=1}\Bigg] + C_0$$
    where $C_0$ absorbs the bounded running and terminal costs. Now, taking a supremum over the controls, we directly show that there exists a constant $C>0$ such that
      \[h\mathbb{E}\Bigg[\sum_{i=0}^{N-1}\textbf{1}_{\pi_i^h=1 }\Bigg]\leq C<\infty.\]

    Since the constant upper bound given by the right hand side is independent of $h$, the bound is uniform.
    Thus, $\mathbb{E}[\hat{T}^h(T)] = T + h\mathbb{E}\Big[\sum_{i=0}^{N-1}\textbf{1}_{\pi_i^h=1 }\Big] \leq T + C$. So, $\sup_h \mathbb{E}[\hat{T}^h(T)] \leq T + C < \infty.$
\end{proof}

\subsubsection{Tightness and convergence of the MDP to the singular SDE}
In this section, we focus on the convergence of the MDP previously introduced in Section \ref{sec:MCdisc} to the solution to the singular controlled SDE system \eqref{SDE:singular}. The convergence will hold in a weak sense in Skorokhod space that we recall below. 

\begin{definition}[Skorokhod space]
Let $T>0$ and $E$ be a Polish space.
The \emph{Skorokhod space} $D([0,T];E)$ is the set of all c\`adl\`ag functions from $[0,T]$ into $E$,
that is, functions which are right-continuous and admit left limits at every $t\in(0,T]$.
\end{definition}

In order to get the tightness of the controls $\hat\beta^h,\hat\gamma^h$ we need additional properties for defining the admissibility. The Markovian structure of the problem naturally leads us to restrict the study to feedback controls.

\begin{definition}[Feedback controls]
   We define the set $\mathcal V^{F}$ as the set of $\nu:=(\beta,\gamma)$ with values in $\mathcal V$ such that there exists continuous mapping $\psi^b:[0,T]\times \mathbb 
   R\longrightarrow B$ and $\psi^g:[0,T]\times \mathbb 
   R\longrightarrow \Gamma$ such that $\beta_t=\psi^b(t,\hat X^h_t),\; \gamma_t=\psi^g(t,\hat X^h_t)$. 
\end{definition} 
\begin{remark}\label{rm:tight}
    As a consequence of this definition, if we restrict the study to control $\nu^h:=(\beta^h,\gamma^h)\in \mathcal V^F$, then $\hat\nu^h:=(\hat\beta^h,\hat\gamma^h)\in \mathcal V^h$ and as soon as $(\hat X^h)_h$ is tight in $D([0,T];\mathbb R)$ then by continuity of the feedback functions $\psi^b,\psi^g$, the family $\hat\nu^h$ is also tight in $D([0,T];B\times\Gamma)$.
\end{remark}

The main result of the section is given below and provides tightness properties of the processes involved into the problem.

\begin{proposition}
Let $\hat\nu^h=(\hat\beta^h,\hat\gamma^h)\in \mathcal V^F$ and define
\[
\hat{\mathcal{H}}^h
:= \big(
\hat{T}^h,
\hat{\bar{\varsigma}}^h,\hat{\lambda}^h,\hat{N}^h, \hat{X}^h,\hat{H}^h,\hat{B}^h,\hat{M}^h,\hat{R}^{+,h}, \hat{R}^{-,h};\hat\tau^h,\hat{\xi}^h,\hat{\beta}^h,\hat{\gamma}^h,\emph{m}^h
\big).
\]
Then the family of random elements $
\big\{\hat{\mathcal{H}}^h\,:\, h>0\big\}$
is tight in the product space
\begin{align*}
&[0,\mathcal S^h]\times  D([0,\mathcal S^h]; \mathbb R^n_+\times \mathbb R_+\times \mathbb R_+\times \mathbb R\times\mathbb R\times \mathbb R\times \mathbb R\times \mathbb R_+\times \mathbb R_+)\\
&\times  [0,\mathcal S^h]
\times D([0,\mathcal S^h];\mathbb R\times B\times \Gamma)\times \mathcal M(\mathbb R_+).
\end{align*}

\end{proposition}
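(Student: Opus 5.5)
Since a finite product of tight families is tight, I will prove tightness of each component separately. Throughout I work on a fixed horizon: Lemma \ref{lm:mathfrakS} and Markov's inequality make $\mathcal S^h$ tight. So for every $\epsilon>0$ there is $S_\epsilon$ with $\sup_h\mathbb P(\mathcal S^h>S_\epsilon)\le\epsilon$, and it suffices to prove tightness of the processes stopped at (or extended constantly after) $\mathcal S^h$ in $D([0,S_\epsilon];\cdot)$. The scalar components are then immediate. $\hat\tau^h\le \mathcal S^h$ is tight. $\mathrm m^h$ is a probability measure supported in the compact set $\mathfrak P$, so the family is tight in $\mathcal M(\mathbb R_+)$ by Prokhorov. $\hat T^h$ is $1$-Lipschitz with $\hat T^h(0)=0$ (Remark \ref{Tequicontinuous}), so Arzel\`a--Ascoli gives tightness, even in $C$.

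For the bounded-domain components I will use Aldous' criterion. Compact containment is trivial for $\hat X^h\in[-L,L]$, $\hat{\bar\varsigma}^h\in[0,L]^n$ and $\hat\lambda^h\le\Lambda_L$. It remains to show that for stopping times $\rho^h\le S_\epsilon$ and $\delta_h\to0$ the increments over $[\rho^h,\rho^h+\delta_h]$ vanish in probability. I decompose $\hat X^h=x+\hat B^h+\hat M^h+\hat\xi^h+\hat H^h+\hat R^h_{+}-\hat R^h_{-}+\hat\varepsilon^h$ and check each term.
\begin{itemize}
\item $\hat B^h$ has increments bounded by $\sup_{|x|\le L}|\mu(x)|\,\delta_h$.
\item $\hat M^h$ is a martingale whose conditional variance increments are $\sigma^2\Delta t^h+o(\Delta t^h)$ by local consistency \eqref{eq:local_consistency_cov}. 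Doob's inequality gives $\mathbb E|\hat M^h_{\rho+\delta}-\hat M^h_\rho|^2\le C\delta_h$.
\item $\hat H^h$ has jumps bounded by $\sup|\chi_h|$. The number of jumps in the window is controlled by its compensator $\int\hat\lambda^h\,d\hat T^h\le\Lambda_L\delta_h$, so the probability of any jump in the window tends to zero.
\item $\hat\varsigma^h$ and $\hat N^h$ are handled the same way: a deterministic decay at rate bounded by $q_{\max}L$, plus jumps whose probability over the window is $O(\Lambda_L\delta_h)$.
\end{itemize}

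The key effect of the rescaling, and the reason it was introduced, concerns $\hat\xi^h$, $\hat R^{\pm,h}$ and $\hat{\mathcal R}^{(k),h}$. Each step with $\pi^h_i\neq0$ moves the state by exactly $h$ while advancing rescaled time by exactly $h$. Hence these processes are Lipschitz with constant $1$ in the rescaled time, since $|\hat\xi^h_t-\hat\xi^h_s|\le |t-s|+h$, and similarly for the reflections. The $+h$ only accounts for the piecewise-constant interpolation and is asymptotically negligible. Ascoli-type tightness follows, with all limits continuous. This is the crux: without rescaling, $\xi^h$ could accumulate jumps of order one in vanishing time and fail tightness in the $J_1$ topology. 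With $\hat X^h$ tight and the decomposition terms tight, $\hat\varepsilon^h$ is also tight, since $\sup_t\mathbb E|\hat\varepsilon^h_t|\to0$.

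Finally, the controls $\hat\beta^h,\hat\gamma^h$ are tight by Remark \ref{rm:tight}, given the tightness of $\hat X^h$ and the continuity of the feedback maps. This uses that $\psi^b,\psi^g$ are uniformly continuous on the compact set $[0,T]\times[-L,L]$, so the moduli of continuity of $\hat X^h$ transfer to the controls. I expect the main obstacle to be the jump terms $\hat H^h,\hat N^h,\hat\varsigma^h$. There I must verify carefully that the discretized Hawkes chain of Section \ref{sec:hawkesdisc} has a one-step jump probability bounded by $\Lambda_L\Delta t^h$ uniformly, including the projection onto the $\mathbf S$-grid. The projection introduces $O(h)$ spurious moves of $\varsigma$, which I will bound via the expectation-preserving property of $\mathcal P_{\mathbf S}$ together with a second-moment estimate of order $h\,\delta_h/\Delta t^h$. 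This estimate requires that the grid mesh be of order at most $\Delta t^h$ (or $h^2$), and I would impose this if it is not already implied by the construction.
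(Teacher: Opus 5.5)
Your argument is sound. It shares the paper's skeleton: component-wise tightness, Aldous' criterion for the Hawkes components, the bound $|\hat\xi^h_t-\hat\xi^h_s|\le|t-s|+h$, compact support for $\mathrm m^h$, and Remark~\ref{rm:tight} for the feedback controls. The estimates underneath, however, are different.

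\textbf{The paper's route.} It never uses the truncation at $L$. It bounds $\sup_h\sup_t\mathbb E[\hat\varsigma^{(k),h}_t]$ through a Gr\"onwall--Volterra system and controls the increments of $\hat{\bar\varsigma}^h$ by a decay/compensator/BDG split. It then proves $\sup_h\mathbb E[\sup_t|\hat Y^h_t|^2]<\infty$ for the process without singular control by Gr\"onwall in rescaled time. It cites Kushner--Martins for $\hat R^{\pm,h}$ and declares $\hat X^h$ tight as a sum of tight processes.

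\textbf{Your route.} You use that the chain lives in $[-L,L]\times[0,L]^n$, so compact containment is free and all compensators are bounded by $\Lambda_L$. You run Aldous term by term on the decomposition of $\hat X^h$. You treat the reflections by the same rescaled-time Lipschitz argument as $\hat\xi^h$ and make the random horizon explicit through Lemma~\ref{lm:mathfrakS}. You also notice the grid-projection noise in $\hat{\bar\varsigma}^h$, which the paper's integral representation of $\hat\varsigma^{(k),h}$ ignores.

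\textbf{What each buys.} The paper's moment bounds are uniform in $L$, which would matter if $h\to0$ and $L\to\infty$ were taken jointly. Your route supplies something the paper's lacks. A uniform bound on $\mathbb E[\sup_t|\hat Y^h_t|^2]$ gives compact containment but no control of oscillations, and in the $J_1$ topology a sum of tight processes need not be tight. Checking Aldous' condition increment by increment handles both issues.

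\textbf{Three small repairs.}
\begin{enumerate}
\item Deducing tightness of $\hat\varepsilon^h$ from that of $\hat X^h$ is circular. Your Aldous check for $\hat X^h$ goes through the decomposition and never bounds the increments of $\hat\varepsilon^h$, and $\sup_t\mathbb E|\hat\varepsilon^h_t|\to0$ does not control them at stopping times. Either verify Aldous for $\hat X^h$ directly from the one-step moments of the chain, or use the stronger $\mathbb E[\sup_t|\hat\varepsilon^h_t|]\to0$. The relevant one-step facts are: conditional mean and variance $O(\Delta t^h)$ on diffusion steps, jump probability at most $\Lambda_L\Delta t^h$, and moves of size $h$ costing rescaled time $h$ otherwise. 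Note that $\hat\varepsilon^h$ is not a component of $\hat{\mathcal H}^h$ anyway.
\item State compact containment for the unbounded components. For $\hat N^h$ and $\hat H^h$, use Markov's inequality with $\mathbb E[\hat N^h_S]=O(\Lambda_L S)$; for $\hat M^h$, use Doob.
\item The extra mesh condition on the $\mathbf S$-grid is unnecessary. The chain sits on a grid point and the decay target lies within $Lq_{\max}\Delta t^h$ of it. So the mean-preserving two-point projection has conditional variance at most $Lq_{\max}\Delta t^h\cdot\text{mesh}$ per diffusion step, plus $O(\text{mesh}^2)$ on jump steps, which occur with probability $O(\Delta t^h)$. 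The accumulated projection noise over a rescaled window of length $\delta$ is therefore $O(\delta)$ for any bounded mesh.
\end{enumerate}
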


\begin{proof}
First, we will prove the tightness of $\hat{\bar{\varsigma}}^h$ using Aldous' tightness criterion: for every $\eta>0$, 
\begin{equation}\label{aldous}\lim_{\delta\downarrow 0}\;
\sup_{h>0}\;
\sup_{\hat{\tau}\le \mathcal S^h}
\mathbb{P}\!\left(
\big|\hat\varsigma^{(k),h}_{\hat{\tau}+\delta}-\hat\varsigma^{(k),h}_{\hat{\tau}}\big|>\eta
\right)
=0,\end{equation}
where the supremum in $\hat{\tau}$ is over all $\mathcal{F}^h$-stopping times bounded by $S$. Recall that
\begin{equation}\label{eq:varsigmaproof}
    \hat{\varsigma}^{(k),h}_t = \hat{\varsigma}^{(k),h}_se^{-q_k(t-s)} + \int_s^t\int_{\mathbb{R}_+} \varrho(z)e^{-q_k(t-u)}\hat{N}^h(du, dz).
\end{equation}
Therefore,
$$\mathbb{E}\big[\hat\varsigma^{(k),h}_t\big] = \mathbb{E}\big[\varsigma^{(k),h}_0\big]e^{-q_kt} + \mathbb{E}\Bigg[\int_0^t\int_{\mathbb{R}_+}\varrho(z)\text{m}^h(dz)du\Bigg].$$
From Assumption \textbf{(R)} we get
\begin{align*}
    \mathbb{E}\big[\hat\varsigma^{(k),h}(t)\big] &\leq  \mathbb{E}\big[\varsigma^{(k),h}_0\big] + \bar{\varrho}\int_0^t\mathbb{E}[\lambda_\infty(u) + \bar{d}\cdot\mathbb{E}\big[\hat{\bar{\varsigma}}^h_u]\big]du.
\end{align*}
Note that $(\mathbb E\big[\hat\varsigma^{(k),h}_t\big])_{k\leq n}$ satisfies a system of Gr\"onwall–Volterra inequalities. Consequently there exists a constant $C_1$ such that
$$\sup_h\sup_{t\in[0,\mathcal{S}^h]} \mathbb{E}\Big[\big|\hat\varsigma^{(k),h}_t\big|\Big] \leq C_1 < \infty.$$
From Markov's inequality, we then deduce that for any time $t$ and $\zeta>0$, there exists a constant $\kappa_1$ large enough so that
$$\sup_h\mathbb{P}(|\hat\varsigma^{(k),h}_t|>\kappa_1)\leq \zeta.$$

Note that for any $0<u\le\delta$ and $\hat{\tau}\leq \mathcal S^h$ we have, from \eqref{eq:varsigmaproof},
$$\hat\varsigma^{(k),h}_{\hat{\tau}+u}-\hat\varsigma^{(k),h}_{\hat{\tau}}
=
\hat\varsigma^{(k),h}_{\hat{\tau}}\big(e^{-q_k u}-1\big)
\;+\;
D^{(k),h}_{\hat{\tau},\hat{\tau}+u}
\;+\;
M^{(k),h}_{\hat{\tau},\hat{\tau}+u},$$
where $D^{(k),h}$ is the compensator/drift and $M^{(k),h}$ is the martingale terms of the increment $\hat\varsigma^{(k),h}_{\hat{\tau}+u}-\hat\varsigma^{(k),h}_{\hat{\tau}}$. We study the convergence of these three terms separately. 
\begin{enumerate}[label=(\arabic*)]
    \item First, recall that $\mathbb{E}\!\left[\;\big|\hat\varsigma^{(k),h}_{\hat{\tau}}\big|\;\right] \le C_{1}.$
Hence
$$\mathbb{E}\!\left[
\big|\hat\varsigma^{(k),h}_{\hat{\tau}}\big|\cdot\big|e^{-q_k u}-1\big|
\right]
\le
C_{1}\,\big|1-e^{-q_k u}\big|\to 0 \text{ as $u\downarrow 0$ uniformly in $h$ and $\hat{\tau}$ }.$$
\item  Next, from Assumption \textbf{(R)}, we have
$$\mathbb{E}\!\left[\;\big|D^{(k),h}_{\hat{\tau},\hat{\tau}+u}\big|\;\right]
\le
\bar\varrho\;\mathbb{E}\!\left[\int_{\hat{\tau}}^{\hat{\tau}+u}\lambda^h_s\,ds\right].$$
By the uniform first-moment bound on $\bar\varsigma^h$ and linearity of $\lambda^h$,
$$\sup_{h>0}\sup_{s\le \mathcal S^h}\mathbb{E}[\lambda^h(s)] <\infty.$$
Thus, there exists a constant $C_{2}$ such that
$$\mathbb{E}\!\left[\;\big|D^{(k),h}_{\hat{\tau},\hat{\tau}+u}\big|\;\right]
\le
C_{2}\,u.$$
Consequently, $\mathbb{E}\!\left[\;\big|D^{(k),h}_{\hat{\tau},\hat{\tau}+u}\big|\;\right]\to 0$ as $u\downarrow 0$ uniformly in $h$ and $\hat{\tau}$.
\item The predictable quadratic variation of $M^{(k),h}$ satisfies
$$\big\langle M^{(k),h}\big\rangle_{\hat{\tau},\hat{\tau}+u}
=
\int_{\hat{\tau}}^{\hat{\tau}+u}\!\!\int_{\mathbb{R}_+}
\big(\varrho^{(k)}(z)\big)^{2}
e^{-2q_k(\hat{\tau}+u-s)}
\,\lambda^h_s \text{m}(dz)\,ds.$$
Using $\sup_{s\le \mathcal S^h}\mathbb{E}[\lambda^h_s]<\infty$
and $\mathbb{E}[(\varrho^{(k)}(z))^{2}]<\infty$, we deduce that there exists a positive constant $C_{3}$ such that
$$\mathbb{E}\!\left[\,\big\langle M^{(k),h}\big\rangle_{\hat{\tau},\hat{\tau}+u}\,\right]
\le
C_{3}\,u.$$
From Burkholder-Davis-Gundy inequality, we get
$$\mathbb{E}\!\left[\;\big|M^{(k),h}_{\hat{\tau},\hat{\tau}+u}\big|\;\right]
\le
\mathbb{E}\!\left[\big\langle M^{(k),h}\big\rangle_{\hat{\tau},\hat{\tau}+u}\right]^{1/2}
\le
C_{3}\,u^{1/2}.$$
\end{enumerate}
Combining the three points above, for any $0<u\le\delta$,
\[\sup_{h>0}\;\sup_{\hat{\tau}\le S}
\mathbb{E}\!\left[
\big|\hat\varsigma^{(k),h}_{\hat{\tau}+u}-\hat\varsigma^{(k),h}_{\hat{\tau}}\big|
\right] \to 0 \text{ as $u\downarrow 0$}\]
and the convergence is uniform in $\hat{\tau}$ and $h$. Equation \eqref{aldous} follows from Markov's inequality. Then, according to \cite[Corollary 3.7.4]{ethier2009markov} the component process $\hat\varsigma^{(k),h}$ is tight in $D([0,\mathcal{S}];\mathbb{R}_+)$. It implies the tightness of the vector-valued process $\hat{\bar{\varsigma}}^{\,h}$ in $D([0,\mathcal{S}];\mathbb{R}^n_+)$.\vspace{0.5em}

Since $\hat{\lambda}^h$ is a fixed linear combination of $\hat{\bar{\varsigma}}^h$, the tightness of $\hat{\lambda}^h$ is a direct consequence of the tightness of $\hat\varsigma^{(k),h}$.\vspace{0.5em}

The proof of the tightness of $\{\hat{R}^{+,h}, \hat R^{-,h}\}$ is similar than \cite[Theorem 5.3]{kushner1991numerical}.\vspace{0.5em}

 To prove the tightness of $\hat X^h,\hat B^h,\hat M^h,$ and $\hat H^h$, we must first introduce the continuous interpolation of the solution $\hat{Y}$ to the SDE without singular control \eqref{eq:non-sing_dyn}, with $\hat{Y}^h_t:=\hat{Y}^h_{i^h(t)}$. so that
\[\hat{Y}^h_t = Y_0 + \hat{B}^h_t + \hat{M}^h_t + \hat{H}^h_t + \hat{R}^{+,h}_{t}- \hat{R}^{-,h}_{t}.\] 
Then, there exists a constant $c>0$ such that
\begin{align*}
    \mathbb{E}\left[ \sup_{t\leq T} \left\lvert \hat{Y}^h_t \right\rvert^2 \right] &\leq c\mathbb{E}\left[ |Y_0|^2 + \sup_{t\leq T} \left\lvert \int_0^{t} \mu(\hat{Y}^h_s)d\hat{T}^h(s) \right\rvert^2\right]  \\
    &\quad + c\mathbb{E}\left[ \sup_{t\leq T} \left\lvert \int_0^{t} \sigma (\hat{Y}^h_s, \hat{\beta}^h_s)d\hat{W}^h_s \right\rvert^2 \right] \\
    &\quad + c\mathbb{E}\Bigg[\sup_{t\leq T}\left\lvert\int_0^{t}\int_{\mathbb{R}_+}\chi( \hat{Y}^h_s, \hat{\gamma}^h_s,z) \Big(\lambda_\infty(s)+\bar{d}\hat{\bar{\varsigma}}^\top_s \Big)\text{m}^h(dz)d\hat{T}(s) \right\rvert^2 \Bigg] \\
    &\quad + c\mathbb{E}\Bigg[\sup_{t\leq T} \left\lvert\hat{R}^{+,h}_{t}\right\rvert^2 +\left\lvert\hat{R}^{-,h}_{t}\right\rvert^2\Bigg] +E_T^h,\\
\end{align*}
where $E$ is a term induced by the interpolation such that $\lim_{h\to 0}E^h_T = 0 .$ By using Assumption \textbf{(L)} together with the tigthness of $\hat R^h_+,\hat R^h_-$, boundedness of $\hat\beta^h$ and $\hat\gamma^h$, and Burkholder-Davis-Gundy for the martingale term, there exists a constant $\tilde c>0$ such that 

    $$\mathbb{E}\left[ \sup_{t\leq T} |\hat{Y}^h_t|^2 \right] \leq \tilde c\Big(1+\mathbb{E}\left[ \int_0^T \sup_{u\leq s} |\hat{Y}^h_u|^2d\hat{T}^h(s) \right] \Big).$$
Recall that $\hat{T}(\cdot)$ is an adapted process,  we deduce from Fubini's theorem that
    $$\mathbb{E}\left[ \sup_{t\leq T} |\hat{Y}^h_t|^2 \right] \leq  \tilde c\Big(1+\int_0^T \mathbb{E}\left[\sup_{u\leq s} |\hat{Y}^h_u|^2 \right] d\mathbb{E}[\hat{T}^h(s)]\Big) \leq \tilde c\Big(1+\int_0^T \mathbb{E}\left[\sup_{u\leq s} |\hat{Y}^h_u|^2 \right] ds\Big) .$$
Notice that $\tilde c$ depend on $\mathcal{S}^h$ and thus on $\sup_h \mathbb{E}[\hat{T}^h(T)]$ but not directly on $h$. Applying Gronwall's lemma, we get 
$$\sup_h\mathbb{E}\Bigg[ \sup_{t\leq T} |\hat{Y}^h_t|^2 \Bigg] < \infty.$$
As a consequence of de la Vallée Poussin criterion, $\{\hat{Y}^h\}$ is uniformly integrable. Thus, $\{\hat{Y}^h\}$ is tight in $D([0,\mathcal S^h];\mathbb R)$. By the definition of the time rescaling, $|\hat{\xi}^h_{t + s} - \hat{\xi}^h_t| \leq |s| + o(h)$. 
Consequently, $\{ \hat{\xi}^h(\cdot)\}$ is tight. 
Since $\{\hat{T}^h(\cdot)\}$ is bounded, it is uniformly integrable and consequently it is tight.\vspace{0.5em}

Note that the tightness of $\hat{N}^h$ follows from the fact that the mean of the number of jumps on any interval $[t, t+s]$ is a linear function of $\hat\lambda^h$ which is tight. Hence, $\hat{N}^h$ is tight, so is $\hat{H}^h$.\vspace{0.5em}

Note that ${\text{m}}^h$ is tight by \cite[Theorem 3.2]{parthasarathy2005probability} because the compact support property we assumed for ${\text{m}}^h$ is inherited by its discretization. The tightness of the other terms also follows from \cite[Theorem 5.3]{kushner1991numerical} combined with \cite[Lemma 23.12]{kallenberg2021foundations}.\vspace{0.5em}

Therefore, since $\hat{B}^h, \hat{M}^h, \hat{\xi}^h,\hat{H}^h$, and $\hat{R}^{\pm,h}$ are tight and $\hat{X}^h$ is the sum of these tight processes, $\{ \hat{X}^h\}$ is also tight. The tightness of the stopping time $\hat\tau^h$ in $[0,T]$ is a direct consequence of the finite time horizon setting. Finally, $\hat\nu^h:=(\hat\beta^h,\hat\gamma^h)$ is tight from Remark \ref{rm:tight}.
\end{proof}

As a consequence of this result together with Prokhorov’s theorem, we have the following corollary.
    \begin{corollary}\label{subseqcv}
        There exists a non-increasing sequence $(h_n)_n$ converging to $0$ when $n$ goes to $\infty$ and a limit random  element $\mathcal H$ such that 
        \[\lim\limits_{n\to\infty} \hat{\mathcal H}^{h_n}=\hat{\mathcal H}:= \big(
\hat{T},
\hat{\bar{\varsigma}},\hat{\lambda},\hat{N}, \hat{X},\hat{H},\hat{B},\hat{M},\hat{R}^{+}, \hat{R}^{-};\hat\tau,\hat{\xi},\hat{\beta},\hat{\gamma},\hat{\emph{m}}
\big)\; a.s.. \]

    \end{corollary}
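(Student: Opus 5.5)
The corollary follows from the tightness proposition via Prokhorov's theorem, followed by the Skorokhod representation theorem. The only real care needed is the state space: the tightness proposition is stated on $[0,\mathcal S^h]$, whose endpoint depends on $h$, and Prokhorov requires a fixed Polish space.

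The first step fixes a common Polish space. Each path component is extended beyond $\mathcal S^h$ by freezing it at its value at $\mathcal S^h$; for $\hat T^h$ we instead extend with slope one after $T$, keeping it continuous and nondecreasing. All processes then live in $D([0,\infty);E)$ with the Skorokhod $J_1$ topology, where $E$ is the relevant finite-dimensional product. The stopping time $\hat\tau^h$ takes values in $[0,\infty)$. By Lemma~\ref{lm:mathfrakS} and Markov's inequality, $\sup_h\mathbb P(\mathcal S^h>K)\le (T+C)/K$. Hence, up to an event of arbitrarily small probability, everything lives on a fixed compact interval $[0,K]$. The tightness proposition then transfers to tightness of $\{\hat{\mathcal H}^h\}$ in
\[
\mathbb R_+\times D([0,\infty);E)\times\mathbb R_+\times D([0,\infty);\mathbb R\times B\times\Gamma)\times\mathcal M(\mathbb R_+).
\]
This is a countable product of Polish spaces: $D$ with $J_1$ is Polish, and $\mathcal M(\mathbb R_+)$ with the weak topology is Polish. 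Tightness of each marginal gives tightness of the product family, since a product of compacts is compact.

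The second step applies Prokhorov's theorem. Tightness on a Polish space gives relative compactness in law, so for any sequence $h_n\downarrow 0$ there is a subsequence, still denoted $(h_n)_n$ and chosen non-increasing, along which $\hat{\mathcal H}^{h_n}$ converges in distribution to some random element $\hat{\mathcal H}$.

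The third step upgrades this to almost sure convergence. The limit space is separable, so the Skorokhod representation theorem yields a probability space $(\tilde\Omega,\tilde{\mathcal F},\tilde{\mathbb P})$ carrying copies $\tilde{\mathcal H}^{h_n}\overset{d}{=}\hat{\mathcal H}^{h_n}$ and $\tilde{\mathcal H}\overset{d}{=}\hat{\mathcal H}$ with $\tilde{\mathcal H}^{h_n}\to\tilde{\mathcal H}$ $\tilde{\mathbb P}$-a.s. Following standard practice, as in Kushner--Dupuis, we keep the original notation for these copies. The a.s.\ statement of the corollary is then understood in this sense, and the components of $\hat{\mathcal H}$ are labelled as in the statement.

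The main obstacle is the first step, the $h$-dependent horizon $\mathcal S^h$. It is handled by the extension/freezing trick together with the uniform moment bound of Lemma~\ref{lm:mathfrakS}, which shows that the random horizons are themselves tight. One should also note that almost sure convergence holds only after the change of probability space, not on the original $\Omega$.
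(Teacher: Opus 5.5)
Your proposal is correct and follows the paper's route, which is simply the tightness proposition combined with Prokhorov's theorem. You also make explicit two points the paper leaves implicit. First, you place all processes on a common Polish space despite the $h$-dependent horizon $\mathcal S^h$, using the uniform bound of Lemma~\ref{lm:mathfrakS}. Second, you invoke Skorokhod's representation theorem, so the claimed almost sure convergence holds on a new probability space rather than on the original $\Omega$.
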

We now identify this limit and study the convergence of the MDP $\hat X^h$ to the solution to the SDE \eqref{SDE:singular}. We extend the proof framework laid out in \cite[Section 5]{budhiraja2007convergent} to our model and get the following result.
\begin{theorem}\label{thm:limit_point_prop}
    For each fixed sequence $\hat{\beta},\hat{\gamma}\in \mathcal V^F$, the limit point $\hat{\mathcal{H}}$ has the following properties:
    \begin{enumerate}
        \item $\hat{T}$ is nondecreasing and Lipschitz continuous with Lipschitz coefficient 1.
        \item For all $t\in [0,T]$, $\hat{B}_t = \int_0^{t} \mu( \hat{X}_s)d\hat{T}(s)$.
        \item $(\hat{M}_t)_{t\leq T}$ is a continuous $(\hat{\mathcal{F}}_t)_{t\leq T}$-martingale with quadratic variation given by 
        \begin{equation}\label{QV}\langle \hat{M} \rangle_t = \int_0^t \left\lvert\sigma(\hat{X}_s, \hat{\beta}_s) \right\rvert^2 d\hat{T}(s), t\in [0,T].\end{equation}
        \item $\hat{\xi}$ is nondecreasing and continuous.
        \item $\hat{R}^+, \hat{R}^-$ are nondecreasing continuous processes which satisfy 
            \begin{equation*}
                \int_0^{\hat\tau\wedge T} \textbf{1}_{X^L_t > -L }d\hat{R}_t^+ = 0 \quad \quad \int_0^{\tau\wedge T}\textbf{1}_{ X^L_t < L }d\hat{R}^-_t= 0
            \end{equation*}
        \item For all $t\in[0,T]$, $\hat{H}_t = \int_0^{t}\int_{\mathbb{R}_+}\int_{\mathbb{R}_+}\chi(\hat{X}_s, \hat{\gamma}_s, z)\textbf{1}_{\theta\leq \lambda_\infty(s) + \bar{d}\bar{\varsigma}^\top_s}\Pi(ds,d\theta, dz)$.
        \item $\hat{X}$ satisfies $\mathbb{P}(\hat{X}_t\in \mathbb{S}_L)=1$ for all $t\in[0,T]$ and
            \begin{equation*}
                \hat{X}_t = x + \hat{B}_t + \hat{M}_t +\hat{\xi}_t +\hat{H}_t+\hat R^+_t-\hat R^-_t.
            \end{equation*}
    \end{enumerate}
\end{theorem}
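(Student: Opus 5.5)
We follow the weak-convergence scheme of \cite[Section 5]{budhiraja2007convergent} and \cite{kushner1992numerical}. By Corollary \ref{subseqcv} and Skorokhod's representation theorem, we work on a probability space where $\hat{\mathcal H}^{h_n}\to\hat{\mathcal H}$ almost surely in the product Skorokhod topology. The plan is to identify each component of the limit by passing to the limit in the discrete representation
\[
\hat X^h_t = x + \hat B^h_t + \hat M^h_t + \hat\xi^h_t + \hat H^h_t + \hat R^{+,h}_t - \hat R^{-,h}_t + \hat\varepsilon^h_t .
\]
The error $\hat\varepsilon^h$ vanishes in $L^1$ uniformly on $[0,\mathcal S^h]$. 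Item 7 then follows from items 2--6 once each piece has been identified. The constraint $\hat X_t\in\mathbb S_L$ holds because $\hat X^h_t\in[-L,L]$ for every $h$ and this closed set is preserved under Skorokhod limits.

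We first treat the pathwise items. For item 1, each $\hat T^h$ is nondecreasing and $1$-Lipschitz by Definition \ref{def:rescale_hatT} and Remark \ref{Tequicontinuous}. Both properties are closed under uniform convergence, and Skorokhod convergence to a continuous limit is uniform.

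For item 4, every singular increment of $\hat\xi^h$ equals $h$ and is spread over a rescaled time interval of length $h$. Hence $|\hat\xi^h_{t+s}-\hat\xi^h_t|\le |s|+O(h)$, so the limit is nondecreasing and Lipschitz, in particular continuous. Item 5 follows in the same way: reflection steps have size $h$ and last a rescaled time $h$, so the limits $\hat R^{\pm}$ are continuous and nondecreasing. The complementarity conditions come from the fact that $\hat R^{+,h}$ (resp. $\hat R^{-,h}$) increases only when $\hat X^h$ sits at $-L$ (resp. $L$). Writing $\int \mathbf 1_{\{\hat X^h>-L+\delta\}}d\hat R^{+,h}=0$, passing to the limit using weak convergence of the measures $d\hat R^{+,h}$ against the continuous cutoff, and then letting $\delta\downarrow0$ gives the limiting condition.

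For item 2, we write $\hat B^h_t=\int_0^t\mu(\hat X^h_s)\,d\hat T^h(s)+o(1)$ by local consistency \eqref{eq:local_consistency_E}. The Lipschitz continuity of $\mu$ in \textbf{(L)} and the uniform moment bounds then let us pass to the limit in the Stieltjes integral, since $\hat T^h\to\hat T$ uniformly and $\hat X^h\to\hat X$ at continuity points.

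The martingale items 3 and 6 are the main obstacle. For item 3, we use the characterization by test functions. For any bounded continuous functional $\Phi$ of the path components up to times $s_1,\dots,s_p\le s$, the discrete martingale property gives
\[
\mathbb E\big[\Phi\,(\hat M^h_{t}-\hat M^h_s)\big]=0,
\qquad
\mathbb E\Big[\Phi\Big((\hat M^h_t)^2-(\hat M^h_s)^2-\int_s^t|\sigma(\hat X^h_u,\hat\beta^h_u)|^2\,d\hat T^h(u)\Big)\Big]=o(1),
\]
where the second identity uses \eqref{eq:local_consistency_cov}. Uniform integrability, obtained from the second-moment bound $\sup_h\mathbb E\sup_t|\hat Y^h_t|^2<\infty$ established in the tightness proof, lets us pass to the limit. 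The convergence $\sigma(\hat X^h,\hat\beta^h)\to\sigma(\hat X,\hat\beta)$ requires the feedback structure $\hat\beta^h=\psi^b(\cdot,\hat X^h)$ of $\mathcal V^F$ together with the continuity of $\psi^b$. Continuity of $\hat M$ follows because the jumps of $\hat M^h$ are $O(h)$; this uses that Hawkes jumps are carried by $\hat H^h$, not by $\hat M^h$.

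Item 6 is the most delicate step. We would represent $\hat H^h$ through the discrete jump counting measure $\hat N^h$, with mark law $\mathrm m^h\to\mathrm m$ weakly and jump probability $\lambda^h\Delta t^h+o(\Delta t^h)$. We then show that the limit $\hat N$ has compensator $\mathbf 1_{\{\theta\le\lambda_\infty(s)+\bar d\hat{\bar\varsigma}_s^\top\}}\,ds\,d\theta\,\mathrm m(dz)$ by the same test-function argument applied to $\hat N^h(f)-\int f\,\lambda^h\,\mathrm m^h\,d\hat T^h$ for bounded continuous $f$. A Poisson random measure $\Pi$ is then constructed, enlarging the space if necessary, so that $\hat N$ is its thinning. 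Finally, the continuity of $\chi$ in $(x,g)$, the convergence $\chi_h\to\chi$, and the feedback form of $\hat\gamma^h$ give the stated integral representation of $\hat H$.

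The expected difficulty is that the time change $\hat T$ may be flat, so $\hat N$ lives in rescaled time. We would verify that no jumps occur during intervals where $\hat T$ is flat, which holds because only diffusion steps can trigger jumps. If the construction of $\Pi$ proves awkward, a fallback is to identify $\hat H$ only in law, through its compensator, which suffices for the later value-function convergence.
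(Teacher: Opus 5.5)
Your route is the paper's: you identify each component of the a.s.\ limit from Corollary~\ref{subseqcv} item by item, following \cite[Section 5]{budhiraja2007convergent}. On item 1 you are more careful than the paper, passing the $1$-Lipschitz property through the uniform limit of $\hat T^h$. However, one step in item 6 fails.

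Your test-function argument, applied to $\hat N^h(f)-\int f\,\lambda^h\,\mathrm m^h\,d\hat T^h$, yields in the limit the compensator $\mathbf 1_{\{\theta\le\lambda_\infty(\hat T(s))+\bar d\hat{\bar\varsigma}_s^\top\}}\,d\hat T(s)\,d\theta\,\mathrm m(dz)$. It does not yield the $ds\,d\theta\,\mathrm m(dz)$ version you announce. The two differ exactly on flat stretches of $\hat T$, and these are generically nondegenerate. A lump of size $a$ in the limiting singular control (or reflection) comes from about $a/h$ consecutive steps with $\pi^h_i\neq 0$. That is a rescaled interval of length $a$ on which $\hat T$ is constant. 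As you note yourself, no Hawkes jump can occur there, because those transitions are deterministic.

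A $ds$-compensator would give such an interval mass $\int\lambda\,ds>0$ wherever the intensity is positive. So your two claims are incompatible, and no Poisson measure with intensity $ds\,d\theta\,\mathrm m(dz)$ can have $\hat N$ as its thinning. In rescaled time the whole Hawkes block $(\hat{\bar\varsigma},\hat N)$ runs on the clock $\hat T$. Item 6 can therefore only be obtained against a time-changed Poisson measure with intensity $d\hat T(s)\,d\theta\,\mathrm m(dz)$, with $\lambda_\infty$ evaluated at $\hat T(s)$.

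The representation against the original $\Pi(ds,d\theta,dz)$ appears only after undoing the time change, for $\tilde H=\hat H\circ\overline T$, via Lemma~\ref{lm:measure_theory} as in Theorem~\ref{limitthm}. That is also where the enlargement of the space carrying $\Pi$ belongs. The paper's one-line proof of item 6 does not address this either. Your fallback, identification in law through the compensator, does not avoid the problem, because it still needs the correct compensator.

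There is a smaller gap in item 3. The bound $\sup_h\mathbb E\sup_t|\hat Y^h_t|^2<\infty$ gives uniform integrability of $\hat M^h_t$, which is enough for the martingale property. But passing to the limit in $\mathbb E\big[\Phi\big((\hat M^h_t)^2-(\hat M^h_s)^2-\int_s^t|\sigma|^2\,d\hat T^h\big)\big]$ requires uniform integrability of $(\hat M^h_t)^2$, hence a moment of order strictly above $2$. This is easy to supply in the bounded setting. Here $\sigma$ is bounded on $[-L,L]\times B$, so $\langle\hat M^h\rangle_t\le C\,t$ up to the vanishing error $\hat\varepsilon^h_t$. Since the jumps of $\hat M^h$ are $O(h)$, the Burkholder--Davis--Gundy (Rosenthal) inequality then gives $\sup_h\mathbb E|\hat M^h_t|^4<\infty$ for each fixed $t$. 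The paper's argument has the same looseness: it infers the quadratic variation from a second-moment bound.
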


\begin{proof}

\begin{enumerate}
\item From Definition \ref{def:rescale_hatT}, the derivative of $\hat{T} \geq 0$ over the entire domain, so $\hat{T}$ is nondecreasing.
Additionally, since $\frac{\hat{T}(y) - \hat{T}(x)}{y - x} \in \{ 0, 1\}$ for $y$ close to $x$ (i.e., adjacent), it is obvious that $|\hat{T}(y) - \hat{T}(x)| \leq |y - x|$ for all such $y$.
Thus, $\hat{T}$ is Lipschitz continuous with coefficient 1.

\item Since $(\hat{X}^h, \hat{T}^h) \to (\hat{X}, \hat{T})$ almost surely, we note that $\int_0^{t} \mu( \hat{X}^h_s)d\hat{T}^h(s) \to \int_0^{t} \mu( \hat{X}_s)d\hat{T}(s)$ almost surely.
As a consequence of the dominated convergence theorem, since $\mu$ satisfies \textbf{(L)} together with Lemma \ref{lm: X_moments} we get the convergence in 2.

\item We show $\hat{M}$ has continuous paths and is an $( \hat{\mathcal{F}}_t)_{t\leq T}$-martingale with quadratic variation \eqref{QV}. From \eqref{eq:local_consistency_E} and \eqref{eq:local_consistency_cov} we deduce that there exists $\zeta \in (0,\infty)$ such that for all $h\geq 0$, $u\in(0,T]$, \[\sup_{t\leq u} | \hat{M}^h_t - \hat{M}^h_{t-}|\leq 2\zeta h.\]
Consequently, $\int_0^T e^{-u} (1 \wedge \sup_{t\leq u} | \hat{M}^h_t - \hat{M}^h_{t-}|)du \leq 2\zeta h$, for $h$ small enough. 
Therefore,
    $$\lim_{h\to 0} \int_0^T e^{-u} (1 \wedge \sup_{t\leq u} | \hat{M}^h_t - \hat{M}^h_{t-}|)du = 0.$$
Combining this convergence with $\hat{M}^h \to \hat{M}$, we deduce from \cite[Theorem 3.10.2]{ethier2009markov} that $\hat{M}$ has continuous paths.
We now note that the quadratic variation of $\hat{M}^h$ is
    \begin{equation*}\label{eq:pf_quad_var}
        \langle \hat{M}^h\rangle (t) = \int_0^{t } \left \lvert \sigma(\hat{X}^h_s, \hat{\beta}^h_s )\right \rvert^2 d\hat{T}(s) + \hat{\varepsilon}^h_t,
    \end{equation*}
with $\hat{\varepsilon}^h_t$ satisfying 
    \begin{equation*}
        \lim_{h\to 0} \mathbb{E}\left[ \sup_{u\leq T} |\hat{\varepsilon}^h_u|^\ell \right] = 0
    \end{equation*}
for all $\ell\geq 1$.
From Burkholder-Davis-Gundy inequality, \textbf{(L)} together with Lemma \ref{lm: X_moments}, there exists some constant $\alpha>0$ such that:
    \begin{equation*}
        \mathbb{E}[] | \hat{M}^h_t|^2] \leq \alpha \left[  T^2 + \mathbb{E}[\sup_{0 \leq u \leq t} |\hat{\varepsilon}^h_t|^2] \right].
    \end{equation*}
So $\{ (\hat{M}^h_t), h >0 \}$ is uniformly integrable and from [Theorem 3.10.2 in \cite{ethier2009markov}] $\hat{M}$ is consequently an $\{ \hat{\mathcal{F}}_t\}$-martingale with quadratic variation given by \eqref{QV}.

\item  The fourth property follows from $\hat{\xi}^h$ being nondecreasing and $| \hat{\xi}^h_{t+s} - \hat{\xi}^h_t | \leq s + h$.

\item Since $\mathbb{P}( X^h_{n+1}=x|\mathcal{F}^h_{n}) = \mathbb{P}( X^h_{n+1}=x|X^h_{n}, \mathcal{F}^h_{n}) = p^h(X^h_{n},x|\pi^h_{n})$, $(\hat{X}, \hat{R})$ in the reflection definitions can be replaced with $(\hat{X}^h, \hat{R}^h)$. Hence, since $\hat{X}^h\in [-L, L]$ the statement follows from the definition of $\hat{X}^h$ and Skorohod map properties in Definition \ref{def:2side_Skorokhod_map}.

\item This point directly follows from discretization of the $\Pi$-measure stochastic integral noting that the Voronoi-based discretization of $\text{m}^h$ weakly converges to $\text{m}$ and that $\chi$ is continuous together with the tightness properties of $\hat{X}^h$ and $\hat\gamma^h$.
\item As a consequence of all the previous points, we get the desired decomposition for the limit point $\hat X$. 
\end{enumerate}
\end{proof}

We want to transfer this result to the primal processes and especially the convergence of $X^h$ toward $X$. To achieve this goal, we define the inverse time process as follow
\[\overline{T}(t) = \inf\{ s \ : \ \hat{T}(s) > t \}.\]
We first give some properties of this process and refer to \cite[Equation (46)]{budhiraja2007convergent} for the proof of these results. 
\begin{lemma}[Properties of $\overline T$]\label{lemmabarT}For any $t\in [0,T]$, the process $\overline T$ is nondecreasing, right-continuous, converges to $T$ when $t$ goes to $T$ and satisfies the following properties 
\begin{enumerate}
\item $t\leq \overline{T}(t) \leq T$,
\item $\hat{T}(\overline{T}(t))=t$ and $\overline{T}(\hat{T}(t)) \geq t$
\item $\hat{T}(s)\in [0,t] \Leftrightarrow s\in[0, \overline{T}(t)].$
\end{enumerate}
\end{lemma}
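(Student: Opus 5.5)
The plan is to prove every item directly from the definition $\overline{T}(t)=\inf\{s:\hat T(s)>t\}$. The only properties of $\hat T$ we use are those of item 1 of Theorem \ref{thm:limit_point_prop}: it is continuous, nondecreasing, $1$-Lipschitz, and $\hat T(0)=0$. We adopt the convention that the infimum of the empty set is the right endpoint $T$ of the time domain, which is the reading under which the bound $\overline T(t)\le T$ makes sense. The whole proof is elementary real analysis on generalized inverses, in the spirit of Lemma \ref{lm:measure_theory}, and does not need the stochastic structure.

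\emph{Monotonicity and right-continuity.} If $t\le t'$ then $\{s:\hat T(s)>t'\}\subseteq\{s:\hat T(s)>t\}$, so $\overline T(t)\le\overline T(t')$. For right-continuity we use the identity
\[
\{s:\hat T(s)>t\}=\bigcup_{t'>t,\ t'\in\mathbb Q}\{s:\hat T(s)>t'\}.
\]
The infimum of a union equals the infimum of the infima, which by monotonicity is $\lim_{t'\downarrow t}\overline T(t')$; hence $\overline T(t)=\overline T(t+)$.

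\emph{Item 1.} Since $\hat T(0)=0$ and $\hat T$ is $1$-Lipschitz and nondecreasing, we have $\hat T(s)\le s$. So $\hat T(s)>t$ forces $s>t$, which gives $\overline T(t)\ge t$. The upper bound $\overline T(t)\le T$ is the empty-set convention.

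\emph{Item 2.} First, for every $s<\overline T(t)$ we have $\hat T(s)\le t$, and continuity of $\hat T$ gives $\hat T(\overline T(t))\le t$. Second, when the set $\{s:\hat T(s)>t\}$ is nonempty, there are $s_n\downarrow\overline T(t)$ with $\hat T(s_n)>t$, and continuity gives $\hat T(\overline T(t))\ge t$. When the set is empty, $t$ is at least the maximum of $\hat T$; on the range $t\in[0,\hat T(T)]$ this means $t=\hat T(T)=\hat T(\overline T(t))$. For the second claim of item 2, monotonicity shows that $\hat T(s)>\hat T(t)$ implies $s>t$, so $\overline T(\hat T(t))\ge t$.

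\emph{Item 3.} If $s\le\overline T(t)$, then monotonicity and item 2 give $\hat T(s)\le\hat T(\overline T(t))=t$. Conversely, suppose $\hat T(s)\le t$ but $s>\overline T(t)$. By definition of the infimum there is $s'\in[\overline T(t),s)$ with $\hat T(s')>t$, and monotonicity then gives $\hat T(s)>t$, a contradiction.

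\emph{Limit at $T$.} $\overline T$ is nondecreasing and squeezed by $t\le\overline T(t)\le T$, so $\overline T(t)\to T$ as $t\to T$.

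The main obstacle is not any single estimate but the domain bookkeeping. The identity $\hat T(\overline T(t))=t$ requires $t$ to lie in the range $[0,\hat T(T)]$ of $\hat T$, while the lemma is stated for $t\in[0,T]$. I will handle this by restricting the statement to $t\in[0,\hat T(T)]$, where item 2 holds as proved. Outside that range the set $\{s:\hat T(s)>t\}$ is empty, so the empty-set convention gives $\overline T(t)=T$ and only the weaker relation $\hat T(\overline T(t))=\hat T(T)\le t$ survives. This should be stated explicitly, in line with \cite[Equation (46)]{budhiraja2007convergent}.
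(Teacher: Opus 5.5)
Your proof is correct. It differs from the paper mainly in that the paper gives no argument: it states the lemma and defers to \cite[Equation (46)]{budhiraja2007convergent}.

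You instead derive each item directly from $\overline T(t)=\inf\{s:\hat T(s)>t\}$. You use only that $\hat T$ is continuous, nondecreasing and $1$-Lipschitz with $\hat T(0)=0$. The last fact is not part of item 1 of Theorem \ref{thm:limit_point_prop}, but it passes to the limit from Definition \ref{def:rescale_hatT}(a).

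The citation buys brevity but hides the setting. In the cited reference the rescaled clock is defined on $[0,\infty)$ and is unbounded. So $\{s:\hat T(s)>t\}$ is never empty, and $\hat T(\overline T(t))=t$ holds for every $t$, with no claim that $\overline T$ stays below a fixed horizon.

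Your direct proof checks whether the transplanted statement holds, and it shows that it does not as written. For $t\in(\hat T(T),T]$, the bound $\overline T(t)\le T$ in item 1 forces $\hat T(\overline T(t))\le \hat T(T)<t$. So items 1 and 2 cannot both hold there, whatever convention is chosen for $\inf\emptyset$. Moreover, $\hat T(T)<T$ is exactly the case in which singular control or reflection is active, which is the case of interest.

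Restricting the identity to $[0,\hat T(T)]$ is therefore a genuine correction, not bookkeeping. It also matches how the inverse is used afterwards in Lemmas \ref{lm:measure_theory} and \ref{lem:inverse_convergence}: there it lives on $[0,\mathcal S^h]$ with $\mathcal S^h=\hat T^h(T)$, and it is extended by the value $T$ beyond that, which is your empty-set convention.

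One small refinement: the forward direction of item 3 needs only $\hat T(\overline T(t))\le t$, which you proved for every $t$. So item 3 holds on all of $[0,T]$, as do monotonicity, right-continuity, item 1 and the limit at $T$. Only the equality in item 2 requires the restricted range.
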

We now similarly introduce the generalized inverse function of $\hat T^h$, the discretized version of $\hat T$, as follows
$$\bar{T}^h(u):=\inf\{s\in[0,T]:\ \hat T^h(s)>u\}.$$ This process satisfies the same properties of $\bar T$ presented above in Lemma \ref{lemmabarT}. Additionally, we have the following convergence result for this generalized inverse.
\begin{lemma}[Convergence properties of the generalized inverse]\label{lem:inverse_convergence}

Suppose that $\mathcal{S}^h\to \mathcal{S}$ as $h\downarrow0$ and that $\hat T^h\to\hat T$ uniformly on $[0,T]$.  
Then:
\begin{enumerate}
    \item For every $u\in[0,\mathcal{S}]$ which is \emph{not} a discontinuity point of $\bar{T}$ (equivalently a point where $\hat T$ is strictly increasing on some neighborhood), we have
    $$\bar{T}^h(u)\to \bar{T}(u)\quad\text{as }h\downarrow0.$$
    \item If, moreover, $\hat T$ is strictly increasing (hence $\bar{T}$ is continuous) on $[0,\mathcal{S}]$, then $\bar{T}^h\to \bar{T}$ uniformly on $[0,\mathcal{S}]$ (after extending $\bar{T}^h$ by $\bar{T}^h(u)=T$ for $u\in(\mathcal{S}^h,\mathcal{S}]$ when $\mathcal{S}^h<\mathcal{S}$).
\end{enumerate}
\end{lemma}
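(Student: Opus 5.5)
The plan is a deterministic argument on generalized inverses of nondecreasing continuous functions. Throughout we use that $\hat T^h,\hat T$ are continuous, nondecreasing and $1$-Lipschitz (Remark \ref{Tequicontinuous} and Theorem \ref{thm:limit_point_prop}), and that $\bar T^h$, $\bar T$ satisfy the properties of Lemma \ref{lemmabarT}. Set $\varepsilon_h:=\sup_{s\in[0,T]}|\hat T^h(s)-\hat T(s)|$, so $\varepsilon_h\to0$. For part 1 we fix $u$ at which $\bar T$ is continuous and prove a $\liminf$ lower bound and a $\limsup$ upper bound separately.

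\emph{Upper bound.} Fix $\eta>0$ and set $s:=\bar T(u+\eta)$ (for $u<\mathcal S$ and $\eta$ small). By Lemma \ref{lemmabarT} (2) and continuity, $\hat T(s)=u+\eta$. Hence $\hat T^h(s+\delta)\ge \hat T(s)-\varepsilon_h\ge u+\eta-\varepsilon_h>u$ once $\varepsilon_h<\eta$, for any small $\delta>0$. By the definition of the infimum, $\bar T^h(u)\le \bar T(u+\eta)$ for $h$ small. Thus $\limsup_h\bar T^h(u)\le \bar T(u+\eta)$, and letting $\eta\downarrow0$ gives $\limsup_h\bar T^h(u)\le \bar T(u)$ by right-continuity of $\bar T$.

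\emph{Lower bound.} Take $s<\bar T(u-\eta)$, so that $\hat T(s)\le u-\eta$. Then $\hat T^h(s)\le u-\eta+\varepsilon_h<u$, and since $\hat T^h$ is nondecreasing, $\hat T^h(r)\le \hat T^h(s)<u$ for all $r\le s$. Hence $\bar T^h(u)\ge s$. Letting $s\uparrow\bar T(u-\eta)$ gives $\liminf_h\bar T^h(u)\ge \bar T(u-\eta)$. Letting $\eta\downarrow0$ gives $\bar T(u-)=\bar T(u)$, by the continuity assumption at $u$.

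The \textbf{main obstacle} is precisely this left limit: at a jump of $\bar T$ (a flat piece of $\hat T$, i.e.\ a singular or reflection period) the two bounds do not meet, which is why continuity at $u$ is required. The equivalence with "$\hat T$ strictly increasing near $\bar T(u)$" follows because a jump of $\bar T$ at $u$ is exactly a nondegenerate interval on which $\hat T\equiv u$. The endpoints $u=\mathcal S$ and $u=0$ are handled with the convention $\bar T\le T$ and the extension stated in part 2, using $\mathcal S^h\to\mathcal S$ (Lemma \ref{lem:uniform_endpoint} gives uniform control near $T$).

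For part 2, if $\hat T$ is strictly increasing then $\bar T$ is continuous on the compact interval $[0,\mathcal S]$, hence uniformly continuous with some modulus $\omega$. The argument above then gives, uniformly in $u$,
\[
\bar T(u-\eta)\le \bar T^h(u)\le \bar T(u+\eta)\quad\text{whenever } \varepsilon_h+|\mathcal S^h-\mathcal S|<\eta,
\]
with $u\pm\eta$ truncated to $[0,\mathcal S]$ and the extension $\bar T^h=T$ on $(\mathcal S^h,\mathcal S]$. This yields $\sup_u|\bar T^h(u)-\bar T(u)|\le\omega(\eta)$, which tends to $0$. Alternatively, pointwise convergence of monotone functions to a continuous limit on a compact interval is automatically uniform (Dini/Pólya), which provides a second route.
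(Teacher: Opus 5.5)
Your proof is correct. It is the mirror image of the paper's argument rather than the same argument.

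\textbf{How the two routes differ.} The paper perturbs the \emph{time} variable around $s_0=\bar T(u)$. It compares $\hat T(s_0\pm\omega)$ with $u$, transfers this to $\hat T^h$ by uniform convergence, and concludes $\bar T^h(u)\in[s_0-\omega,s_0+\omega]$. As written it records only $\hat T(s_0-\omega)\le u\le\hat T(s_0+\omega)$. Combined with $o(1)$ errors, these non-strict inequalities do not give the conclusion. One needs the strict inequalities $\hat T(s_0-\omega)<u<\hat T(s_0+\omega)$, i.e.\ a margin, and that margin is what the strict-monotonicity hypothesis supplies.

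You instead perturb the \emph{level} $u$, which builds the margin $\eta$ in from the start. This gives the explicit sandwich $\bar T(u-\eta)\le\bar T^h(u)\le\bar T(u+\eta)$ whenever $\varepsilon_h<\eta$. It holds at every interior $u$ using only continuity and monotonicity of $\hat T$; continuity of $\bar T$ at $u$ is used only to close it.

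\textbf{What your route buys.}
\begin{enumerate}
\item The condition you actually use is that $\hat T^{-1}(\{u\})$ is a single point. This is weaker than strict monotonicity of $\hat T$ on a neighbourhood of $\bar T(u)$. So the ``equivalence'' you assert is really a one-way implication, which is harmless since your proof covers the weaker condition.
\item Because the sandwich is uniform in $u$, part 2 follows at once from the modulus of continuity of $\bar T$, or from P\'olya's theorem. The paper's sketch of part 2 invokes the modulus of continuity of $\hat T$. What is really needed there is a uniform lower bound $\inf_s[\hat T(s+\omega)-\hat T(s)]>0$, coming from strict monotonicity and compactness.
\end{enumerate}

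\textbf{One point to tighten: the endpoint $u=0$.} There is no $\bar T(0-\eta)$ at $u=0$. Your appeal to ``the extension stated in part 2'' does not help, since that extension concerns $u>\mathcal S^h$. What is needed is $\bar T(0)=0$, after which the lower bound $\bar T^h(0)\ge 0$ is trivial. This is forced by the strict-monotonicity reading of the hypothesis, or by the convention $\bar T\equiv0$ on $(-\infty,0)$. Mere right-continuity of $\bar T$ at $0$ is not enough. For example, take $\hat T\equiv0$ on $[0,a]$ and $\hat T^h(s)=(1-h)\hat T(s)+hs$. Then $\hat T^h\to\hat T$ uniformly, but $\bar T^h(0)=0\not\to a=\bar T(0)$.
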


\begin{proof}
\begin{enumerate}

\item  Fix $u\in[0,\mathcal{S}]$ which is not a discontinuity point of $\bar{T}$. Since $\hat T^h\to\hat T$ uniformly and $\mathcal{S}^h\to \mathcal{S}$, for any $\varepsilon>0$ there exists $h_1>0$ such that for all $0<h<h_1$ we have $\sup_{s\in[0,T]}|\hat T^h(s)-\hat T(s)|<\varepsilon$ and $|\mathcal{S}^h-\mathcal{S}|<\varepsilon$. By the standard definition of the inverse of a monotone function, one can show
$$\hat T\big(\bar{T}(u)-\omega\big)\le u \le \hat T\big(\bar{T}(u)+\omega\big)$$
for all sufficiently small $\omega>0$. Uniform convergence of $\hat T^h$ implies the inequalities
$$\hat T^h\big(\bar{T}(u)-\omega\big)\le u + o(1),\qquad
\hat T^h\big(\bar{T}(u)+\omega\big)\ge u - o(1),$$
and these inequalities imply $\bar{T}^h(u)\in [\bar{T}(u)-\omega,\,\bar{T}(u)+\omega]$ for all small $h$. Letting $\omega\downarrow0$ yields $\bar{T}^h(u)\to \bar{T}(u)$.

\item  If $\hat T$ is strictly increasing then $\bar{T}$ is continuous on $[0,\mathcal{S}]$. Uniform convergence $\hat T^h\to\hat T$ and $\mathcal{S}^h\to \mathcal{S}$ then imply equicontinuity and uniform convergence of the inverses. Concretely, for any $\varepsilon>0$ choose $\delta>0$ so that $|s-s'|<\delta$ implies $|\hat T(s)-\hat T(s')|<\varepsilon$, then use uniform convergence to transfer this modulus to $\hat T^h$ uniformly in $h$, and the standard inverse-function inequality (monotonicity-based) to conclude the desired uniform convergence of $\bar{T}^h$ to $\bar{T}$ on $[0,\mathcal{S}]$.
\end{enumerate}
\end{proof}
This lemma induces the convergence of the compressed process $\tilde X_t^h:=\hat X^h_{\bar T(t)}$, as below.
\begin{lemma}[Convergence of compressed discrete processes]\label{cor:compressed_convergence}
Suppose that $\hat X^h \to \hat X$ uniformly on $[0,\mathcal S]$. Then $\tilde X^h \to \tilde X:= \hat X\circ \bar{T}
\quad\text{uniformly on }[0,T].$
\end{lemma}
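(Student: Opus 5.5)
The argument is a triangle inequality. The only thing to check is that composing with $\bar T$ does not destroy uniform closeness. Note that the statement defines $\tilde X^h_t:=\hat X^h_{\bar T(t)}$ using the \emph{limiting} inverse $\bar T$, not $\bar T^h$. So the compressed processes $\tilde X^h$ and $\tilde X$ are evaluated at exactly the same random times $\bar T(t)$, and we can write
\[
\sup_{t\in[0,T]}\big|\tilde X^h_t-\tilde X_t\big|
=\sup_{t\in[0,T]}\big|\hat X^h_{\bar T(t)}-\hat X_{\bar T(t)}\big|
\le \sup_{s\in \bar T([0,T])}\big|\hat X^h_s-\hat X_s\big| .
\]

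The first step is therefore to check that the range $\bar T([0,T])$ is contained in the interval $[0,\mathcal S]$ on which the uniform convergence $\hat X^h\to\hat X$ is assumed. From Lemma \ref{lemmabarT}, $\bar T$ is nondecreasing and takes values in the domain of the rescaled processes. Under the paper's convention that $\hat T$ maps $[0,T]$ onto $[0,\mathcal S]$, with $\bar T$ its generalized inverse, every value $\bar T(t)$ lies in the parameter interval of $\hat X$. With the roles of $T$ and $\mathcal S$ as in Lemma \ref{lm:measure_theory}, this containment is immediate.

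Once the containment holds, the right-hand side is bounded by $\sup_{s\in[0,\mathcal S]}|\hat X^h_s-\hat X_s|$, which tends to $0$ by hypothesis. This gives the uniform convergence on $[0,T]$. No continuity of $\bar T$ is needed, since we never compare $\bar T^h$ with $\bar T$. Measurability and adaptedness of $\tilde X^h$ follow because $\bar T(t)$ is a stopping time for the limit filtration, as a generalized inverse of the continuous adapted nondecreasing process $\hat T$.

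The main obstacle is the bookkeeping of time domains. $\hat X^h$ lives on $[0,\mathcal S^h]$, while the hypothesis is stated on $[0,\mathcal S]$. If $\mathcal S^h<\mathcal S$, I would extend $\hat X^h$ by its terminal value on $(\mathcal S^h,\mathcal S]$, as in Lemma \ref{lem:inverse_convergence}(2), and read the hypothesis for this extension. If one instead wants the statement with $\bar T^h$ in place of $\bar T$, I would add the term $|\hat X^h_{\bar T^h(t)}-\hat X^h_{\bar T(t)}|$. This term would be controlled at continuity points using Lemma \ref{lem:inverse_convergence}, together with the continuity of $\hat X$ away from jumps and the asymptotic equicontinuity given by the tightness of $\hat X^h$.
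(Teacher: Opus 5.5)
Your proof is correct for the lemma as literally stated, but it does not follow the paper's route. You read $\tilde X^h_t:=\hat X^h_{\bar T(t)}$, the definition given just before the lemma, at face value. Both processes are then evaluated along the same limiting time change, and the claim reduces to $\sup_{t\le T}|\tilde X^h_t-\tilde X_t|\le\sup_{s\le\mathcal S}|\hat X^h_s-\hat X_s|$. The containment you call immediate is $\bar T(t)\le T\le\mathcal S$; the last inequality comes from $\mathcal S^h=T+h\sum_i\mathbf 1_{\pi^h_i\neq0}\ge T$ in the proof of Lemma \ref{lm:mathfrakS}.

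The paper's proof instead composes with $\bar T^h$. It takes $\bar T^h\to\bar T$ uniformly from Lemma \ref{lem:inverse_convergence} and then invokes composition of uniformly convergent continuous functions, so it really targets $\hat X^h\circ\bar T^h\to\hat X\circ\bar T$. That is the statement needed downstream, because $\hat X^h\circ\bar T^h$ is, up to interpolation error, the original interpolation $X^h$, which is what Figure \ref{pic:convergenceX} and Corollary \ref{cor:cv} use. Your version concerns a fixed time change and gives no information on $X^h$ by itself. In exchange, your argument is airtight, while the paper's needs two properties that are not available here:
\begin{itemize}
\item Lemma \ref{lem:inverse_convergence}(2) requires $\hat T$ strictly increasing. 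This fails as soon as the limiting singular control or a reflection term jumps in real time, since $\hat T$ is then flat on an interval.
\item The composition step requires $\hat X$ continuous. This fails because of the Hawkes jumps carried by $\hat H$.
\end{itemize}

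The second point also undermines the sketch in your final paragraph. Suppose $\hat X$ jumps at a stretched time $s_0$. Uniform convergence forces $\hat X^h$ to jump at $s_0$ by nearly the same amount. Then $\hat X^h\circ\bar T^h$ and $\hat X\circ\bar T$ jump at the real times $\hat T^h(s_0)$ and $\hat T(s_0)$, which generally differ, and between these times the two processes differ by about the jump size. Tightness in $D$ controls only the c\`adl\`ag modulus, not the uniform one. So your extra term $|\hat X^h_{\bar T^h(t)}-\hat X^h_{\bar T(t)}|$ can be handled at continuity points or in the Skorokhod $J_1$ sense, but not uniformly in $t$.
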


\begin{proof}
From Lemma \ref{lem:inverse_convergence}, $\bar{T}^h\to \bar{T}$ uniformly. By assumption, $\hat{X}^h\to \hat{X}$ uniformly on the stretched domain. Standard composition results for uniform convergence of continuous functions imply
$
\hat X^h \circ \bar{T}^h \to \hat X \circ \bar{T},$
uniformly on $[0,T]$.
\end{proof}
Finally, the limit process $\tilde X$ as a c\`adl\`ag semi-martingale, admits a unique decomposition of the form
\begin{equation}\label{sdetildeX}\tilde X_t=x+\tilde B_t+\tilde M_t+\tilde\xi_t+\tilde H_t+\tilde R_t^+-\tilde R_t^-,\end{equation}
where $\tilde{B}$ is an absolutely continuous predictable process, $\tilde{\xi}$ and $\tilde{H}$ are nondecreasing, adapted, with finite variations, $\tilde{M}$ is a local martignale and $\tilde{R}^+,\tilde{R}^-$ solves the Skorokhod problem. We have the following identification theorem.

\begin{theorem}\label{limitthm} Considering the semi-martingale decomposition \eqref{sdetildeX} we have
    \begin{enumerate}
        \item $\tilde B_t = \int_0^{t} \mu( \tilde X_s)ds $.
        \item $\tilde M$ is a continuous $\{ \mathcal{F}_t \}$-martingale with quadratic variation
        \begin{equation*}
            \langle \tilde M \rangle_t = \int_0^{t} |\sigma( \tilde X_s, \beta_s)|^2ds .
        \end{equation*}
        There exists an enlargement of the probability space $(\Omega, \mathcal{F}, \mathbb{P})$ and of the filtration $\{ \mathcal{F}_t\}$ that supports a Wiener process $W$, which is a martingale with respect to the enlarged filtration and such that 
        \begin{equation*}
            \tilde M_t = \int_0^{t} \sigma(\tilde X_s, \tilde\beta_s)dW_s.
        \end{equation*}
      \item $\tilde{R}^+, \tilde{R}^-$ are nondecreasing continuous processes which satisfy 
            \begin{equation*}
                \int_0^{\hat\tau\wedge T} \textbf{1}_{\tilde X_t > -L }d\tilde{R}_t^+ = 0 \quad \quad \int_0^{\tau\wedge T}\textbf{1}_{ \tilde X_t < L }d\tilde{R}^-_t= 0
            \end{equation*}
        \item For all $t\in[0,T]$, $\tilde{H}_t = \int_0^{t}\int_{\mathbb{R}_+}\int_{\mathbb{R}_+}\chi(\tilde{X}_s, {\gamma}_s, z)\textbf{1}_{\theta\leq \lambda_\infty(s) + \bar{d}\bar{\varsigma}^\top_s}\Pi(ds,d\theta, dz)$.
    \end{enumerate}
\end{theorem}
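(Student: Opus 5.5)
The plan is to transport each identification in Theorem \ref{thm:limit_point_prop} from the rescaled clock to the original clock through the inverse time $\overline{T}$. We set $\tilde Z_t := \hat Z_{\overline T(t)}$ for $Z\in\{X,B,M,\xi,H,R^\pm\}$. The decomposition in point 7 of Theorem \ref{thm:limit_point_prop}, evaluated at $\overline T(t)$, then gives \eqref{sdetildeX}. By uniqueness of the semimartingale decomposition, it suffices to identify each term. The basic tool is Lemma \ref{lm:measure_theory}, applied with $\hat G=\hat T$ and $G=\overline T$, combined with the properties in Lemma \ref{lemmabarT}. In particular, $\hat T(\overline T(t))=t$, and $\hat T$ is flat exactly on the intervals where $\overline T$ jumps, i.e.\ the stretched intervals coming from singular or reflection steps.

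\textbf{Drift.} Point 2 of Theorem \ref{thm:limit_point_prop} gives $\tilde B_t=\int_0^{\overline T(t)}\mu(\hat X_s)\,d\hat T(s)$. Lemma \ref{lm:measure_theory} turns this into $\int_0^t \mu(\hat X_{\overline T(s)})\,ds=\int_0^t\mu(\tilde X_s)\,ds$. To use the lemma we need $\mu\circ\hat X$ bounded and measurable; this holds because $\hat X\in[-L,L]$ by point 7 and $\mu$ is continuous.

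\textbf{Martingale part.} Since $\overline T(t)$ is an $\hat{\mathcal F}$-stopping time for each $t$, optional sampling applied to $\hat M$ shows that $\tilde M$ is a martingale for $\mathcal F_t:=\hat{\mathcal F}_{\overline T(t)}$. Its quadratic variation is obtained from \eqref{QV} and Lemma \ref{lm:measure_theory} in the same way as the drift, which gives $\langle\tilde M\rangle_t=\int_0^t|\sigma(\tilde X_s,\tilde\beta_s)|^2ds$. Boundedness of $\hat X$ ensures uniform integrability. For continuity, $\hat M$ is constant on the flat pieces of $\hat T$, since \eqref{QV} gives zero increment of quadratic variation there. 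Hence the jumps of $\overline T$ produce no jumps of $\tilde M$, and $\tilde M$ is continuous. The Brownian representation then follows from the standard martingale representation theorem on an enlarged space, e.g.\ Ikeda–Watanabe (Theorem II.7.1'). There, $W$ is built as $\int \sigma^{-1}\mathbf 1_{\sigma\neq0}\,d\tilde M$ plus an independent Brownian motion on the degenerate set.

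\textbf{Reflection terms.} Time change preserves monotonicity and the support conditions in point 5 of Theorem \ref{thm:limit_point_prop}: $d\tilde R^\pm$ charges $\{t\}$ only if $d\hat R^\pm$ charges $[\overline T(t-),\overline T(t)]$, where $\hat X=\mp L$. Continuity requires more care. A reflection stretch could in principle make $\tilde R^\pm$ jump. I would argue as in \cite[Section 5]{budhiraja2007convergent}: during such a stretch $\hat X$ stays at the boundary while $\hat R$ moves, and the Skorokhod map characterization in Definition \ref{def:2side_Skorokhod_map} forces the jump to be the minimal push. That push is zero for a diffusion increment, but not if the stretch was triggered by a Hawkes jump landing outside $[-L,L]$. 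So continuity may hold only modulo jumps induced by $\tilde H$, and I would state and prove it in that form if necessary.

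\textbf{Jump term (main obstacle).} $\hat H$ is defined through the stretched clock, while the intensity $\lambda_\infty(s)+\bar d\bar\varsigma_s^\top$ in point 6 of Theorem \ref{thm:limit_point_prop} lives on the original clock. The plan is as follows. First, note that no Hawkes jumps occur during stretched intervals: those are steps with $\pi\neq0$, and jumps only enter through $p_{1,n}$ at diffusion steps. Hence $\tilde H$ has compensator $\int_0^t\int\chi(\tilde X_{s},\tilde\gamma_s,z)\,\lambda_s\,\text{m}(dz)\,ds$, obtained by the time change of Lemma \ref{lm:measure_theory} together with the weak convergence $\text{m}^h\to\text{m}$. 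Then I would identify the counting measure with a thinning of some Poisson measure $\Pi$, again on an enlarged space, using the Grigelionis/Jacod representation of a marked point process with absolutely continuous compensator. This representation step, together with the left-limit subtlety $\tilde X_{s-}$, is where I expect the argument to be most delicate.
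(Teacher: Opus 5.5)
Your route is the paper's: transport Theorem~\ref{thm:limit_point_prop} to the original clock through $\overline T$, using Lemma~\ref{lm:measure_theory} and Lemma~\ref{lemmabarT} for the drift and the quadratic variation, and optional sampling for $\tilde M$. On items 1 and 2 you are more complete than the paper. Continuity of $\hat M$ alone does not give continuity of $\tilde M=\hat M\circ\overline T$, because $\overline T$ jumps. Your remark that $\hat M$ is constant on the flat stretches of $\hat T$ (zero increment in \eqref{QV}) is exactly the missing step, and so is the Brownian representation on an enlarged space. On item 4 you genuinely differ. The paper only time-changes item 6 of Theorem~\ref{thm:limit_point_prop}, which presupposes that the limit of Corollary~\ref{subseqcv} lives on the space carrying the original $\Pi$. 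Your compensator identification followed by a Grigelionis-type representation does not need this and is the more robust route. Its price is that you must construct $W$ and $\Pi$ jointly on the enlargement.

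The genuine gap is item 3, which your proposal leaves open; the paper's ``by similar argument'' does not settle it either. Your suspicion about continuity is right, but the fallback you propose, continuity modulo jumps of $\tilde H$, is still false, because you overlooked the singular control. Theorem~\ref{thm:limit_point_prop} gives continuity of $\hat\xi$ only on the stretched clock; on the original clock $\tilde\xi$ can jump. Take a time $t$ with no Hawkes jump. There $\tilde B,\tilde M$ are continuous and $\tilde X_t\le L$, so \eqref{sdetildeX} forces $\Delta\tilde R^-_t\ge \tilde X_{t-}+\Delta\tilde\xi_t-L$, whatever reflection rule is used.

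Such pushes through the wall are admissible in the chain as written. Alternate $\pi=1$ at $L-h$ and $\pi=-$ at $L$ about $1/h$ times within one run of non-diffusion steps. This keeps $X^h$ in $\mathbb S^h_L$ and leaves $\hat T$ flat, and in the limit $\tilde\xi$ and $\tilde R^-$ both jump by about $1$ while $\tilde X$ does not move. So the minimal-push argument cannot give continuity in general. What you can prove is item 3 for controls that never push beyond the wall, i.e.\ $\tilde X_{t-}+\Delta\tilde\xi_t\le L$. This restriction is harmless for the optimization: since $\phi\ge 0$, the excess push only adds cost and is undone by $R^-$.

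Separately, you must fix how the chain handles a Hawkes jump that would leave $[-L,L]$; the paper does not say. If the overshoot is removed by reflection steps, as the description after \eqref{eq:constrained_dyn} suggests, then $\tilde R^\pm$ jumps there. If it is clipped inside the jump transition, continuity survives, but item 4 must then be stated with the clipped jump map rather than $\chi$.
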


\begin{proof}[Proof of Theorem \ref{limitthm}]
First, note that
\begin{equation*}
    \int \textbf{1}_{ [\bar{T}(\tau-), \bar{T}(\tau)]} d\hat{T}(s) = 0. 
\end{equation*}
Then, from Theorem \ref{thm:limit_point_prop} and Lemma \ref{lm:measure_theory}, we immediately get Property 1. Now recall that $\hat{M}$ is a continuous $\hat{\mathcal{F}}_t$-martingale.
Since $\hat{M}$ has continuous path and $\bar{T}(t)\leq T$ a.s., we see that $\lim_{n\to \infty}\hat{M}_{\bar{T}(t)\wedge n} = \hat{M}_{\bar{T}(t)} = \tilde M_t$ a.s. leading to Property 2 by Assumption \textbf{(L)} and Lemma \ref{lm: X_moments}. Properties 3 and 4 follow by similar argument than Theorem \ref{thm:limit_point_prop} by using Lemma \ref{lm:measure_theory} and the properties of $\chi$.
\end{proof}
As a direct consequence of \eqref{sdetildeX} and the uniqueness of the solution to \eqref{eq:constrained_dyn} we have the final corollary showing, in particular, the convergence of $X^h$ to $X$ and summarized in Figure \ref{pic:convergenceX}.  
\begin{corollary}\label{cor:cv}
For any $(\beta,\gamma,\tau,\xi)\in \mathcal V\times \Xi$ we have
\[\tilde{\mathcal X}_t=\mathcal X_t,\; \text{ for any } t\leq T,\quad \mathcal X=(X,H,B,M,H,R^+,R^-).\]
\end{corollary}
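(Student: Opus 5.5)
The identification in Corollary \ref{cor:cv} follows by a uniqueness argument: show that the limit tuple $\tilde{\mathcal X}$ solves the constrained system \eqref{eq:constrained_dyn}, driven by the same Brownian motion, Poisson measure and controls as $\mathcal X$, and conclude by pathwise uniqueness.

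We fix $(\beta,\gamma,\tau,\xi)$ and use the a.s.\ convergent subsequence of Corollary \ref{subseqcv}. Lemma \ref{cor:compressed_convergence} then transports it to the compressed processes $\tilde{\mathcal X}^h\to\tilde{\mathcal X}$.

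\textbf{Step 1: the limit solves the constrained SDE.} By Theorem \ref{limitthm}, $\tilde X$ satisfies
\[
\tilde X_t=x+\int_0^t\mu(\tilde X_s)\,ds+\int_0^t\sigma(\tilde X_s,\tilde\beta_s)\,dW_s+\tilde\xi_t+\tilde H_t+\tilde R^+_t-\tilde R^-_t,
\]
where $\tilde H$ is the Hawkes jump integral and $\tilde R^\pm$ satisfy the complementarity conditions of Definition \ref{def:2side_Skorokhod_map}. For $\tilde{\bar\varsigma}$, we pass to the limit in the representation \eqref{eq:varsigmaproof}. Since $\varrho$ and the exponential weights are continuous, and $\mathrm m^h\to\mathrm m$ weakly, we get
\[
d\tilde\varsigma^{(k)}_t=-q_k\tilde\varsigma^{(k)}_t\,dt+\int\!\!\int\varrho(z)\mathbf 1_{\theta\le\lambda_\infty(t)+\bar d\tilde{\bar\varsigma}^\top_t}\,\Pi(dt,d\theta,dz)-d\tilde{\mathcal R}^{(k)}_t .
\]
Hence $(\tilde X,\tilde{\bar\varsigma})$, with reflections $(\tilde R^\pm,\tilde{\mathcal R})$, is a weak solution of \eqref{eq:constrained_dyn} for the same controls. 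The feedback structure of $\mathcal V^F$ (Remark \ref{rm:tight}) ensures that $\tilde\beta=\beta$ and $\tilde\gamma=\gamma$ as processes of the state.

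\textbf{Step 2: pathwise uniqueness.} Because of the reflection terms, we prove uniqueness of \eqref{eq:constrained_dyn} directly rather than cite a standard theorem. We take two solutions on the same space and first treat $\bar\varsigma$ between jump times. Under the thinning construction, $\Pi$ has a.s.\ finitely many atoms in $[0,T]\times[0,\Lambda_L]\times\mathfrak P$, since $\lambda\le\Lambda_L$ by the remark after Proposition \ref{prop:VL_to_V}. So we can argue inductively over these atoms.

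Between atoms, $\bar\varsigma$ is deterministic ODE flow plus Skorokhod reflection, and is therefore unique. At an atom $(t,\theta,z)$, acceptance depends only on $\bar\varsigma_{t-}$, so both solutions accept the same jumps.

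For $X$, we apply It\^o's formula to $|X^1-X^2|^2$. Three ingredients control it: the Lipschitz bounds in \textbf{(L)}, the Lipschitz continuity of the two-sided Skorokhod map on $[-L,L]$ (Kruk et al.), and the fact that the singular control $\xi$ is common to both solutions and cancels. Gronwall's lemma then gives $X^1=X^2$. With uniqueness in hand, the Yamada--Watanabe argument upgrades the weak identification of Step 1 to $\tilde{\mathcal X}=\mathcal X$ a.s.

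\textbf{Step 3: full family.} Since every subsequential limit is identified with the same $\mathcal X$, the whole family converges, and $(B,M,H,R^\pm)$ match by uniqueness of the semimartingale decomposition \eqref{sdetildeX}.

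\textbf{Main obstacle.} The delicate step is Step 1 for the jump part. Passing the indicator $\mathbf 1_{\theta\le\lambda_s}$ to the limit requires that a.s.\ no atom of $\Pi$ sits exactly on the boundary $\theta=\lambda_{s-}$. This holds because $\Pi$ has a diffuse intensity in $\theta$. We would also need the common-probability-space coupling of $(W,\Pi)$ with the discrete chain, which I would obtain through a Skorokhod representation and an enlargement of the space, as in Theorem \ref{limitthm}.
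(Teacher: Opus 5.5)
Your outline (show the limit solves \eqref{eq:constrained_dyn}, then invoke uniqueness) is the same as the paper's, which simply combines \eqref{sdetildeX} with an asserted uniqueness for \eqref{eq:constrained_dyn}. The gap is in your proof of that uniqueness: it is incompatible with how you identify the controls in Step 1.

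In Step 1 you use the feedback structure of $\mathcal V^F$. So the limit controls are $\tilde\beta_t=\psi^b(t,\tilde X_t)$ and $\tilde\gamma_t=\psi^g(t,\tilde X_t)$, with $\psi^b,\psi^g$ only continuous. But then two solutions $X^1,X^2$ of the closed-loop equation do \emph{not} share a control. Their coefficients are $\sigma(X^i_t,\psi^b(t,X^i_t))$ and $\chi(X^i_{t-},z,\psi^g(t,X^i_{t-}))$, and \textbf{(L)} gives Lipschitz continuity in $x$ only for a frozen control value, with no regularity in $b$ or $g$. The term $\sigma(X^2_t,\psi^b(t,X^1_t))-\sigma(X^2_t,\psi^b(t,X^2_t))$ is not bounded by $C|X^1_t-X^2_t|$, so your Gronwall estimate does not close. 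This is not a technicality: pathwise uniqueness genuinely fails for some one-dimensional SDEs with continuous, even nondegenerate, diffusion coefficients (Barlow's counterexamples). Conversely, suppose you treat $(\beta,\gamma)$ as exogenous processes common to both solutions. That is what Step 2 needs, and what ``for any $(\beta,\gamma,\tau,\xi)\in\mathcal V\times\Xi$'' says. Then Step 1 breaks instead: the chain carries no $W$ or $\Pi$ on which an open-loop control could be evaluated, and nothing shows that the subsequential limit of the chain's controls is the given process.

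The same objection applies to $\xi$ and $\tau$, which are not feedback controls at all. Here $\tilde\xi$ is just the time-changed subsequential limit of the chain's injections. Your claim that ``$\xi$ is common to both solutions and cancels'' therefore presupposes $\tilde\xi=\xi$, which is what the corollary is supposed to deliver. You would need an explicit discrete injection and stopping rule built from the target $(\xi,\tau)$, plus a proof that its limit, jointly with the driving noises, has the law of $(\xi,\tau,W,\Pi)$. Possible repairs:
\begin{itemize}
\item add hypotheses making the closed-loop coefficients Lipschitz (Lipschitz $\psi^b,\psi^g$, and Lipschitz dependence of $\sigma,\chi$ on the control);
\item weaken the conclusion to identification in law through a well-posed martingale problem;
\item follow Kushner's relaxed-control route, where the limit only needs to be \emph{some} admissible controlled process, which is all Theorem \ref{cvvaluefct} uses.
\end{itemize}
Finally, in Step 3, uniqueness of the semimartingale decomposition cannot separate $\tilde B,\tilde\xi,\tilde H,\tilde R^\pm$, since all are of finite variation. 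They are pinned down instead by the explicit formulas of Theorem \ref{limitthm} together with uniqueness of the Skorokhod problem.
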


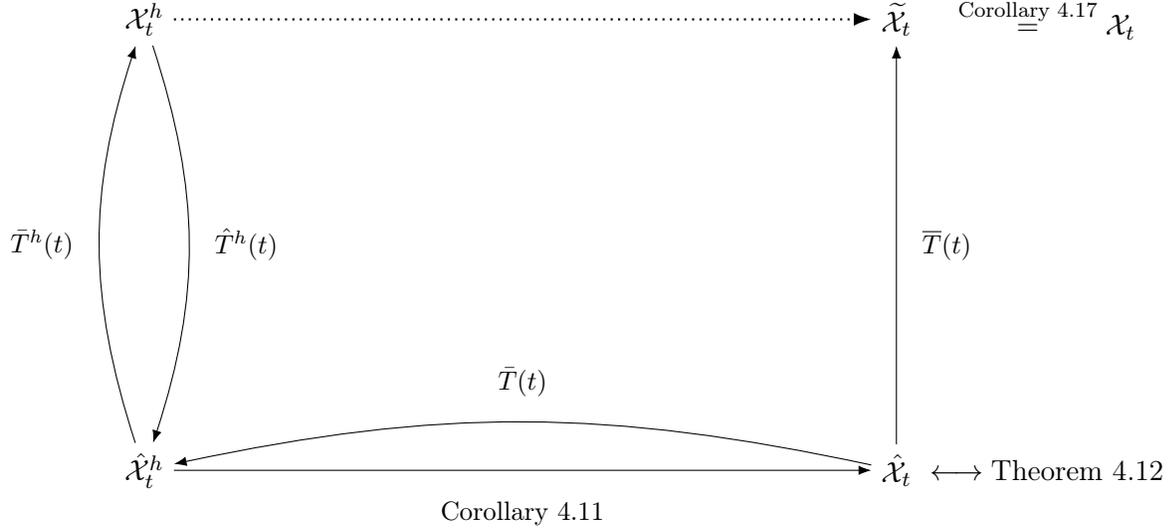
\begin{figure}[h]
\begin{tikzpicture}[
  >=Latex,
  every node/.style={font=\normalsize},
  lab/.style={font=\small},
  scale=1
]

% --------------------
% Nodes (wide layout)
% --------------------
\node (Xcalh)  at (0,6)   {$\mathcal X_t^h$};
\node (Xhath)  at (0,0)   {$\hat{\mathcal X}_t^h$};
\node (cal X)  at (12,6)   {$\overset{\text{Corollary 4.17}}{=}\mathcal X_t$};
\node (cal X)  at (12,0)   {$\longleftrightarrow \text{Theorem 4.12}$};
\node (Xtildeh) at (10,0) {$\hat{\mathcal X}_t$};
\node (Xtilde)  at (10,6) {$ \widetilde{\mathcal X}_t$};

\draw[->]
  (Xtildeh) -- node[lab, right=6pt] {$\overline T(t)$} (Xtilde);
% --------------------
% Left vertical arrows (gentle curves)
% --------------------
\draw[->, bend left=18]
  (Xhath) to
  node[lab, left=6pt] {$\bar T^h(t)$}
  (Xcalh);

\draw[->, bend left=18]
  (Xcalh) to
  node[lab, right=6pt] {$\hat T^h(t)$}
  (Xhath);

% --------------------
% Bottom horizontal arrow
% --------------------
\draw[->]
  (Xhath) --
  node[lab, below=8pt] {Corollary 4.11}
  (Xtildeh);

% --------------------
% Bottom return arrow
% --------------------
\draw[->, bend right=12]
  (Xtildeh) to
  node[lab, above=6pt] {$\bar T(t)$}
  (Xhath);

% --------------------
% Labels under nodes
% --------------------

% --------------------
% Top dotted double arrow
% --------------------
\draw[->, dotted, thick]
  (Xcalh) -- (Xtilde);

\end{tikzpicture}
\caption{Convergence of $\mathcal X^h$ where $\mathcal X$ represents any generic process $X,H,B,M,H,R^+,R^-$}\label{pic:convergenceX}
\end{figure}

\subsection{Convergence of the value functions}
\label{sec:convvalue}
We now turn to the convergence of the discrete time optimization problem \eqref{eq:Vh} to its continuous time version \eqref{controlpb-continuous} with respect to the time discretization parameter $h$.

\begin{theorem}[Convergence of the value function]\label{cvvaluefct}
Let $V_{0,L}(x, \bar{\varsigma})$ be the value function of the continuous bounded control problem defined in \eqref{eq:Vl}, and let $V_{0,L}^h(x^h, \bar{\varsigma}^h)$ be the value function of the discrete approximation defined in \eqref{eq:Vh} where $(x_h, \bar{\varsigma}_h)$ is a sequence of initial conditions in the discrete domain converging to $(x, \bar{\varsigma})$ as $h \to 0$. Then, there exists a subsequence $\{h_n\}_{n \ge 1}$ with $h_n \to 0$ as $n \to \infty$, such that the discrete value function converges to the continuous value function:
\begin{equation*}
    \lim_{n\to \infty} V_{0,L}^{h_n}(x_{h_n},\bar{\varsigma}_{h_n}) = V_{0,L}(x,\bar{\varsigma}).
\end{equation*}
\end{theorem}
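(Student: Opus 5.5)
The plan follows the Kushner–Dupuis method: prove the two inequalities $\limsup_n V^{h_n}_{0,L}(x_{h_n},\bar\varsigma_{h_n}) \le V_{0,L}(x,\bar\varsigma)$ and $\liminf_n V^{h_n}_{0,L}(x_{h_n},\bar\varsigma_{h_n}) \ge V_{0,L}(x,\bar\varsigma)$ separately, along a subsequence supplied by Corollary \ref{subseqcv}.

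\emph{Upper bound.} For each $h$, take an $h$-optimal (or $h$-almost optimal) admissible policy $(\nu^h,\vartheta^h,\pi^h)$ for \eqref{eq:Vh}. Theorem \ref{thm:DPP_discrete} gives a Markov policy, which I will assume can be chosen in feedback form $\mathcal V^F$. Then:
\begin{enumerate}
\item Choose $h_n$ with $V^{h_n}_{0,L}(x_{h_n},\bar\varsigma_{h_n}) \to \limsup_h V^h_{0,L}$.
\item Extract a further subsequence along which $\hat{\mathcal H}^{h_n}\to\hat{\mathcal H}$ (Corollary \ref{subseqcv}, via Skorokhod representation).
\item Rewrite $J^h_0$ in the rescaled clock as
\[
\mathbb E\Big[\int_0^{\hat\tau^h\wedge\mathcal S^h} e^{-r\bar T^h(s)}\mathcal K\,d\hat T^h(s) + \int_0^{\hat\tau^h\wedge\mathcal S^h} e^{-r\bar T^h(s)}\phi\,d\hat\xi^h_s\Big]
\]
plus the terminal and stopping terms, up to $o(1)$ errors.
\item Pass to the limit in each term. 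The integrands are continuous and bounded on the compact domain $[-L,L]\times[0,L]^n$, and $\hat T^h,\hat\xi^h$ converge weakly as measures. Uniform integrability holds because $\sup_h\mathbb E[\mathcal S^h]<\infty$ (Lemma \ref{lm:mathfrakS}) and $\mathcal K,F,G$ are bounded on the bounded domain.
\item Undo the time change with $\bar T$, using Lemma \ref{lm:measure_theory} and Lemma \ref{cor:compressed_convergence}. Theorem \ref{limitthm} and Corollary \ref{cor:cv} identify the limit as a cost $J_{0,L}(x,\bar\varsigma;\tilde\nu,\tilde\vartheta)$ of an admissible control for the reflected system \eqref{eq:constrained_dyn}, with $\tilde\xi_t=\hat\xi_{\bar T(t)}$ and $\tilde\tau=\bar T(\hat\tau)$. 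This limit is at most $V_{0,L}(x,\bar\varsigma)$.
\end{enumerate}

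\emph{Lower bound.} Fix $\varepsilon>0$ and pick an $\varepsilon$-optimal control for $V_{0,L}$. By a density argument, replace it with a control of a nice form at a cost of at most $\varepsilon$:
\begin{enumerate}
\item feedback $\beta,\gamma$ from continuous $\psi^b,\psi^g$ taking finitely many values;
\item $\xi$ piecewise constant with finitely many jumps of sizes on a grid;
\item $\tau$ a hitting or grid-valued stopping time.
\end{enumerate}
Using continuity of $J_{0,L}$ in the control (same dominated-convergence argument as in Proposition \ref{prop:VL_to_V}), this approximation changes the cost by at most $\varepsilon$. Then:
\begin{enumerate}
\item Build the discrete analogue of this control: an injection of size $\xi$ is realized as a string of $\pi=1$ steps of size $h$, which in the rescaled clock $\hat T^h$ takes macroscopic time and in the original clock is instantaneous. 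The feedback control is applied at grid states, and stopping happens at the first grid time after $\tau$.
\item Apply the same tightness and identification chain (Theorem \ref{thm:limit_point_prop}, Theorem \ref{limitthm}, Corollary \ref{cor:cv}). Combined with uniqueness of \eqref{eq:constrained_dyn}, this shows the discrete costs converge to $J_{0,L}$ of the chosen control.
\item Hence $\liminf_n V^{h_n}_{0,L}\ge V_{0,L}-2\varepsilon$.
\end{enumerate}

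The main obstacle is continuity of the cost functional at the limit, in two places:
\begin{enumerate}
\item \emph{Stopping time.} The indicator $\mathbf 1_{\tau<T}$ and $F(X_\tau)$ need not be continuous under Skorokhod convergence. I would first exploit the fact that for each fixed $h$ the DPP lets us optimize over stopping, so that $\hat\tau^h$ can be treated as an extra tight component. I would then argue that the limit $\hat\tau$ is a stopping time for the limit filtration, enlarging the filtration as in Theorem \ref{limitthm} if needed. Discontinuities of $X$ at $\tau$ are only Hawkes jump times, which have probability zero of coinciding with the limit stopping time when the latter is chosen as above.
\item \emph{Singular cost.} The convergence of $\int e^{-r\bar T^h}\phi\,d\hat\xi^h$ requires $\bar T^h\to\bar T$ where $\hat\xi$ increases. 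This is delicate precisely on flat stretches of $\hat T$, where $\bar T$ jumps and Lemma \ref{lem:inverse_convergence}(2) does not apply. I would handle it by working entirely in the stretched clock, where everything is continuous, and only mapping back at the end through Lemma \ref{lm:measure_theory}.
\end{enumerate}
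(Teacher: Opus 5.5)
Your two-step scheme is the paper's own: an upper bound from almost-sure limits of $h$-optimal discrete policies along Corollary \ref{subseqcv}, and a lower bound from discretizing an $\varepsilon$-optimal continuous control. Your stretched-clock bookkeeping is more careful than the paper's, but the paper's proof says nothing about the points below, and the justifications you supply for them fail as written.

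\textbf{Controls.} Writing each $h$-optimal DPP policy as a feedback does not give tightness. Remark \ref{rm:tight} relies on one fixed continuous pair $(\psi^b,\psi^g)$. The DPP maximizers, by contrast, are arbitrary functions on the grid, they also depend on $\bar\varsigma$ (which $\mathcal V^F$ does not allow), and their continuous extensions need not be equicontinuous in $h$. For example, a bang--bang switch across a curve, which is natural when $\sup_b\frac12\sigma^2(x,b)\partial_{xx}v$ is attained at the extremes of $B$, makes $\hat\beta^h$ oscillate ever faster. It then has no limit in $D$, and $\sigma(\hat X^h,\hat\beta^h)^2$ need not converge in the quadratic variation. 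The missing tool is relaxed controls, which are compact as measures on $[0,\mathcal S]\times B\times\Gamma$. You also need a chattering/approximation argument showing that the relaxed, weak-formulation value is still $V_{0,L}$.

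In the lower bound, continuous feedback maps taking finitely many values are constant on the connected set $[0,T]\times[-L,L]$, so your class is not dense. Moreover, a grid-valued $\tau$ or a piecewise-constant $\xi$ adapted to $(W,\Pi)$ has no meaning on the chain's probability space, so ``the first grid time after $\tau$'' is undefined there. The standard remedy (Kushner--Dupuis) is to approximate by finite-valued, piecewise-constant controls and stopping/injection rules that are continuous functions of finitely many samples of the driving noise. You then apply the same functionals to the noise reconstructed from the chain. Weak convergence transports the joint law of noise and control, and pathwise uniqueness for \eqref{eq:constrained_dyn} identifies the limit cost.

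\textbf{Stopping.} In the upper bound you do not choose $\hat\tau$, and it does coincide with Hawkes jump times with positive probability. With a put-type $F$ and downward jumps, as in Section \ref{secnumeric}, the discrete controller typically stops right after a jump that throws $X$ into the stopping region. The same happens for the hitting times you allow in the lower bound; only stopping times valued in a deterministic finite grid avoid jumps a.s. The argument that works runs the other way. Add to the tight family the gap $\rho^h$ between $\tau^h$ and the next Hawkes jump. Since $\lambda^h\le\Lambda_L$ on the truncated domain, $\mathbb P(\rho^h<\delta)\le\Lambda_L\delta+o(1)$, and the Portmanteau theorem gives $\rho>0$ a.s. Hence $X^h_{\tau^h}$ converges to the post-jump value $X_{\hat\tau}$, which is exactly what a continuous-time stopping time collects.

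Separately, on $\{\tau^h<T,\ \hat\tau=T\}$ the discrete payoff tends to $F(X_T)$ while $J_{0,L}$ pays $G(X_T)$. You must show this limit is asymptotically attainable in continuous time, e.g.\ by stopping at $T-\epsilon$ on $\mathcal F_{T-\epsilon}$-measurable approximations of that event.

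Two minor fixes:
\begin{enumerate}
\item By Definition \ref{def:rescale_hatT}, $\hat T^h$ maps stretched time to real time, so the discount is $e^{-r\hat T^h(s)}$, not $e^{-r\bar T^h(s)}$. With that integrand the singular cost converges pathwise at once, since $\hat T^h$ and $\hat\xi^h$ converge uniformly. The singular term is mapped back by pushing $d\hat\xi$ forward under $\hat T$; Lemma \ref{lm:measure_theory} only covers $d\hat T$.
\item $\sup_h\mathbb E[\mathcal S^h]<\infty$ does not give uniform integrability of $\hat\xi^h(\mathcal S^h)$. You do not need it: since $\phi\ge0$, Fatou's lemma gives the required inequality for the upper bound.
\end{enumerate}
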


\begin{proof}
We aim to prove that, up to a subsequence, the value function of the discrete bounded problem converges to the value function of the continuous bounded problem. Let $\{h_n\}$ be the subsequence for which the weak convergence of the processes has been established in Corollary \ref{subseqcv}. For simplicity, we denote the subsequence as $h$. We first recall from Corollary \ref{subseqcv} that the tuple of state and control processes converges weakly to a continuous limit process:
    \begin{equation*}
        (\bar{\varsigma}^h, X^h, u^h) \Rightarrow (\bar{\varsigma}^L, X^L, u),
    \end{equation*}
    where the limit processes satisfy the continuous bounded dynamics. Since $F, G, \phi,$ and $ \mathcal{K}$ satisfy \textbf{(Polx),(Poly)}, and \textbf{(Coef)}, by the dominated convergence theorem, we have the convergence of the expected payoffs:
    \begin{equation*}
        \lim_{h \to 0} J_{0,L}^h(x_h, \bar{\varsigma}_h; u^h) = J_{0,L}(x, \bar{\varsigma}; u).
    \end{equation*}The proof proceeds in two steps by establishing the upper and lower bounds.

\paragraph{Step 1:  $\limsup_{h \to 0} V_{0,L}^h \leq V_{0,L}$.}
For each $h$, let $u^h = (\nu^h, \tau^h, \xi^h)$ be an $\varepsilon$-optimal control sequence for the discrete problem starting at $(x_h, \bar{\varsigma}_h)$ defined by
    \begin{equation*}
        V_{0,L}^h(x_h, \bar{\varsigma}_h) \leq J_{0,L}^h(x_h, \bar{\varsigma}_h; u^h) + h.
    \end{equation*}

    The limit control $u$ belongs to the set of admissible controls for the continuous bounded problem. Therefore, the payoff achieved by this limit control cannot exceed the supremum (the value function):
    \begin{equation*}
        J_{0,L}(x, \bar{\varsigma}; u) \leq \sup_{\nu,\tau,\xi} J_{0,L}(x, \bar{\varsigma}; \nu,\tau,\xi) = V_{0,L}(x, \bar{\varsigma}).
    \end{equation*}

    Taking the limit supremum as $h \to 0$:
    \begin{equation*}
        \limsup_{h \to 0} V_{0,L}^h(x_h, \bar{\varsigma}_h) \leq \lim_{h \to 0} (J_{0,L}^h(u^h) + h) = J_{0,L}(u) \leq V_{0,L}(x, \bar{\varsigma}).
    \end{equation*}

\paragraph{Step 2: $\liminf_{h \to 0} V_{0,L}^h \geq V_{0,L}$.}
    Fix $\varepsilon > 0$. Let $u^* = (\nu^*, \tau^*, \xi^*)$ be an $\varepsilon$-optimal admissible control for the continuous bounded problem such that:
    \begin{equation*}
        J_{0,L}(x, \bar{\varsigma}; u^*) \geq V_{0,L}(x, \bar{\varsigma}) - \varepsilon.
    \end{equation*}

    We construct a sequence of discrete admissible controls $\hat{u}^h = (\hat{\nu}^h, \hat{\tau}^h, \hat{\xi}^h)$ approximating $u^*$ by setting $\hat{\nu}^h_i = \nu^*_{t_i}$, defining the discrete impulse $\hat{\xi}^h$ piecewisely such that its cumulative sum approximates $\xi^*$ and setting $\hat{\tau}^h = \min \{ t_k^h : t_k^h \geq \tau^* \}$.
Note that the state process $\hat{X}^h$ driven by $\hat{u}^h$ converges strongly (in $L^2$) to the optimal continuous trajectory $X^*$. Consequently, 
    \begin{equation*}
        \lim_{h \to 0} J_{0,L}^h(x_h, \bar{\varsigma}_h; \hat{u}^h) = J_{0,L}(x, \bar{\varsigma}; u^*).
    \end{equation*}
 Since now $\hat{u}^h$ is an admissible control for the discrete problem such that
    \begin{equation*}
        V_{0,L}^h(x_h, \bar{\varsigma}_h) \geq J_{0,L}^h(x_h, \bar{\varsigma}_h; \hat{u}^h),
    \end{equation*}

as $h \to 0$ we get
    \begin{equation*}
        \liminf_{h \to 0} V_{0,L}^h(x_h, \bar{\varsigma}_h) \geq \lim_{h \to 0} J_{0,L}^h(x_h, \bar{\varsigma}_h; \hat{u}^h) = J_{0,L}(x,\bar{\varsigma},u^*) \geq V_{0,L}(x, \bar{\varsigma}) - \varepsilon.
    \end{equation*}
    Since $\varepsilon$ is arbitrary, we have $\liminf_{h \to 0} V_{0,L}^h \geq V_{0,L}$.

\vspace{1em}
Combining the results from Step 1 and Step 2, we obtain:
\begin{equation*}
    V_{0,L}(x, \bar{\varsigma}) \leq \liminf_{h \to 0} V_{0,L}^h(x_h, \bar{\varsigma}_h) \leq \limsup_{h \to 0} V_{0,L}^h(x_h, \bar{\varsigma}_h) \leq V_{0,L}(x, \bar{\varsigma}).
\end{equation*}\end{proof}

By combining Theorem \ref{cvvaluefct} with Proposition \ref{prop:VL_to_V} we get the final convergence result. 
\begin{corollary}\label{cor:cvV}
    Let $(x_h, \bar{\varsigma}_h)$ be a sequence of initial conditions in the discrete domain converging to $(x, \bar{\varsigma})$ as $h \to 0$. Then, there exists a subsequence $\{h_n\}_{n \ge 1}$ with $h_n \to 0$ as $n \to \infty$, such that the discrete value function converges to the continuous value function:
\begin{equation*}
  \lim_{L\to\infty}  \lim_{n\to \infty} V_{0,L}^{h_n}(x_{h_n},\bar{\varsigma}_{h_n}) = V_{0}(x,\bar{\varsigma}).
\end{equation*}
\end{corollary}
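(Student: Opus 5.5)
The argument will be a two-stage limit: first send $h\to 0$ at fixed $L$ using Theorem \ref{cvvaluefct}, then send $L\to\infty$ using Proposition \ref{prop:VL_to_V}. The order of the limits in the statement matches this, so no uniformity in $L$ of the $h$-limit is needed. The only subtlety is that the subsequence produced by Theorem \ref{cvvaluefct} comes from the tightness and Prokhorov extraction of Corollary \ref{subseqcv}, and so a priori it depends on $L$.

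\textbf{Step 1 (fixed $L$).} Fix $L\in h\mathbb N$ large enough that $(x,\bar\varsigma)$ lies in the interior of $[-L,L]\times[0,L]^n$. Since $(x_h,\bar\varsigma_h)\to(x,\bar\varsigma)$, the discrete initial conditions also lie in the bounded domain for $h$ small. Theorem \ref{cvvaluefct} then gives a subsequence $h_n^{(L)}\to 0$ with $V^{h_n^{(L)}}_{0,L}(x_{h_n^{(L)}},\bar\varsigma_{h_n^{(L)}})\to V_{0,L}(x,\bar\varsigma)$.

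\textbf{Step 2 (one subsequence for all $L$).} To obtain a single sequence $(h_n)$ that works for every $L$, restrict $L$ to a countable family $L_k\uparrow\infty$ and use a Cantor diagonal extraction. Take a subsequence for $L_1$ from Theorem \ref{cvvaluefct}, then refine it for $L_2$, and so on. The refinement is legitimate because the tightness of $\hat{\mathcal H}^h$ holds along any sequence $h\to 0$, so Corollary \ref{subseqcv} and hence Theorem \ref{cvvaluefct} can be re-applied to any given subsequence. The diagonal sequence $h_n:=h^{(n)}_n$ then satisfies $\lim_n V^{h_n}_{0,L_k}(x_{h_n},\bar\varsigma_{h_n})=V_{0,L_k}(x,\bar\varsigma)$ for every $k$.

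A cleaner alternative removes the dependence on subsequences altogether. Steps 1 and 2 of the proof of Theorem \ref{cvvaluefct} show that every subsequential limit equals $V_{0,L}(x,\bar\varsigma)$, and the sequence $V^h_{0,L}$ is bounded by the polynomial growth assumptions \textbf{(Polx)}, \textbf{(Poly)} on the compact domain. A bounded sequence whose every subsequential limit is the same number converges, so the full family converges as $h\to0$. I would state this observation and use it in place of the diagonal argument if it goes through.

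\textbf{Step 3 ($L\to\infty$).} Write $\ell(L):=\lim_n V^{h_n}_{0,L}(x_{h_n},\bar\varsigma_{h_n})=V_{0,L}(x,\bar\varsigma)$. Proposition \ref{prop:VL_to_V} gives $V_{0,L}(x,\bar\varsigma)\to V_0(x,\bar\varsigma)$, which yields the claim along $L_k$. For general $L\to\infty$, either use the full-sequence convergence from the alternative in Step 2, or note that Proposition \ref{prop:VL_to_V} holds for arbitrary $L\to\infty$ while $\ell(L)=V_{0,L}$ exactly.

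The main obstacle is Step 2, namely reconciling the $L$-dependent subsequences. I expect the uniqueness-of-limit argument to resolve it, and I would fall back on the diagonal extraction over $L\in h\mathbb N$ (countable) otherwise.
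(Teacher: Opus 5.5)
Your proposal is correct and follows the paper's route. The paper proves the corollary in one line, by combining Theorem \ref{cvvaluefct} (the limit $h\to0$ at fixed $L$) with Proposition \ref{prop:VL_to_V} (the limit $L\to\infty$), exactly as in your Steps 1 and 3. Your handling of the $L$-dependence of the extracted subsequence addresses a point the paper leaves implicit, and of your two fixes the uniqueness-of-subsequential-limits argument (full convergence as $h\to0$ for each fixed $L$) is preferable. The diagonal extraction only controls a countable family $L_k$, and ``$L\in h\mathbb{N}$'' should really read ``$L$ a multiple of every $h_n$''.
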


\section{Application to energy project management with cyber risk: a controlled Ornstein–Ulhenbeck driven by Hawkes process}\label{secnumeric}

We turn to the application of our result motivated by cyber risk for power plant investment project. In this section, we model the project’s profitability state $X_t$ as a controlled mean-reverting factor. Mean reversion reflects the tendency of electricity-market fundamentals (and in particular de-seasonalized prices/margins) to revert due to physical balancing and supply response, for which Ornstein–Uhlenbeck specifications are standard see for example \cite{benth2008stochastic, lucia2002electricity}. We consider capital injection in this project modeled through a singular control $\xi$. To capture extreme events such as cascades outages, scarcity episodes or cyberattack on power grids (e.g. \cite{abraham2025cyber, case2016analysis}), we include accident-driven downward jumps as in mean-reverting jump-diffusion models for power prices, see for example \cite{cartea2005pricing}. modeled with a Hawkes process. In order to emphasize the impact of the singular control and the stopping time, we will fix $\beta=1$ and $\gamma=0$. The dynamics of $X$ considered is thus given by 
\begin{equation*}
    \begin{cases}
    &dX_t= (\alpha - \delta X_t)dt + \sigma dW_t + d\xi_t - dN_t \\
    &d\lambda_t=  -q(\lambda_t - a)dt + dN_t\\
     &N(dt,dz)=\int_0^\infty \mathbf 1_{\theta\leq \lambda_t } \Pi(dt,d\theta,dz),
\end{cases}
    \end{equation*}
where $\alpha,a,q,\delta,\sigma$ are positive constant. We note that in the equation for the intensity, $q$ is the decay rate and $a$ is the baseline intensity. Note that because we do no longer control the volatiltiy and the severity of attacks to focus on the singular-stopping problem, the cost function  $\mathcal{K}(s,x,\beta,\gamma,\lambda,\text{m})$ is zero.\newline

We model the stopping decision as sell-back flexibility. This acts as an insured floor, giving the manager the right to liquidate the project for a fixed insured value $I$. Mathematically, this is equivalent to holding an American put option on the project value X with strike I. If the project is never stopped, the manager receives the terminal value $G(X_T)$. Thus, the strategy acts as a hedge: as soon as the project value drops significantly below I, the manager exercises the option to minimize losses. We thus assume that at any time $\tau$, the manager may exit or sell the project and receive $I>0$ (insured floor, or liquidation value), seen as an American put option with strike $I$ on $X$. We set $F(X_\tau)=(I-X_\tau)^+$. If the project is not stopped and remains viable, the manager receives $G(X_T)=\frac{X_T^{1-\eta}}{\eta}$ at time $T$.\vspace{1em}

In the following, we set the  following values for the parameters in all simulations:
\begin{table}[h]
    \centering
    \begin{tabular}{cccccccccc}
    $r=0$ &
        $\alpha = 0.5$ & $\delta = 1$ &
        $\sigma = 0.1$ & $\eta = 0.1$  & $k=1$ & $a = 1$ & $T=1$
    \end{tabular}
\end{table}

\subsection{Convergence of the Hawkes-OU with a deterministic capital injection.}
First, we visually study the convergence of the probability model defined for the discrete-time Markov chain in Section \ref{sec:MCdisc}. Since we are just looking at the evolution of $X_t$, rather than the value function itself, we fix a constant jump size for $\xi=0.5$, which occurs at time $\tfrac{T}{2}$. 

\begin{figure}[H]
\centering 
    \centering
    \includegraphics[width=0.75\linewidth]{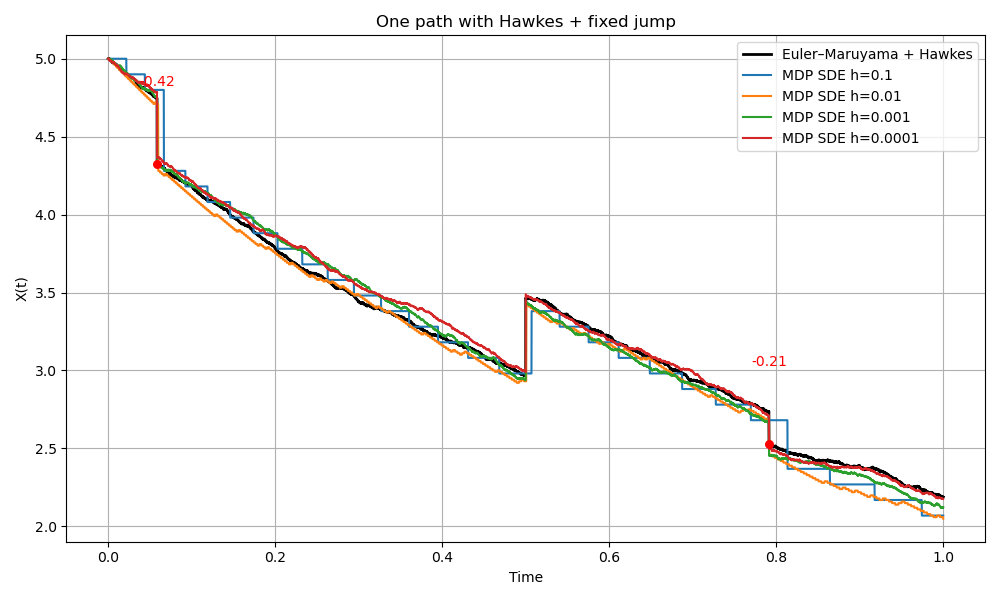}
\caption{Exact OU diffusion vs. DTMC approximations.}    \label{fig:SDE_DTMC_Hawkes}
\end{figure}

In Figure \ref{fig:SDE_DTMC_Hawkes}, we illustrate Corollary \ref{cor:cv} and see that when $h$ decreases towards 0, the discrete-time Markov chain approximation approaches the exact simulation of the Ornstein–Uhlenbeck process.

\subsection{Convergence of the discrete time optimization toward the continuous time}
We now illustrate the result of Theorem \ref{cor:cvV} and the convergence of the value function associated with the DTMC toward the value function in the continuous time setting for different values of $h$. Figure \ref{fig:conv_V} shows the convergence of $V^h$ with respect to both $x_0$ and $\lambda_0$ for different value of $h$ while Figure \ref{fig:tau_L_nosing} presents cross sections with respect to a fix value of $x_0$ or $\lambda_0$. We see that the value converges when $h$ goes to zero validating Corollary \ref{cor:cvV}. Notice that for $x_0$ big enough and close to 1, the value function seems to go to a ceiling value of $\frac1\eta$. It is explained by the fact that in this case, with this choice of parameters, the project is viable and never stops and the dominant term is $\frac{X_T^{1-\eta}}{\eta}\approx \frac1\eta$ since $X$ is capped at one in our simulation. We notice that the value of the project increases with respect to $x_0$ showing that the optimal injection of capital leads to a viable project. At the opposite, when the value of $\lambda_0$ increases, the project is more subjected to accidents leading to a decrease of the value function. 
\begin{figure}[H]
    \centering
    \includegraphics[width=0.75\linewidth]{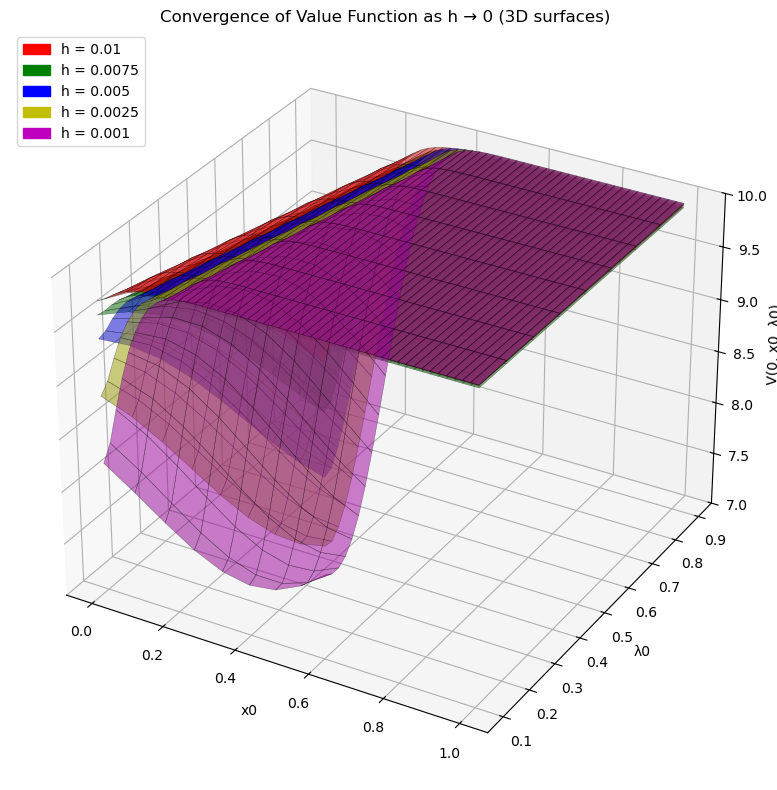}
    \caption{Value function convergence in $x_0$ and $\lambda_0$}
    \label{fig:conv_V}
\end{figure}

\begin{figure}[H]
\centering
\begin{subfigure}{0.48\textwidth}
    \centering
    \includegraphics[width=\linewidth]{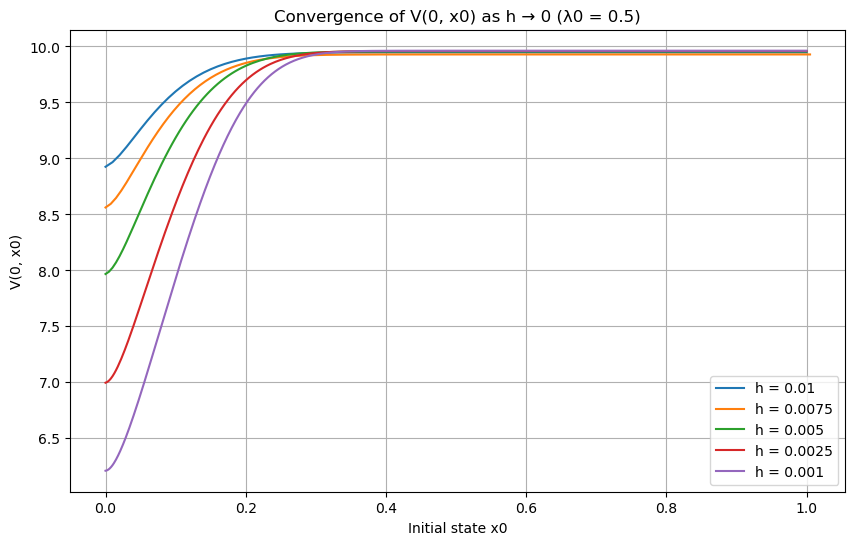}
    \caption{$V(0, x_0, \lambda_0=0.5)$}
    \label{fig:conv_h}
\end{subfigure}
\hfill
\begin{subfigure}{0.48\textwidth}
    \centering
    \includegraphics[width=\linewidth]{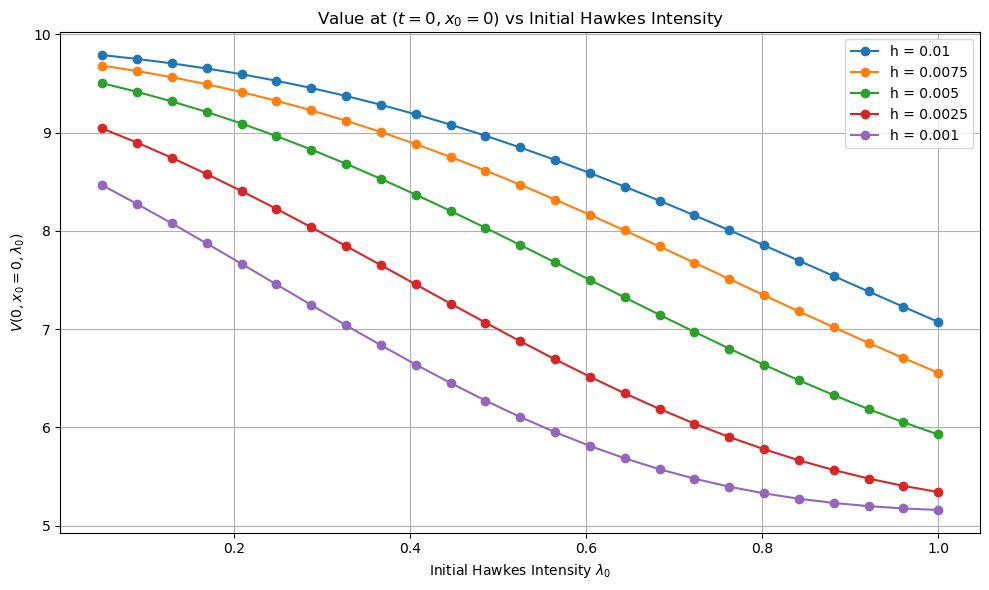}
    \caption{$V(0, x_0=0, \lambda_0)$}
    \label{fig:conv_L}
\end{subfigure}
\caption{Convergence of the value function with fixed initial parameters for different $h$.}
\end{figure}

\subsection{Optimal capital injection}
In Figure \ref{fig:singular} we display a sample path of $X$ and the value function $V^{h=0.005}_t(x_0=0,\lambda_0=1)$ alongside, with optimal capital injections (singular control) visualized on the x-axis. We observe that the manager injects capital heavily at the onset of the project to ensure viability and prevent early stopping, with activity increasing again as the project approaches maturity. This corresponds to a kind of rescue time at the beginning, then a wait and see period and finally a cost of potential ruin hedging at the end.

\begin{figure}[H]
    \centering
    \includegraphics[width=0.75\linewidth]{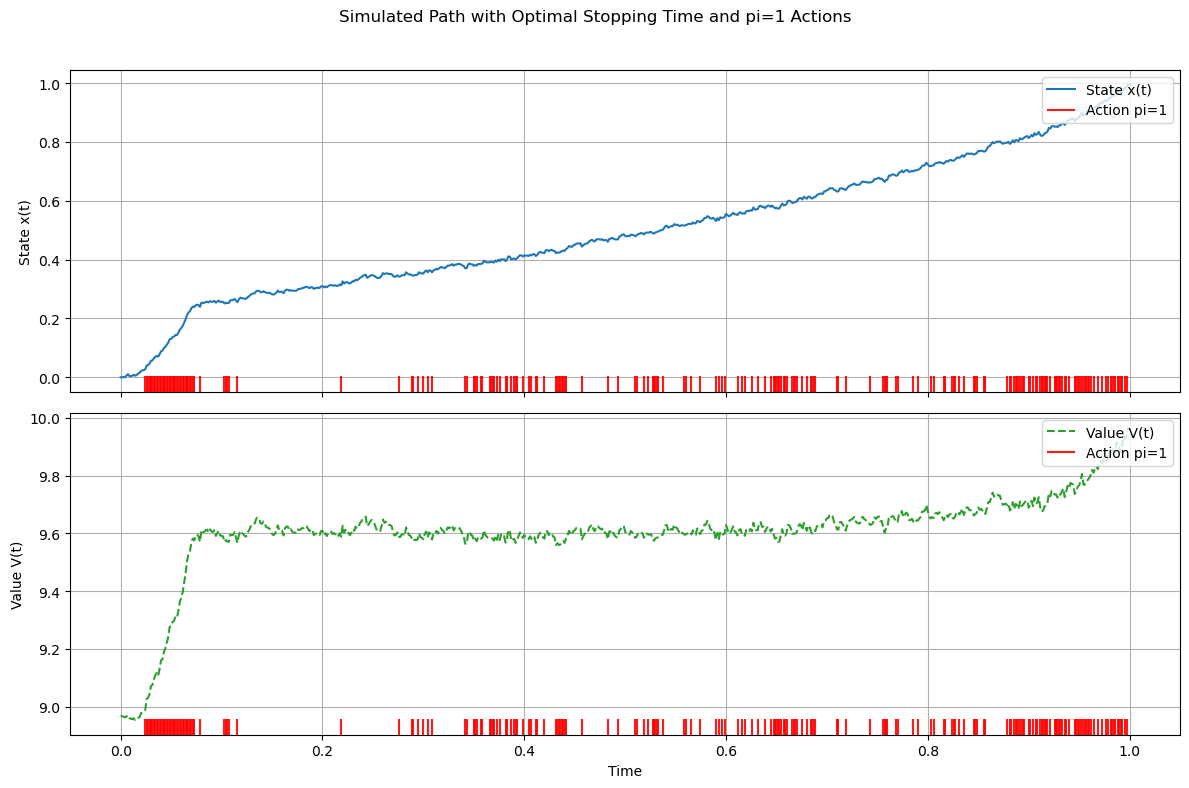}
    \caption{Top a path of $X, $bottom $V^{h=0.005}(x_0=0,\lambda_0=1)$ over time; red lines denote times of capital injection.}
    \label{fig:singular}
\end{figure}

\subsection{Singular control and optimal stopping}
Figure \ref{graphtaustar} shows how the optimal stopping time for the MDP when $h=0.005$ changes as the initial Hawkes intensity increases for a fixed $x_0$. Specifically, Figure \ref{fig:tau_L_nosing} displays how the MDP stops earlier as the initial Hawkes intensity increases when there is no singular control applied during the process. The intuition behind this result is that a higher $\lambda_0$ means there will be more jumps in the Hawkes process, which corresponds to more decreases in the value function. Consequently, without the ability to offset these decreases via injection using singular control, the value function tends to hit the stopping condition much earlier. In Figures \ref{fig:tau_L_midsing} and \ref{fig:tau_L_sing}, we take high and low costs for the singular control (relative to the problem), respectively. In both cases, compared to the uncontrolled problem, the project remains viable until maturity in the majority of scenarios, even for high $\lambda_0$ thereby avoiding early exit. This underscores the benefit of strategic capital injections amplified when the cost of intervention is reduced.
\begin{figure}[H]
    \centering
    % --- First Row (2 Images) ---
    \begin{subfigure}[b]{0.48\textwidth}
        \centering
        \includegraphics[width=\linewidth]{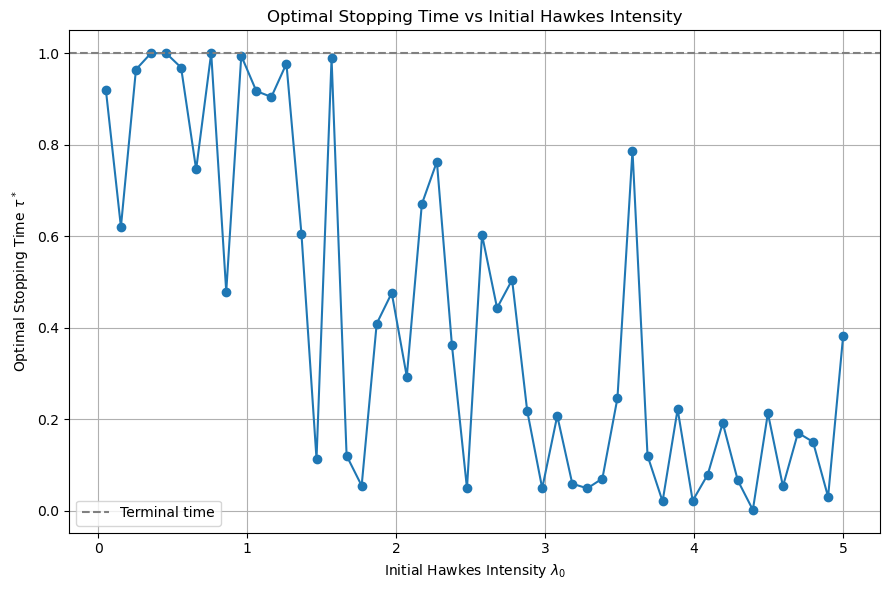}
        \caption{No singular control}
        \label{fig:tau_L_nosing}
    \end{subfigure}

    % --- Vertical Spacing ---
    \par\bigskip 
    
    % --- Second Row (1 Centered Image) ---

     \begin{subfigure}[b]{0.48\textwidth}
        \centering
        \includegraphics[width=\linewidth]{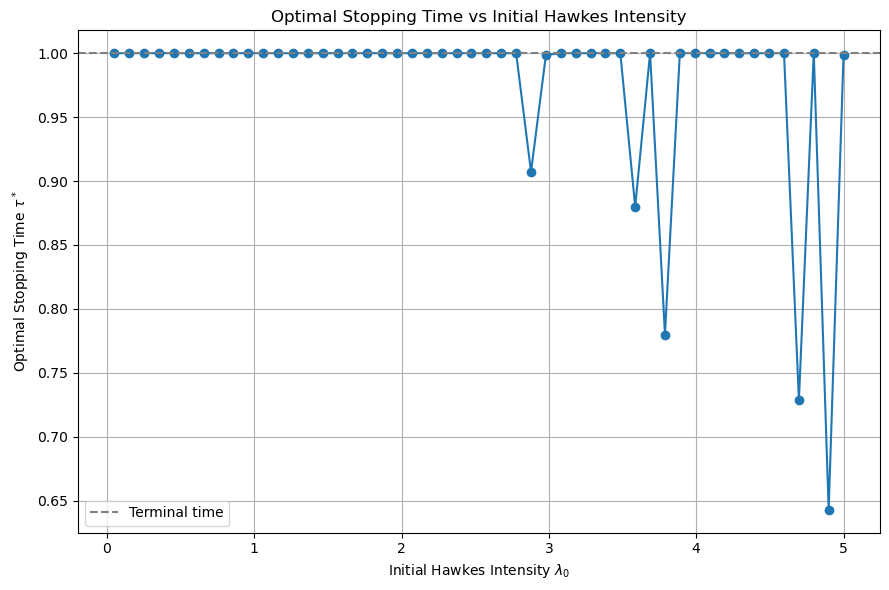}
        \caption{$\phi = 3$}
        \label{fig:tau_L_midsing}
    \end{subfigure}
    \hfill
    \begin{subfigure}[b]{0.48\textwidth}
        \centering
        \includegraphics[width=\linewidth]{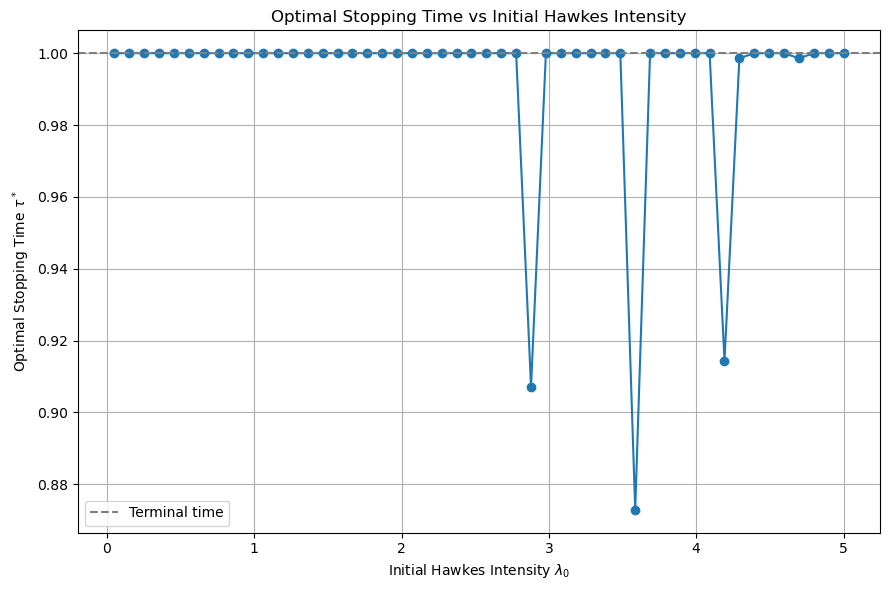}
        \caption{$\phi = 0.01$}
        \label{fig:tau_L_sing}
    \end{subfigure}
    
    \caption{Optimal stopping time $\tau^*$ vs. initial Hawkes intensity $\lambda_0$ with and without singular control.}\label{graphtaustar}
\end{figure}

\bibliographystyle{siam}
\bibliography{P1}

@article{aid2015explicit,
  title={Explicit investment rules with time-to-build and uncertainty},
  author={A{\"\i}d, Ren{\'e} and Federico, Salvatore and Pham, Huy{\^e}n and Villeneuve, Bertrand},
  journal={Journal of Economic Dynamics and Control},
  volume={51},
  pages={240--256},
  year={2015},
  publisher={Elsevier}
}

@article{alfonsi2013capacitary,
  title={Capacitary measures for completely monotone kernels via singular control},
  author={Alfonsi, Aur{\'e}lien and Schied, Alexander},
  journal={SIAM Journal on Control and Optimization},
  volume={51},
  number={2},
  pages={1758--1780},
  year={2013},
  publisher={SIAM}
}

@article{alvarez2001singular,
  title={Singular stochastic control, linear diffusions, and optimal stopping: A class of solvable problems},
  author={Alvarez, Luis HR},
  journal={SIAM Journal on Control and Optimization},
  volume={39},
  number={6},
  pages={1697--1710},
  year={2001},
  publisher={SIAM}
}

@article{alvarez2009singular,
  title={On singular stochastic control and optimal stopping of spectrally negative jump diffusions},
  author={Alvarez, Luis HR and Rakkolainen, Teppo A},
  journal={Stochastics: An International Journal of Probability and Stochastics Processes},
  volume={81},
  number={1},
  pages={55--78},
  year={2009},
  publisher={Taylor \& Francis}
}

@article{an2010combined,
  title={Combined optimal stopping and singular stochastic control},
  author={An, Ta Thi Kieu},
  journal={Stochastic Analysis and Applications},
  volume={28},
  number={3},
  pages={401--414},
  year={2010},
  publisher={Taylor \& Francis}
}

@article{bovo2025variational,
  title={Variational Inequalities on Unbounded Domains for Zero-Sum Singular Controller vs. Stopper Games},
  author={Bovo, Andrea and De Angelis, Tiziano and Issoglio, Elena},
  journal={Mathematics of Operations Research},
  volume={50},
  number={1},
  pages={277--312},
  year={2025},
  publisher={INFORMS}
}

@article{aubert2025optimal,
  title={Optimal dividend and capital injection under self-exciting claims},
  author={Aubert, Paulin and Chevalier, Etienne and Vath, Vathana Ly},
  journal={arXiv preprint arXiv:2511.19701},
  year={2025}
}

@article{case2016analysis,
  title={Analysis of the cyber attack on the Ukrainian power grid},
  author={Case, Defense Use},
  journal={Electricity information sharing and analysis center (E-ISAC)},
  volume={388},
  number={1-29},
  pages={3},
  year={2016},
  publisher={Washington, DC}
}

@article{abraham2025cyber,
  title={Cyber-Attacks on Energy Infrastructure—A Literature Overview and Perspectives on the Current Situation},
  author={Abraham, Doney and Houmb, Siv Hilde and Erdodi, Laszlo},
  journal={Applied Sciences},
  volume={15},
  number={17},
  pages={9233},
  year={2025},
  publisher={MDPI}
}

@article{lucia2002electricity,
  title={Electricity prices and power derivatives: Evidence from the Nordic power exchange},
  author={Lucia, Julio J and Schwartz, Eduardo S},
  journal={Review of derivatives research},
  volume={5},
  number={1},
  pages={5--50},
  year={2002},
  publisher={Springer}
}

@article{cartea2005pricing,
  title={Pricing in electricity markets: a mean reverting jump diffusion model with seasonality},
  author={Cartea, Alvaro and Figueroa, Marcelo G},
  journal={Applied Mathematical Finance},
  volume={12},
  number={4},
  pages={313--335},
  year={2005},
  publisher={Taylor \& Francis}
}

@book{benth2008stochastic,
  title={Stochastic modelling of electricity and related markets},
  author={Benth, Fred Espen and Benth, Jurate Saltyte and Koekebakker, Steen},
  volume={11},
  year={2008},
  publisher={World Scientific}
}

@book{bertsekas2012dynamic,
  title={Dynamic programming and optimal control: Volume I},
  author={Bertsekas, Dimitri},
  volume={4},
  year={2012},
  publisher={Athena scientific}
}

@book{bertsekas1996stochastic,
  title={Stochastic optimal control: the discrete-time case},
  author={Bertsekas, Dimitri and Shreve, Steven E},
  volume={5},
  year={1996},
  publisher={Athena Scientific}
}

@book{bensoussan2011applications,
  title={Applications of variational inequalities in stochastic control},
  author={Bensoussan, Alain and Lions, J-L},
  volume={12},
  year={2011},
  publisher={Elsevier}
}

@article{baldursson1996irreversible,
  title={Irreversible investment and industry equilibrium},
  author={Baldursson, Fridrik M and Karatzas, Ioannis},
  journal={Finance and stochastics},
  volume={1},
  number={1},
  pages={69--89},
  year={1996},
  publisher={Springer}
}

@article{baldwin2017contagion,
  title={Contagion in cyber security attacks},
  author={Baldwin, Adrian and Gheyas, Iffat and Ioannidis, Christos and Pym, David and Williams, Julian},
  journal={Journal of the Operational Research Society},
  volume={68},
  number={7},
  pages={780--791},
  year={2017},
  publisher={Taylor \& Francis}
}

@article{bather1967sequential,
  title={Sequential decisions in the control of a space-ship (finite fuel)},
  author={Bather, John and Chernoff, Herman},
  journal={Journal of Applied Probability},
  volume={4},
  number={3},
  pages={584--604},
  year={1967},
  publisher={Cambridge University Press}
}

@article{bielecki2022construction,
  title={Construction and simulation of generalized multivariate Hawkes processes},
  author={Bielecki, Tomasz R and Jakubowski, Jacek and Niew{\k{e}}g{\l}owski, Mariusz},
  journal={Methodology and Computing in Applied Probability},
  volume={24},
  number={4},
  pages={2865--2896},
  year={2022},
  publisher={Springer}
}

@article{benevs1980some,
  title={Some solvable stochastic control problemst},
  author={Bene{\v{s}}, V{\'a}clav E and Shepp, Larry A and Witsenhausen, Hans S},
  journal={Stochastics: An International Journal of Probability and Stochastic Processes},
  volume={4},
  number={1},
  pages={39--83},
  year={1980},
  publisher={Taylor \& Francis}
}

@article{bensoussan2024stochastic,
  title={Stochastic control for diffusions with self-exciting jumps: An overview},
  author={Bensoussan, Alain and Chevalier-Roignant, Benoit},
  journal={Mathematical Control and Related Fields},
  volume={14},
  pages={1452--1476},
  year={2024},
  publisher={Mathematical Control and Related Fields}
}

@article{bremaud1996stability,
  title={Stability of nonlinear Hawkes processes},
  author={Br{\'e}maud, Pierre and Massouli{\'e}, Laurent},
  journal={The Annals of Probability},
  pages={1563--1588},
  year={1996},
  publisher={JSTOR}
}

@article{budhiraja2007convergent,
  title={Convergent numerical scheme for singular stochastic control with state constraints in a portfolio selection problem},
  author={Budhiraja, Amarjit and Ross, Kevin},
  journal={SIAM Journal on Control and Optimization},
  volume={45},
  number={6},
  pages={2169--2206},
  year={2007},
  publisher={SIAM}
}

@article{burdzy2009skorokhod,
  title={The Skorokhod problem in a time-dependent interval},
  author={Burdzy, Krzysztof and Kang, Weining and Ramanan, Kavita},
  journal={Stochastic processes and their applications},
  volume={119},
  number={2},
  pages={428--452},
  year={2009},
  publisher={Elsevier}
}

@article{cadenillas1994stochastic,
  title={The stochastic maximum principle for a singular control problem},
  author={Cadenillas, Abel and Haussmann, Ulrich G},
  journal={Stochastics: An International Journal of Probability and Stochastic Processes},
  volume={49},
  number={3-4},
  pages={211--237},
  year={1994},
  publisher={Taylor \& Francis}
}

@article{ceci2004mixed,
  title={Mixed optimal stopping and stochastic control problems with semicontinuous final reward for diffusion processes},
  author={Ceci, Claudia and Bassan, Bruno},
  journal={Stochastics and Stochastic Reports},
  volume={76},
  number={4},
  pages={323--337},
  year={2004},
  publisher={Taylor \& Francis}
}

@article{dassios2013exact,
  title={Exact simulation of Hawkes process with exponentially decaying intensity},
  author={Dassios, Angelos and Zhao, Hongbiao},
  year={2013},
  journal={Electronic Communications in Probability},
  doi={10.1214/ECP.v18-2717}
}

@article{dufour2004singular,
  title={Singular stochastic control problems},
  author={Dufour, Fran{\c{c}}ois and Miller, Boris},
  journal={SIAM journal on control and optimization},
  volume={43},
  number={2},
  pages={708--730},
  year={2004},
  publisher={SIAM}
}

@article{dumitrescu2021approximation,
  title={Approximation schemes for mixed optimal stopping and control problems with nonlinear expectations and jumps},
  author={Dumitrescu, Roxana and Reisinger, Christoph and Zhang, Yufei},
  journal={Applied Mathematics \& Optimization},
  volume={83},
  number={3},
  pages={1387--1429},
  year={2021},
  publisher={Springer}
}

@article{foschi2021measuring,
  title={Measuring discrepancies between poisson and exponential hawkes processes},
  author={Foschi, Rachele},
  journal={Methodology and Computing in Applied Probability},
  volume={23},
  number={1},
  pages={219--239},
  year={2021},
  publisher={Springer}
}

@article{guo2008connections,
  title={Connections between singular control and optimal switching},
  author={Guo, Xin and Tomecek, Pascal},
  journal={SIAM Journal on Control and Optimization},
  volume={47},
  number={1},
  pages={421--443},
  year={2008},
  publisher={SIAM}
}

@article{guo2009class,
  title={A class of singular control problems and the smooth fit principle},
  author={Guo, Xin and Tomecek, Pascal},
  journal={SIAM Journal on Control and Optimization},
  volume={47},
  number={6},
  pages={3076--3099},
  year={2009},
  publisher={SIAM}
}

@article{harrison1987brownian,
  title={Brownian models of open queueing networks with homogeneous customer populations},
  author={Harrison, J Michael and Williams, Ruth J},
  journal={Stochastics: An International Journal of Probability and Stochastic Processes},
  volume={22},
  number={2},
  pages={77--115},
  year={1987},
  publisher={Taylor \& Francis}
}

@article{haussmann1995singularI,
  title={Singular optimal stochastic controls I: Existence},
  author={Haussmann, Ulrich G and Suo, Wulin},
  journal={SIAM Journal on Control and Optimization},
  volume={33},
  number={3},
  pages={916--936},
  year={1995},
  publisher={SIAM}
}

@article{haussmann1995singularII,
  title={Singular optimal stochastic controls II: Dynamic programming},
  author={Haussmann, Ulrich G and Suo, Wulin},
  journal={SIAM Journal on Control and Optimization},
  volume={33},
  number={3},
  pages={937--959},
  year={1995},
  publisher={SIAM}
}

@article{hawkes1971spectra,
  title={Spectra of some self-exciting and mutually exciting point processes},
  author={Hawkes, Alan G},
  journal={Biometrika},
  volume={58},
  number={1},
  pages={83--90},
  year={1971},
  publisher={Oxford University Press}
}

@article{hemmati2022identification,
  title={Identification of cyber-attack/outage/fault in zero-energy building with load and energy management strategies},
  author={Hemmati, Reza and Faraji, Hossien},
  journal={Journal of Energy Storage},
  volume={50},
  pages={104290},
  year={2022},
  publisher={Elsevier}
}

@article{hening2019harvesting,
  title={Harvesting of interacting stochastic populations},
  author={Hening, Alexandru and Tran, Ky Quan and Phan, Tien Trong and Yin, George},
  journal={Journal of Mathematical Biology},
  volume={79},
  number={2},
  pages={533--570},
  year={2019},
  publisher={Springer}
}

@article{hillairet2024chaotic,
  title={On the chaotic expansion for counting processes},
  author={Hillairet, Caroline and R{\'e}veillac, Anthony},
  journal={Electronic Journal of Probability},
  volume={29},
  pages={1--33},
  year={2024},
  publisher={The Institute of Mathematical Statistics and the Bernoulli Society}
}

@article{howard1972risk,
  title={Risk-sensitive Markov decision processes},
  author={Howard, Ronald A and Matheson, James E},
  journal={Management science},
  volume={18},
  number={7},
  pages={356--369},
  year={1972},
  publisher={INFORMS}
}

@article{iglehart1970multiple,
  title={Multiple channel queues in heavy traffic. I},
  author={Iglehart, Donald L and Whitt, Ward},
  journal={Advances in Applied Probability},
  volume={2},
  number={1},
  pages={150--177},
  year={1970},
  publisher={Cambridge University Press}
}

@article{jin2013numerical,
  title={Numerical methods for optimal dividend payment and investment strategies of regime-switching jump diffusion models with capital injections},
  author={Jin, Zhuo and Yang, Hailiang and Yin, G George},
  journal={Automatica},
  volume={49},
  number={8},
  pages={2317--2329},
  year={2013},
  publisher={Elsevier}
}

@article{jusselin2023scaling,
  title={Scaling limit for stochastic control problems in population dynamics},
  author={Jusselin, Paul and Mastrolia, Thibaut},
  journal={Applied Mathematics \& Optimization},
  volume={88},
  number={1},
  pages={14},
  year={2023},
  publisher={Springer}
}

@article{karatzas1983class,
  title={A class of singular stochastic control problems},
  author={Karatzas, Ioannis},
  journal={Advances in Applied Probability},
  volume={15},
  number={2},
  pages={225--254},
  year={1983},
  publisher={Cambridge University Press}
}

@article{karatzas2000finite,
  title={Finite-fuel singular control with discretionary stopping},
  author={Karatzas, Ioannis and Ocone, Daniel and Wang, Hui and Zervos, Mihail},
  journal={Stochastics: An International Journal of Probability and Stochastic Processes},
  volume={71},
  number={1-2},
  pages={1--50},
  year={2000},
  publisher={Taylor \& Francis}
}

@article{karoui1988probabilistic,
  title={Probabilistic aspects of finite-fuel, reflected follower problems},
  author={Karoui, Nicole El and Karatzas, Ioannis},
  journal={Acta Applicandae Mathematica},
  volume={11},
  number={3},
  pages={223--258},
  year={1988},
  publisher={Springer}
}

@article{khabou2025markov,
  title={Markov approximation for controlled Hawkes Jump-Diffusions with general kernels},
  author={Khabou, Mahmoud and Talbi, Mehdi},
  journal={arXiv preprint arXiv:2507.11294},
  year={2025}
}

@article{koch2021optimal,
  title={Optimal installation of solar panels with price impact: a solvable singular stochastic control problem},
  author={Koch, Torben and Vargiolu, Tiziano},
  journal={SIAM Journal on Control and Optimization},
  volume={59},
  number={4},
  pages={3068--3095},
  year={2021},
  publisher={SIAM}
}

@article{kruk2007explicit,
  title={An explicit formula for the Skorokhod map on [0, a]},
  author={Kruk, Lukasz and Lehoczky, John and Ramanan, Kavita and Shreve, Steven},
  journal={The Annals of Probability},
  year={2007}
}

@article{kushner1990numerical,
  title={Numerical methods for stochastic control problems in continuous time},
  author={Kushner, Harold J},
  journal={SIAM Journal on Control and Optimization},
  volume={28},
  number={5},
  pages={999--1048},
  year={1990},
  publisher={SIAM}
}

@article{kushner1991numerical,
  title={Numerical methods for stochastic singular control problems},
  author={Kushner, Harold J and Martins, Luiz Felipe},
  journal={SIAM journal on control and optimization},
  volume={29},
  number={6},
  pages={1443--1475},
  year={1991},
  publisher={SIAM}
}

@article{lewis1976simulation,
  title={Simulation of nonhomogeneous Poisson processes with log linear rate function},
  author={Lewis, Peter AW and Shedler, GS},
  journal={Biometrika},
  volume={63},
  number={3},
  pages={501--505},
  year={1976},
  publisher={Oxford University Press}
}

@article{martins1990routing,
  title={Routing and singular control for queueing networks in heavy traffic},
  author={Martins, Luiz Felipe and Kushner, Harold J},
  journal={SIAM journal on control and optimization},
  volume={28},
  number={5},
  pages={1209--1233},
  year={1990},
  publisher={SIAM}
}

@article{ogata1981lewis,
  title={On Lewis' simulation method for point processes},
  author={Ogata, Yosihiko},
  journal={IEEE transactions on information theory},
  volume={27},
  number={1},
  pages={23--31},
  year={1981},
  publisher={IEEE}
}

@article{reppen2020optimal,
  title={Optimal dividend policies with random profitability},
  author={Reppen, A Max and Rochet, Jean-Charles and Soner, H Mete},
  journal={Mathematical Finance},
  volume={30},
  number={1},
  pages={228--259},
  year={2020},
  publisher={Wiley Online Library}
}

@article{shreve1994optimal,
  title={Optimal investment and consumption with transaction costs},
  author={Shreve, Steven E and Soner, H Mete},
  journal={The Annals of Applied Probability},
  pages={609--692},
  year={1994},
  publisher={JSTOR}
}

@article{stergiopoulos2020cyber,
  title={Cyber-attacks on the oil \& gas sector: A survey on incident assessment and attack patterns},
  author={Stergiopoulos, George and Gritzalis, Dimitris A and Limnaios, Evangelos},
  journal={Ieee Access},
  volume={8},
  pages={128440--128475},
  year={2020},
  publisher={IEEE}
}

@article{tran2016numerical,
  title={Numerical methods for optimal harvesting strategies in random environments under partial observations},
  author={Tran, Ky and Yin, George},
  journal={Automatica},
  volume={70},
  pages={74--85},
  year={2016},
  publisher={Elsevier}
}

@article{yildiz2024enhancing,
  title={Enhancing load frequency control and cybersecurity in renewable energy microgrids: A fuel cell-based solution with non-integer control under cyber-attack},
  author={Y{\i}ld{\i}z, S{\"u}leyman and Yildirim, Burak and {\"O}zdemir, Mahmut Temel},
  journal={International Journal of Hydrogen Energy},
  volume={75},
  pages={438--449},
  year={2024},
  publisher={Elsevier}
}

@book{applebaum2009levy,
  title={L{\'e}vy processes and stochastic calculus},
  author={Applebaum, David},
  year={2009},
  publisher={Cambridge university press}
}

@book{bauerle2011markov,
  title={Markov decision processes with applications to finance},
  author={B{\"a}uerle, Nicole and Rieder, Ulrich},
  year={2011},
  publisher={Springer Science \& Business Media}
}

@book{ethier2009markov,
  title={Markov processes: characterization and convergence},
  author={Ethier, Stewart N and Kurtz, Thomas G},
  year={2009},
  publisher={John Wiley \& Sons}
}

@book{feinberg2012handbook,
  title={Handbook of Markov decision processes: methods and applications},
  author={Feinberg, Eugene A and Shwartz, Adam},
  volume={40},
  year={2012},
  publisher={Springer Science \& Business Media}
}

@book{fleming2006controlled,
  title={Controlled Markov processes and viscosity solutions},
  author={Fleming, Wendell H and Soner, H Mete},
  year={2006},
  publisher={Springer}
}

@book{gripenberg1990volterra,
    address = {Cambridge},
    series = {Encyclopedia of {Mathematics} and its {Applications}},
    title = {Volterra {Integral} and {Functional} {Equations}},
    isbn = {0-521-37289-5},
    language = {en},
    number = {34},
    publisher = {Cambridge University Press},
    author = {Gripenberg, G. and Londen, S.-O. and Staffans, O.},
    year = {1990},
}

@book{ikeda2014stochastic,
  title={Stochastic differential equations and diffusion processes},
  author={Ikeda, Nobuyuki and Watanabe, Shinzo},
  volume={24},
  year={2014},
  publisher={Elsevier}
}

@book{kallenberg2021foundations,
	address = {Switzerland},
	edition = {3rd},
	series = {Probability {Theory} and {Stochastic} {Modelling}},
	title = {Foundations of {Modern} {Probability}},
	volume = {2},
	isbn = {978-3-030-61870-4},
	language = {English},
	publisher = {Springer Nature},
	author = {Kallenberg, Olav},
	year = {2021},
}

@book{kushner1992numerical,
	edition = {1},
	series = {Stochastic {Modelling} and {Applied} {Probability}},
	title = {Numerical {Methods} for {Stochastic} {Control} {Problems} in {Continuous} {Time}},
	isbn = {978-1-4684-0443-2},
	language = {English},
	number = {24},
	publisher = {Springer-Verlag},
	author = {Kushner, Harold J. and Dupuis, Paul G.},
	year = {1992},
}

@book{parthasarathy2005probability,
  title={Probability measures on metric spaces},
  author={Parthasarathy, Kalyanapuram Rangachari},
  volume={352},
  year={2005},
  publisher={American Mathematical Soc.}
}

@book{puterman2014markov,
  title={Markov decision processes: discrete stochastic dynamic programming},
  author={Puterman, Martin L},
  year={2014},
  publisher={John Wiley \& Sons}
}

@incollection{kunita2004stochastic,
  title={Stochastic differential equations based on L{\'e}vy processes and stochastic flows of diffeomorphisms},
  author={Kunita, Hiroshi},
  booktitle={Real and Stochastic Analysis: New Perspectives},
  pages={305--373},
  year={2004},
  publisher={Springer}
}

@incollection{shreve1988introduction,
  title={An introduction to singular stochastic control},
  author={Shreve, Steven E},
  booktitle={Stochastic differential systems, stochastic control theory and applications},
  pages={513--528},
  year={1988},
  publisher={Springer}
}

@phdthesis{sun2024optimal,
  title={Optimal Singular Control Problems and Quantum-Inspired Algorithms in Finance},
  author={Sun, Chuhao},
  year={2024},
  school={University of Michigan--Ann Arbor},
  doi={https://dx.doi.org/10.7302/23776}
}
%\nocite{*}

\end{document}